\documentclass[11pt,reqno]{amsart}
\usepackage[T1]{fontenc}
\usepackage[utf8]{inputenc}
\usepackage{lmodern,amsmath,amssymb,amsthm,mathtools,mathrsfs}
\usepackage[margin=1.08in]{geometry}
\usepackage{microtype,enumitem,graphicx,tikz}
\usepackage[hidelinks,bookmarksnumbered]{hyperref}
\allowdisplaybreaks[2]
\numberwithin{equation}{section}
\newtheorem{theorem}{Theorem}[section]
\newtheorem{proposition}[theorem]{Proposition}
\newtheorem{lemma}[theorem]{Lemma}
\newtheorem{corollary}[theorem]{Corollary}
\theoremstyle{definition}

\newtheorem{problem}[theorem]{Problem}
\newtheorem{example}[theorem]{Example}
\theoremstyle{remark}
\newtheorem{remark}[theorem]{Remark}
\newcommand{\C}{\mathbb C}
\newcommand{\R}{\mathbb R}

\newcommand{\Sh}{\mathcal K}
\newcommand{\AllSh}{\mathcal S}
\newcommand{\FixSh}{\mathcal K^{\mathrm{fix}}}
\newcommand{\Inc}{\mathcal I}
\newcommand{\dd}{\,\mathrm d}
\newcommand{\Ree}{\operatorname{Re}}
\newcommand{\Imm}{\operatorname{Im}}
\newcommand{\supp}{\operatorname{supp}}
\newcommand{\conv}{\operatorname{conv}}
\newcommand{\dist}{\operatorname{dist}}
\newcommand{\divz}{\operatorname{div}_0}
\newcommand{\Area}{\operatorname{Area}}
\newcommand{\Haus}{\mathcal H}
\newcommand{\ind}{\mathbf 1}
\newcommand{\Rexp}{\R_{\exp}}
\newcommand{\Blev}{\mathcal B}
\newcommand{\rootmu}{\nu}
\newcommand{\prz}{\operatorname{pr}_z}
\DeclareMathOperator{\wind}{wind}
\hypersetup{pdftitle={The full shadow of a polynomial II. Vanishing root masses and holes},
 pdfauthor={Christian Hagg and Boris Shapiro}}
\title[The full shadow of a polynomial II]{The full shadow of a polynomial II. Vanishing root masses and holes}
\author{Christian H\"agg}
\address{Department of Mathematics, Stockholm University, SE--106 91 Stockholm, Sweden}
\email{hagg@math.su.se}
\author{Boris Shapiro}
\address{Department of Mathematics, Stockholm University, SE--106 91 Stockholm, Sweden}
\email{shapiro@math.su.se}
\date{September 15, 2026}
\subjclass[2020]{Primary 30C15, 31A35; Secondary 03C64, 26C10}
\keywords{Polynomial derivatives, root clusters, vanishing masses, zero-set convergence, holes, finite topology}
\begin{document}
\begin{abstract}
For a complex polynomial $P$, its full shadow is the closure of the set
of zeros of all derivatives of all powers $P^n$, and its limiting core
is the Hausdorff limit, as $n\to\infty$, of the set of zeros of all
derivatives of $P^n$. These sets retain geometric information that weak
limits of root-counting measures discard. We determine complete
zero-set limits for arrays of polynomials whose roots approach finitely
many locations. A cluster containing $o(D)$ roots of a degree-$D$
polynomial can survive outside the limiting support of the derivative
zeros, which depends only on the positive limiting masses: under
repeated differentiation, the zeros it produces follow a straight path,
selected by an explicit min--max condition, and preserve the cluster's
exact multiplicity. When such a location instead carries a
small positive mass $\varepsilon$, the support of the limiting zero
distribution at a fixed ratio of derivative order to degree develops an
arc on the $\sqrt{\varepsilon}$ scale with a universal local semicircle
profile. We use these mechanisms to prove that polynomial
shadows can have robust and arbitrarily complicated topology. For every
$k\ge2$, there are polynomials with $k+2$ distinct root locations whose
limiting cores and full shadows have at least $\binom{k}{2}$ holes; this
persists under coefficient perturbations and among square-free
polynomials without a reflection axis. In a four-location family, the
core and full-shadow areas tend to zero while one hole has area tending
to $8/1221$.
\end{abstract}
\maketitle
\enlargethispage{2pt}
\setcounter{tocdepth}{1}
\begingroup\small\tableofcontents\endgroup

\section{Introduction}\label{sec:intro}
Repeated differentiation is often studied through normalized
root-counting measures. This is the natural scale for zeros carrying a
positive fraction of the degree, but it erases every cluster containing
$o(D)$ roots of a degree-$D$ polynomial. For questions about the set of
all zeros of all derivatives, that loss is decisive: a cluster of
vanishing normalized mass can travel a macroscopic distance under
differentiation, and several such trajectories can change the topology
of the set swept out by all derivative orders.

Polynomial shadows retain precisely this set-level information. Starting
from a fixed polynomial $P$, one considers every derivative of every
power $P^n$. The \emph{full shadow} of $P$ is the closure of the set of
all zeros of all these derivatives, and its \emph{limiting core} is the
Hausdorff limit, as $n\to\infty$, of the set of zeros of all derivatives
of $P^n$; both are defined in Section~\ref{sec:intro-objects}. The
companion paper \cite{HS1} showed that the full shadow is the disjoint
union of the core and a countable set of isolated \emph{transient
zeros}, each occurring at only finitely many power levels $n$, and that both
sets depend continuously on the coefficients of $P$
(Proposition~\ref{prop:shadow-foundations} below). The core is
connected, and \cite{HS1} leaves open whether it must be contractible.
The present paper develops a complete zero-set theory for \emph{arrays},
that is, sequences of polynomials $H_j$ of degrees $D_j\to\infty$, whose
roots approach finitely many locations. We use it to construct
polynomials whose cores and full shadows have arbitrarily many
\emph{holes} (bounded complementary components) that persist under
coefficient perturbations.

The central point is that, for an array, the weak limit of the
normalized root-counting measures does not determine the limit of the
zero sets of the derivatives; one must also retain the limit of the sets
of root locations. Locations of positive limiting mass produce the
limiting zero distributions of the derivatives and their supports. A
location with zero limiting mass may still launch a cluster of zeros of
the derivatives; we call the zeros of such a cluster its
\emph{descendant zeros}. This cluster
travels outside those supports along a straight transport path, on a
part of the path selected by an explicit condition, and it contains
exactly as many zeros as the original cluster. This mechanism is
invisible at the level of probability measures and is the source of the
topological phenomena below; see Figure~\ref{fig:trails}. (In
\cite{BHS,HS1} the derivatives $(P^n)^{(m)}$ themselves are called
descendants of $P$.)

\subsection{Zero sets, cores, and full shadows}\label{sec:intro-objects}
For a nonzero polynomial $H$, let $Z(H)$ be its set of distinct zeros
and let
\[
 \divz H=\sum_{a\in Z(H)}\operatorname{ord}_a(H)\delta_a
\]
be its zero divisor, viewed as a measure. Here $\operatorname{ord}_a(H)$
is the multiplicity of the zero and $\delta_a$ is unit mass at $a$.
For $D=\deg H\ge1$, the probability measure $D^{-1}\divz H$ is the
\emph{normalized root measure} of $H$, and we put
\begin{equation}\label{eq:finite-order-objects}
 \mathcal Z(H)=\bigcup_{m=0}^{D-1}Z(H^{(m)}),\qquad
 \pi_H=\frac{2}{D(D+1)}\sum_{m=0}^{D-1}\divz H^{(m)}.
\end{equation}
The measure $\pi_H$ is a probability measure, since the derivative
degrees sum to $D(D+1)/2$.

The distinction between measure and set limits appears already in the
elementary family $H_D(z)=z^{D-1}(z-1)$. Its normalized root measures
tend weakly to $\delta_0$, while
\[
 \mathcal Z(H_D)=\{0,1/D,2/D,\ldots,1\}\longrightarrow[0,1]
\]
in Hausdorff distance. Thus a single root of normalized mass $1/D$ can
leave a macroscopic trace through its descendant zeros. The general
theory below identifies this trace for arbitrary finite root
configurations.

For a nonconstant polynomial $P$, its \emph{full shadow} is
\begin{equation}\label{eq:full-shadow-definition}
 \AllSh_P=\overline{\bigcup_{n\ge1}\mathcal Z(P^n)}.
\end{equation}
The power $P^n$ is an ordinary product, not a functional iterate. We
also write $\mathcal Z_{P,n}=\mathcal Z(P^n)$. Hausdorff convergence
refers to the metric
\[
 d_{\mathrm H}(E,F)=\max\left\{
 \sup_{x\in E}\dist(x,F),\ \sup_{y\in F}\dist(y,E)\right\}
\]
on nonempty compact sets. We call Hausdorff convergence of such zero
sets \emph{complete zero-set convergence}; unlike weak convergence of
normalized zero divisors, it retains every zero location.

The companion paper proves the following structural facts. We state them
in the notation used here so that the present paper can be read without
assuming prior familiarity with the core--transient decomposition.
Convergence $P_j\to P$ of monic polynomials of a fixed degree means
convergence of their coefficients.

\begin{proposition}[Core convergence, stability, and transient zeros]
\label{prop:shadow-foundations}
\cite[Theorem~5.3, Corollary~5.4, Theorems~5.6--5.7, and
Corollaries~5.8 and~5.12]{HS1}
Let $P$ be a nonconstant polynomial. There is a nonempty compact set
$\Sh_P\subseteq\conv Z(P)$ such that
\begin{equation}\label{eq:core-definition}
 \mathcal Z_{P,n}\longrightarrow\Sh_P\qquad(n\to\infty).
\end{equation}
The set $\Sh_P$ is connected. If $P$ has at least two distinct zeros,
then $\Sh_P$ is perfect and
\[
 \operatorname{acc}\AllSh_P=\Sh_P=\operatorname{acc}\Sh_P,
\]
where $\operatorname{acc}E$ denotes the set of limits of sequences of
pairwise distinct points of $E$.

Let
\[
 T_P:=\AllSh_P\setminus\Sh_P.
\]
Then
\begin{equation}\label{eq:core-transients}
 \AllSh_P=\Sh_P\,\dot\cup\,T_P,
 \qquad
 T_P\subseteq\bigcup_{n\ge1}\mathcal Z_{P,n}.
\end{equation}
Every point of $T_P$ is isolated in $\AllSh_P$ and occurs at only
finitely many power levels $n$. The set $T_P$ is countable and locally
finite in $\C\setminus\Sh_P$. Consequently,
\begin{equation}\label{eq:shadow-interior-area}
 \operatorname{int}\AllSh_P=\operatorname{int}\Sh_P,
 \qquad
 \Area(\AllSh_P)=\Area(\Sh_P),
 \qquad
 \partial\AllSh_P=\partial\Sh_P\cup T_P.
\end{equation}
For every component $\Omega$ of $\C\setminus\Sh_P$, the set
$\Omega\setminus T_P$ is a component of
$\C\setminus\AllSh_P$. This correspondence is a bijection of
complementary components; it preserves boundedness and area, and
$\overline{\Omega\setminus T_P}=\overline\Omega$.
Thus the core and the full shadow have exactly the same holes.

The construction is stable in each fixed degree. If $P_j\to P$ are
monic of degree $d\ge2$, then
\begin{equation}\label{eq:varying-all-orders}
 d_{\mathrm H}(\mathcal Z_{P_j,j},\Sh_P)\longrightarrow0.
\end{equation}
The maps $P\mapsto\Sh_P$ and $P\mapsto\AllSh_P$ are
Hausdorff-continuous, including at root collisions. More precisely, for
every compact family $\mathscr P$ of monic degree-$d$ polynomials,
\begin{equation}\label{eq:uniform-all-orders}
 \sup_{P\in\mathscr P}
 d_{\mathrm H}(\mathcal Z_{P,n},\Sh_P)\longrightarrow0.
\end{equation}
\end{proposition}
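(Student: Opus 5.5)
This proposition is quoted from the companion paper \cite{HS1}. Within the present paper the proof is a translation of notation plus a few deductions. My plan is to take the cited statements as the primitive input: the existence of $\Sh_P\subseteq\conv Z(P)$ with \eqref{eq:core-definition}, connectedness, perfectness, and the characterization of accumulation points, all from \cite[Theorem~5.3, Corollary~5.4]{HS1}. The decomposition \eqref{eq:core-transients}, with transient points isolated and occurring at finitely many levels, comes from \cite[Theorems~5.6--5.7]{HS1}. Stability, \eqref{eq:varying-all-orders} and \eqref{eq:uniform-all-orders}, comes from \cite[Corollaries~5.8, 5.12]{HS1}. I then derive the remaining consequences directly, so that the reader sees which parts are genuinely new content of \cite{HS1} and which are formal.

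The formal deductions go as follows.

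\textbf{Decomposition and local finiteness.} Since $\mathcal Z_{P,n}\to\Sh_P$ in Hausdorff distance and each $\mathcal Z_{P,n}$ is finite, every limit of distinct points of $\bigcup_n\mathcal Z_{P,n}$ lies in $\Sh_P$. Hence $\AllSh_P=\Sh_P\cup\bigcup_n\mathcal Z_{P,n}$, which gives \eqref{eq:core-transients}. Fix a compact $K\subset\C\setminus\Sh_P$. For $n$ large, $\mathcal Z_{P,n}$ misses $K$ by Hausdorff convergence, so $K\cap T_P$ meets only finitely many finite sets. This gives local finiteness and countability, and shows each transient point is isolated and occurs at finitely many levels.

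\textbf{Interior, area, boundary.} A countable set has zero area and no interior. Moreover, a point of $\operatorname{int}\AllSh_P$ lying in $T_P$ would be isolated in $\AllSh_P$, which is impossible. Hence $\operatorname{int}\AllSh_P\subseteq\Sh_P$, and in fact $\operatorname{int}\AllSh_P=\operatorname{int}\Sh_P$, since removing a locally finite set from an open disc in $\AllSh_P$ keeps it within the closure $\Sh_P$. The boundary formula follows because every point of $T_P$ is a boundary point.

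\textbf{Complementary components.} For a component $\Omega$ of $\C\setminus\Sh_P$, the set $T_P\cap\Omega$ is closed in $\Omega$ and discrete. An open connected planar set minus a relatively closed discrete set stays connected and open, and its closure is unchanged. Boundedness and area are preserved for the same reasons, which yields the bijection of components and the equality of holes.

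The main obstacle, if one were to reprove rather than cite, is the analytic content. This consists of the existence of the Hausdorff limit $\Sh_P$, and the uniform statement \eqref{eq:uniform-all-orders} across root collisions, where the number of distinct roots drops. Here I would rely entirely on \cite{HS1}. The only check needed in the present paper is that the notation matches: $\mathcal Z(H)$ is the union over $m\le D-1$, and convergence is coefficientwise for monic polynomials of degree $d$.
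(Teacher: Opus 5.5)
Your proposal is correct and matches the paper, which gives no independent proof here and simply imports these facts from \cite{HS1}. Your derivations are sound: the transient decomposition, local finiteness, the interior/area/boundary identities and the component bijection all follow from the Hausdorff convergence $\mathcal Z_{P,n}\to\Sh_P$, and perfectness together with $\operatorname{acc}\AllSh_P=\Sh_P=\operatorname{acc}\Sh_P$ is equally formal, because $Z(P)\subseteq\mathcal Z_{P,n}$ puts at least two points into the connected set $\Sh_P$.
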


We call $\Sh_P$ the \emph{limiting core}, or simply the \emph{core}, of
$P$, the points of $T_P$ its \emph{transient zeros}, and $\Sh_P$ and
$\AllSh_P$ the two \emph{shadows} of $P$. The proposition shows that
transient zeros record genuine finite-level exceptions but do not affect
the interior, area, or holes of the full shadow.

Multiplication by a nonzero scalar changes none of our objects, so
polynomials may be taken monic. Write
\begin{equation}\label{eq:root-data}
 P(z)=\prod_{j=1}^s(z-a_j)^{r_j},\qquad
 d=\sum_{j=1}^s r_j,\qquad p_j=\frac{r_j}{d},
\end{equation}
where the $a_j$ are distinct. The normalized root measure and the
centroid of $P$ are
\[
 \rootmu_P=\frac1d\divz P=\sum_{j=1}^s p_j\delta_{a_j},
 \qquad \mathfrak c_P=\sum_{j=1}^s p_j a_j.
\]
Throughout the paper, $t$ is the derivative order divided by the
\emph{total degree}: $t=m/D$ for a degree-$D$ polynomial and
$t=m/(dn)$ for $P^n$. Thus the unnormalized ratio $m/n\in(0,d)$ used in
\cite{HS1} equals $dt$. Weak convergence of measures is written
$\Longrightarrow$. We use $\dd A$, $\Area$, and $\Haus^1$ for planar
Lebesgue measure, planar area, and one-dimensional Hausdorff measure,
respectively.

Consider an array $H_j$ of degrees $D_j\to\infty$ whose normalized root
measures converge weakly to a finitely supported probability measure
$\eta=\sum_{a\in A}p_a\delta_a$, all $p_a>0$, while the sets $Z(H_j)$
of distinct roots converge in Hausdorff distance to a finite set $B$.
Necessarily $A\subseteq B$, and the inclusion may be strict. The points
of $B\setminus A$ are the \emph{vanishing-mass locations}: each of them
attracts roots of $H_j$, but only a vanishing fraction of them. A main
conclusion of this paper is that complete zero-set limits of the
derivatives are governed by the pair $(\eta,B)$, not by $\eta$ alone.
If $m_j/D_j\to t\in(0,1)$, the normalized root measures of
$H_j^{(m_j)}$ converge to a probability measure $\mu_{t,\eta}$ that
depends on $\eta$ alone; its support $S_{t,\eta}$ is the
\emph{fixed-ratio support} (Proposition~\ref{prop:weighted-foundations}
and Theorem~\ref{thm:finite-cluster-universality}). For the powers of a
fixed polynomial $P$ these are the supports $S_{t,\rootmu_P}$, and the
core $\Sh_P$ is the closure of their union over $0<t<1$; as $t\to1$ they
shrink to the centroid $\mathfrak c_P$. For general $\eta$ we call this
closure $S_\eta$ the \emph{limiting core} of $\eta$.

\subsection{The geometric conclusions}
A \emph{hole} of a compact planar set is a bounded connected component
of its complement. A set is \emph{contractible} if its identity map can
be continuously deformed within the set to a constant map. The
foundational result above gives connectedness of the limiting core, but
it leaves open whether the core must be contractible. Our examples show
that it need not be: the holes occur in the connected core itself and do
not arise from isolated transient zeros.

We regard monic degree-$d$ polynomials as the coefficient space
$\C^d$, with its usual topology. A polynomial is \emph{square-free}
if all its zeros are simple. A \emph{reflection axis} is a line whose
reflection preserves the root multiset, including multiplicities.
Excluding multiple roots and reflection axes shows that the holes below
are not artifacts of multiplicity or symmetry.

\begin{theorem}[Robust holes]\label{thm:intro-holes}
For each $k\ge2$ and every sufficiently large integer $N$, the polynomial
\[
 H_{N,k}(z)=(z^2-1)^N\prod_{j=1}^k(z-j-2-i)
\]
has at least $\binom{k}{2}$ holes in both $\Sh_{H_{N,k}}$ and
$\AllSh_{H_{N,k}}$. This lower bound persists on an open coefficient
neighborhood, which contains square-free polynomials without a
reflection axis.

The four-location family
\[
 P_N(z)=(z^2-1)^N(z-2-i)(z-3-i)
\]
has a noncontractible core for every sufficiently large $N$. For either
shadow, the complementary component containing
$q_\diamond=2/3+i/30$ has area tending to $8/1221$, whereas
\[
 \Area(\Sh_{P_N})=\Area(\AllSh_{P_N})\longrightarrow0.
\]
\end{theorem}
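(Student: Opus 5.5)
The argument runs through the array theory announced in the introduction. The fixed-polynomial objects are compared with array limits via the stability statement in Proposition~\ref{prop:shadow-foundations}, and holes are controlled by Hausdorff convergence together with robust separation.

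\textbf{Step 1 (array limit).} Fix $k$ and regard $H_{N,k}$ as $N\to\infty$. Its normalized root measure tends to $\eta=\tfrac12\delta_{-1}+\tfrac12\delta_1$. The root set tends to
\[
B=\{\pm1\}\cup\{j+2+i:1\le j\le k\}.
\]
The $k$ points $j+2+i$ are therefore vanishing-mass locations, each carrying exactly one root. Note that $S_\eta=[-1,1]$, since the derivative zeros of $(z^2-1)^M$ stay on the segment and fill it.

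By the vanishing-mass theorem, the complete zero-set limit of $\mathcal Z(H_{N,k}^{n})$, taken jointly as $n\to\infty$ and then $N\to\infty$ (or along a diagonal), is
\[
[-1,1]\;\cup\;\bigcup_j \gamma_j .
\]
Here $\gamma_j$ is the straight transport path of the simple root at $b_j=j+2+i$. Under differentiation this root moves along a straight segment from $b_j$ toward the region where it is absorbed, namely a point of $[-1,1]$ selected by the min--max condition. The core is the closure of the union over $t$, so one expects $\Sh_{H_{N,k}}$ to be Hausdorff close to a union of $[-1,1]$ with $k$ segments $\gamma_j$ emanating from the $b_j$.

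\textbf{Step 2 (combinatorics of holes).} The $b_j$ lie on the horizontal line $\Imm z=1$, to the right of the segment. I would compute the endpoints $e_j\in[-1,1]$ of the $\gamma_j$ from the explicit min--max condition and show that the map $j\mapsto e_j$ reverses order, so that every pair of segments crosses transversally. One must also check that the surviving portions actually reach the crossing points. Then $k$ pairwise-crossing segments, attached at both ends to the connected set $[-1,1]$ (or to one another), bound at least $\binom k2$ bounded complementary regions. The count is by Euler characteristic: a planar arrangement with $\binom k2$ transverse crossings, attached to a tree-like base, creates one independent cycle per crossing.

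\textbf{Step 3 (passing to finite $N$ and robustness).} Hausdorff convergence alone does not preserve holes, because thin limit sets can close up or open. I therefore pick a point $q$ in each limit hole at positive distance $\delta$ from the limit set. Using \eqref{eq:varying-all-orders} and \eqref{eq:uniform-all-orders} uniformly on a compact coefficient neighborhood, for large $N$ and all nearby $P$ the core avoids $B(q,\delta/2)$. Separately, I need that $q$ is enclosed: the core must contain a closed curve winding around $q$. This I would derive from connectedness plus the fact that the core stays within a small neighbourhood of the limit set and meets every part of it. Closeness to a cycle together with connectedness does not by itself force enclosure, so this is the main obstacle. I would try to use the semicircle local profile and arc structure to show the core contains, near each $\gamma_j$, a connected continuum joining its endpoints; this is available from the curves traced by the cluster zeros as $t$ varies continuously. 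Holes of $\AllSh$ then follow from the bijection in Proposition~\ref{prop:shadow-foundations}. For generic perturbations, square-free polynomials with no reflection axis are dense, hence present in the open neighbourhood.

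\textbf{Step 4 (the four-location family).} For $P_N$ take $k=2$. The two paths from $2+i$ and $3+i$ cross at a point, and together with $[-1,1]$ they bound a triangle-like region containing $q_\diamond$. Its area is computed explicitly from the crossing geometry of the two segments and $[-1,1]$, and should equal $8/1221$. Hausdorff convergence of the cores to the limit set, which has zero area, gives $\Area(\Sh_{P_N})\to0$, since the core lies in a shrinking neighbourhood of a union of segments. Convergence of the hole's area follows because the hole is squeezed between inner and outer approximations of the limiting polygon.
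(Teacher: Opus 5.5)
Your Steps 1--2 and the exclusion half of Step 3 agree with the paper. The array limit is $K_{\eta_0,B}=[-1,1]\cup\bigcup_j[\beta_{x_j},x_j+i]$ with $\beta_x=2x/(x^2+2)$ decreasing, so the segments cross pairwise. Corollary~\ref{cor:cluster-entire-shadows}, together with fixed-degree continuity, keeps a chosen face point out of both shadows, robustly.

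The genuine gap is the step you flag yourself. For a \emph{fixed} large $N$ you never produce anything in the actual core that encloses the face point, and your proposed remedy would not do it.
\begin{itemize}
\item The strip/semicircle theorem gives continuous center curves in $S_{t,\eta_N}$ only for $t$ in a compact subinterval of $(0,\tau_x)$, with a threshold on $\varepsilon=1/(2N+k)$ that depends on that subinterval. It says nothing near the landing ratio, where the transported point meets the support, and nothing along the base.
\item That is exactly where a connected, Hausdorff-close approximant can have a tiny gap on one lower leg. The face point escapes through it, while connectivity is kept through the crossing and the other leg.
\item There are no ``curves traced by cluster zeros'' at fixed $N$. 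The core is $\overline{\bigcup_t S_{t,\eta_N}}$, and a continuous $t\mapsto\gamma(t)\in S_{t,\eta_N}$ through disconnected, nonconvex fibers has to be constructed.
\end{itemize}
The paper supplies the missing mechanism in five steps:
\begin{itemize}
\item Definability gives continuous sections (Lemma~\ref{lem:interval-selection}).
\item Nested convex hulls continue them to the common centroid (Lemma~\ref{lem:nested-support-hulls}).
\item The strict ordering of imaginary parts confines each section to a thin neighborhood of the tree $[\theta_x(u),\beta_x]\cup[-1,1]$ (Lemma~\ref{lem:spike-tracking}).
\item A winding argument on a rectangle of \emph{time} parameters forces two sections to meet near each crossing, even though they reach it at different ratios.
\item The loop closed at the centroid winds $-1$ about the face point; this is computed with logarithm branches on two opposite horizontal slit domains.
\end{itemize}
Robustness follows because only finitely many strict inequalities on a compact ratio interval enter, and perturbed support families still admit continuous sections.

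Two further claims in your proposal rest on that loop.
\begin{itemize}
\item \textbf{Distinct holes.} Counting $\binom k2$ bounded faces of the limit graph does not show that $\binom k2$ face points lie in \emph{distinct} complementary components of the finite-$N$ core. The count itself also needs the check that no three segment lines concur; the paper does this via the cubic $Yx^3-Xx^2+2x-2X=0$, and without it Euler's formula gives fewer faces. The paper gets distinctness from the winding matrix $(\wind(\mathcal L_{r,N},q_p))_{r,p}$. Ordering pairs by apex height makes it triangular with diagonal $-1$.
\item \textbf{Area limit in Step 4.} Let $\mathcal T_*$ be the open triangle with vertices $6/11$, $2/3$, $v_*$. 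The outer half of your squeeze says points outside $K\cup\overline{\mathcal T_*}$ eventually leave $\Omega_N$. This does not follow from Hausdorff convergence, since $\Omega_N$ could leak through such a gap. The paper proves it by showing the loop $\mathcal L_N$ has winding $0$ about such points but winding $-1$ about $q_\diamond$. The zero winding comes from a straight-line homotopy inside the star-convex $\varepsilon$-neighborhood of $\overline{\mathcal T_*}\cup[-1,1]$. The limiting value is $\frac12\cdot(\frac23-\frac6{11})\cdot\frac4{37}=\frac8{1221}$.
\end{itemize}
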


The number of root locations must grow with the number of holes: by
Theorem~\ref{thm:atomic-complexity}, the number of holes of the core of
$P$ is bounded in terms of the number of distinct roots of $P$.
Figure~\ref{fig:trails} shows the zeros of all derivatives of $P_N$ for
$N=200$, and Figure~\ref{fig:hole} shows the core of $P_N$ for
$N=1000$. The four-location example exhibits a particularly sharp
contrast: the areas of the core and the full shadow tend to zero, while
a complementary component retains a positive limiting area.

\begin{figure}[tp]
\centering
\includegraphics[width=\textwidth]{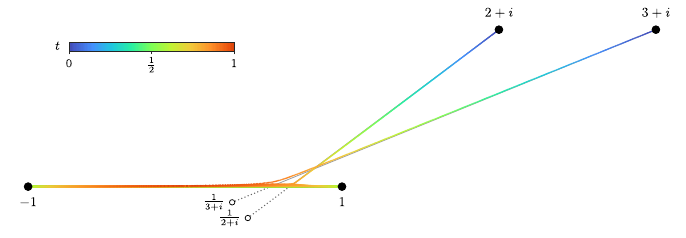}\\[1pt]
{\small (a)}\\[5pt]
\includegraphics[width=\textwidth]{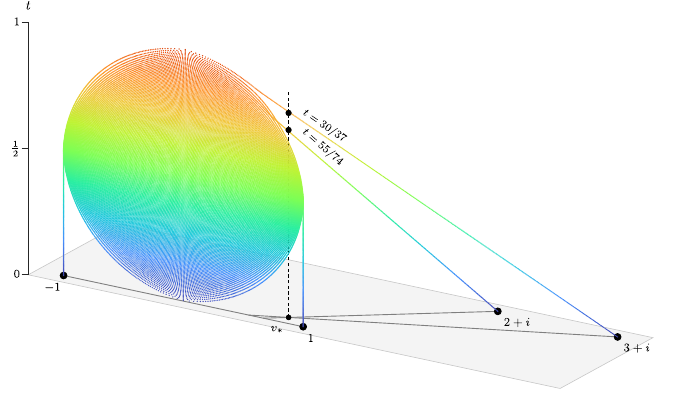}\\[1pt]
{\small (b)}
\caption{All zeros of all derivatives $P_N^{(m)}$, $0\le m<D=2N+2$, of
the polynomial $P_N$ of Theorem~\ref{thm:intro-holes} with $N=200$,
colored by $t=m/D$. (a) Each simple root $b=2+i,3+i$ sends one zero per
order along its transport segment $\theta_b(t)=(1-t)b+t/b$ (gray) until
it reaches $[-1,1]$; dotted: the continuations up to
$\theta_b(1)=1/b$, which are not selected. (b) The same zeros lifted to
height $t$. Over $[-1,1]$ they fill the ellipse
$x^2+4(t-\frac12)^2\le1$ swept out by the fixed-ratio supports
$[-2\sqrt{t(1-t)},2\sqrt{t(1-t)}]$, with the atoms $\pm1$ for
$t<\frac12$. The trails pass over the crossing $v_*=(30+4i)/37$ of
their projections (gray) at the different ratios $\frac{55}{74}$ and
$\frac{30}{37}$.}
\label{fig:trails}
\end{figure}

The geometric theorem is a consequence of a more general complete-limit
result for arrays. In the setting above, let
\[
 C_\eta(z)=\int\frac{\dd\eta(\xi)}{z-\xi}
\]
be the Cauchy transform of $\eta$, and let $m_j/D_j\to t\in(0,1)$.
Every limit of zeros of $H_j^{(m_j)}$ outside $S_{t,\eta}$, that is,
every limit of \emph{exterior} descendant zeros, is the point
$\theta_b(t)$, at the same ratio $t$, of a straight \emph{transport path}
\[
 \theta_b(t)=b-\frac{t}{C_\eta(b)},
\]
where $b\in B\setminus A$ and $C_\eta(b)\ne0$. Heuristically, Cauchy's
integral for $H_j^{(m_j)}(z)/m_j!$ has the limiting phase
$w\mapsto\sum_ap_a\log(w-a)-t\log(w-z)$, and $\theta_b(t)$ is the
observation point $z$ at which $b$ is a saddle point of this phase. The
real part of the phase is a landscape that tends to $+\infty$ at $z$ and
at infinity. Its \emph{barrier} is the min--max height of curves around
$z$, and the saddles at which the regions above the barrier around $z$
and around infinity meet are called \emph{connecting}, or
\emph{selected} (Figure~\ref{fig:selection}). When $b$ is the selected
saddle, the roots clustered near $b$ sit at the dominant saddle point of
the integral, and they reappear, one for one, as zeros near
$\theta_b(t)$. This barrier-selection condition determines which parts
of the transport paths actually occur; for $P_N$ they are the two
straight trails in Figure~\ref{fig:trails}(a). At every selected
exterior point, the descendant cluster contains exactly as many zeros as
the corresponding input cluster. The cluster size may tend to infinity,
provided it is $o(D_j)$; no separation, equal-multiplicity, or
simplicity assumption inside the cluster is required. Pooling all
derivative orders gives a compact set limit $K_{\eta,B}$ determined by
$(\eta,B)$; Figure~\ref{fig:cubic} shows an example whose limiting core
has interior points.

For the holes we view $(H_{N,k})_N$ as an array. Its normalized root
measures converge to $\eta_0=\frac12(\delta_{-1}+\delta_1)$, and its $k$
simple roots $x+i$, $x=3,\ldots,k+2$, are vanishing-mass locations. (For
each fixed $N$, the shadows of $H_{N,k}$ still involve all powers
$H_{N,k}^n$.) The selected transport paths are straight segments landing
on $[-1,1]$. As $x$ increases, the root moves to the right but its
landing point moves to the left, so the segments cross pairwise. Two transported
clusters never reach their crossing at a common ratio
(Figure~\ref{fig:trails}(b)), so the crossings appear only after all
derivative orders are pooled. They explain the limiting geometry, but
they do not by themselves prove that a fixed approximating core has a
hole: Hausdorff convergence to a graph with a cycle is compatible with
small gaps in every approximant. For each fixed polynomial we therefore
construct continuous paths $t\mapsto\gamma(t)$ in its fixed-ratio
supports, each following one transported cluster and ending at the
centroid. We force two such paths to intersect near each limiting
crossing and join them at the centroid into a loop in the core. The loop
winds nontrivially about a chosen point in a bounded face of the
limiting graph, and complete zero-set convergence keeps this point
outside the full shadow; hence it lies in a hole. Only finitely many
strict inequalities are used, so the conclusion persists under
coefficient perturbations.

A small \emph{positive} mass gives a complementary local picture. If a
location $b$ carries a small mass $\varepsilon$ instead of a vanishing
one, the fixed-ratio support contains, near each selected point
$\theta_b(t)$, an arc of size of order $\sqrt\varepsilon$; after
normalization, the distribution of mass along the arc converges to the
semicircle law. If these arcs are transverse to the transport path (an
explicit condition), then as the ratio varies they sweep out an open
\emph{support strip} containing the curve of their centers, which
therefore lies in the interior of the limiting core. This refinement is
not needed for the winding-number proof, but it explains the local
geometry visible in Figures~\ref{fig:hole} and~\ref{fig:small-mass}.

\subsection{Relation to earlier work and organization}
B\o gvad, H\"agg, and Shapiro \cite{BHS} study fixed-ratio asymptotics
for derivatives of polynomial powers through limiting potentials and
Cauchy transforms. The contour-selection and complete-convergence
framework for a fixed polynomial, together with the core--transient
decomposition, is developed in \cite{HS1}. Results imported from that
paper are restated in the normalization and notation needed here.
We cite their proofs where the arguments carry over, and explain the
extensions needed for real masses and varying degrees. The present
work begins where a purely measure-level description loses information:
at root locations whose normalized masses vanish.

Mart\'inez-Finkelshtein and Rakhmanov \cite{MFR} treat exterior
Cauchy-transform evolution for general arrays. Hall and Perales \cite{HP}
study deterministic arrays with asymptotically radial root distributions,
and Hall, Ho, Jalowy, and Kabluchko \cite{HHJK} study repeated
differentiation and related operators for random polynomials. Our
finite-location results retain information invisible to a limiting
probability measure: the exterior locations and exact multiplicities of
vanishing-mass clusters, together with the topology they generate after
all derivative orders are pooled.

Sections~\ref{sec:weighted}--\ref{sec:tame} also provide foundations
for the companion manuscripts. The real-weight and finite-location
theory, including topology and, at fixed spatial scale, boundary-length
bounds controlled by the number of atoms, is used in the measure decomposition, reflection
rigidity, and area-stability theory of~\cite{HS3}; the definable support
and incidence families also enter the analysis of caustics and core
boundaries in~\cite{HS4}.

Section~\ref{sec:weighted} introduces the barrier and the selected
saddles of the limiting landscape, constructs the limiting measures
$\mu_{t,\eta}$ for arbitrary positive masses, and establishes the
uniform analytic estimate used for varying-degree arrays.
Section~\ref{sec:clusters} proves the complete cluster limits and the
conservation law for zero counts when $m_j/D_j\to0$.
Section~\ref{sec:tame} establishes definability, finite topology, and
continuous paths through the family of fixed-ratio supports.
Section~\ref{sec:holes} gives the four-location and many-hole
constructions. Section~\ref{sec:small-atoms} develops the
small-positive-mass support strips and the semicircle profile.
Section~\ref{sec:hole-questions} concludes with sharp topology questions.

\section{Weighted barriers and uniform analytic estimates}
\label{sec:weighted}

We begin with a finite atomic probability measure. This separates the
limiting geometry from any choice of integer multiplicities. We define
the barrier of its landscape and the selected saddles, prove a uniform
saddle-point estimate for varying-degree arrays, and construct the
fixed-ratio limits $\mu_{t,\eta}$ together with the selected inverse.
The contour, potential-compactness, and small-order coefficient tools
from~\cite{HS1} are restated in the present normalization with references
to their proofs. The varying-degree saddle estimate and the construction
for arbitrary positive real masses are developed here.

\subsection{Potentials, saddles, and selection}
For a compactly supported finite measure $\nu$, our conventions are
\[
 U_\nu(z)=\int\log|z-\xi|\dd\nu(\xi),\qquad
 C_\nu(z)=\int\frac{\dd\nu(\xi)}{z-\xi},\qquad
 \Delta U_\nu=2\pi\nu.
\]
Off the support, $2\partial_zU_\nu=C_\nu$, where
$\partial_z=(\partial_x-i\partial_y)/2$ for $z=x+iy$.
Let
\begin{equation}\label{eq:weighted-data}
 \eta=\sum_{a\in A}p_a\delta_a,\qquad
 p_a>0,\qquad \sum_{a\in A}p_a=1,
\end{equation}
with $A\subset\C$ finite and nonempty, and put $s=|A|$. Write
\[
 C_\eta(w)=\sum_{a\in A}\frac{p_a}{w-a},\qquad
 \mathfrak c_\eta=\sum_{a\in A}p_a a,\qquad
 \mathfrak h(t)=-t\log t-(1-t)\log(1-t).
\]
The notation $Z(C_\eta)$ means the finite zeros of this rational
function, away from its poles.

For $z\notin A$ and $0<t<1$, define the real landscape
\begin{equation}\label{eq:weighted-landscape}
 L_{\eta,z,t}(w)=\sum_{a\in A}p_a\log|w-a|-t\log|w-z|.
\end{equation}
On a simply connected neighborhood avoiding $A\cup\{z\}$, choose
holomorphic logarithms and write
\begin{equation}\label{eq:weighted-phase}
 \Phi_{\eta,z,t}(w)=\sum_{a\in A}p_a\log(w-a)-t\log(w-z).
\end{equation}
Thus $L_{\eta,z,t}=\Ree\Phi_{\eta,z,t}$. The landscape tends to
$+\infty$ at $z$ and at infinity, with respective strengths $t$ and
$1-t$; these are its \emph{positive ends}. Its \emph{negative ends}
are the points of $A$, where it tends to $-\infty$.

An \emph{ordinary saddle} is a critical point of the phase away from
these ends. Set
\begin{equation}\label{eq:weighted-reduced-polynomials}
 Q_\eta(w)=\prod_{a\in A}(w-a),\qquad
 R_\eta(w)=\sum_{a\in A}p_a\prod_{a'\ne a}(w-a').
\end{equation}
Then $C_\eta=R_\eta/Q_\eta$, the two polynomials have no common zero,
and the saddle equation is
\begin{equation}\label{eq:weighted-saddle-equation}
 (w-z)R_\eta(w)-tQ_\eta(w)=0.
\end{equation}
It has degree $s$ and leading coefficient $1-t$. Equivalently, a
saddle satisfies $F_{t,\eta}(w)=z$, where the rational map
\begin{equation}\label{eq:weighted-F}
 F_{t,\eta}(w)=w-\frac{t}{C_\eta(w)}
\end{equation}
is understood after cancellation of common factors.

The \emph{critical height} of a saddle $w_*$ is $L_{\eta,z,t}(w_*)$.
With derivatives taken in $w$, its \emph{phase order} is the integer $\kappa\ge2$ characterized by
\[
 \Phi'(w_*)=\cdots=\Phi^{(\kappa-1)}(w_*)=0,
 \qquad \Phi^{(\kappa)}(w_*)\ne0.
\]
The saddle is \emph{simple} when $\kappa=2$. Its multiplicity in
\eqref{eq:weighted-saddle-equation} is $\kappa-1$, so the sum of
these multiplicities over any collection of saddles is at most $s$.
A local holomorphic coordinate reduces the phase to its critical value
plus a nonzero multiple of the $\kappa$th power of that coordinate.
The alternating regions in which its real part is larger or smaller
than the critical height are called the \emph{higher} and \emph{lower
sectors}. For a simple saddle one may choose a coordinate $v$ with
$\Phi-\Phi(w_*)=v^2/2$; we call $v$ a \emph{Morse coordinate}.
An observation point $z\notin A$ is \emph{regular} if all its ordinary
saddles are simple and their critical heights are pairwise distinct.

To specify which saddles contribute, work on the Riemann sphere
$\C\cup\{\infty\}$ and set
\[
 E_\ell(\eta,z,t)=
 \{w\notin A\cup\{z\}:L_{\eta,z,t}(w)\ge\ell\}
 \cup\{z,\infty\}.
\]
The \emph{barrier level} is
\begin{equation}\label{eq:weighted-barrier-definition}
 \Blev_\eta(z,t)=\sup\{\ell\in\R:
 z\text{ and }\infty\text{ belong to the same connected component of }
 E_\ell(\eta,z,t)\}.
\end{equation}
At this level, let $U_z$ and $U_\infty$ be the strict-superlevel
components containing the corresponding positive ends. A
\emph{connecting saddle}, or \emph{selected saddle}, is an ordinary
saddle lying in $\overline{U_z}\cap\overline{U_\infty}$. Selection is a
connectivity condition at the barrier level; equality of critical
heights alone does not imply it. Figure~\ref{fig:selection} shows these
objects for $\eta_0=\frac12(\delta_{-1}+\delta_1)$.

\begin{figure}[htbp]
\centering
\begin{minipage}[b]{0.4863\textwidth}\centering
 \includegraphics[width=\linewidth]{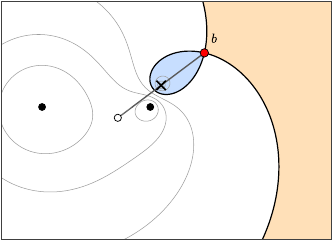}\\[1pt]
 {\small (a)}
\end{minipage}\hfill
\begin{minipage}[b]{0.4863\textwidth}\centering
 \includegraphics[width=\linewidth]{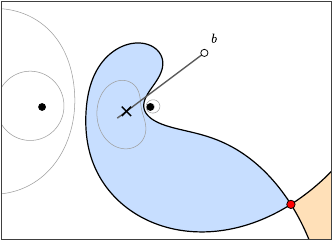}\\[1pt]
 {\small (b)}
\end{minipage}
\caption{The landscape
$L_{\eta_0,z,t}(w)=\frac12\log|w^2-1|-t\log|w-z|$ at the transported
point $z=\theta_b(t)$ (cross) of the vanishing-mass location $b=2+i$;
black disks: $\pm1$. Blue and orange: the components $U_z$ and
$U_\infty$ of the region above the barrier; black: the level curve
$L_{\eta_0,z,t}=\Blev_{\eta_0}(z,t)$; gray: further level lines. Red:
the connecting saddle; white: the other saddle. Gray segment: the
transport path $\theta_b$, dotted where $b$ is not selected.
(a)~$t=\frac12$: the connecting saddle is $b$. (b)~$t=\frac9{10}$, after
$\theta_b$ has reached the real axis at $t=\frac56$: the connecting
saddle is $9/b$, and $b$ lies below the barrier.}
\label{fig:selection}
\end{figure}

For a positively oriented piecewise smooth Jordan curve $\gamma$
avoiding $z$, put
\[
 \wind(\gamma,z)=\frac1{2\pi i}\oint_\gamma\frac{\dd w}{w-z}.
\]
A \emph{barrier contour} is such a curve, avoiding $A\cup\{z\}$,
winding once about $z$, passing once through every connecting saddle,
and lying strictly below the barrier elsewhere. A \emph{one-saddle
separating contour} is a barrier contour whose only connecting saddle is simple, with a
nondegenerate maximum there. On the complement of any
fixed small saddle neighborhood, its height is at most
$\Blev_\eta(z,t)-\delta$ for some $\delta>0$. This is the
\emph{strict height gap} used in the integral estimates.

\begin{proposition}[Contour facts for positive atomic weights]
\label{prop:weighted-contours}
\cite[Lemmas~2.1--2.2 and~2.4--2.9]{HS1}
The barrier is finite and continuous in the observation point, the
interior ratio, and the positive mass and distinct-location data.
It satisfies the min--max identity
\begin{equation}\label{eq:weighted-minmax}
 \Blev_\eta(z,t)=\inf_\gamma\max_{w\in\gamma}L_{\eta,z,t}(w),
\end{equation}
where $\gamma$ ranges over positively oriented piecewise smooth
Jordan curves avoiding $A\cup\{z\}$ and winding once about $z$.
There is a barrier contour through all connecting saddles, with no
restriction on their finite phase orders. Through each connecting
saddle there is also a path to the two positive ends whose unique
minimum is that saddle.

For fixed $\eta,t$, almost every observation point is regular. At a
regular point there is a locally persistent one-saddle separating
contour. The same conclusion holds at every point of a disk on which
$\Blev_\eta(\cdot,t)$ is harmonic, even if other, unselected saddles
are degenerate or have equal heights.
\end{proposition}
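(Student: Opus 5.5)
This proposition is cited from \cite{HS1}. The plan is to check that the proofs there, written for rational masses $p_j=r_j/d$, use only positivity of the weights and distinctness of the locations, and then to supply the continuity in the mass data. The argument is essentially topological on the Riemann sphere $\widehat\C$. The landscape $L=L_{\eta,z,t}$ is harmonic off $A\cup\{z\}$. It tends to $+\infty$ at the two positive ends $z,\infty$ and to $-\infty$ at the points of $A$. Its critical points are the roots of \eqref{eq:weighted-saddle-equation}, of which there are at most $s$.

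The steps, in order, are as follows.

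\textbf{Finiteness and the min--max identity.} Near $z$ and near $\infty$, $E_\ell$ contains small disks around the positive ends for every $\ell$, so the barrier is $>-\infty$. Any curve winding once about $z$ separates $z$ from $\infty$ and has a finite maximum, which gives the upper bound in \eqref{eq:weighted-minmax}. For the reverse inequality, fix $\ell>\Blev_\eta(z,t)$. The components of $E_\ell$ containing $z$ and $\infty$ are then distinct. A slight perturbation of the boundary of the component containing $z$, taken inside $\{L<\ell\}$ and regularized at the finitely many critical points, is a Jordan curve winding once about $z$ with maximum below $\ell$. Conversely, a curve with $\max_\gamma L<\ell$ separates $z$ from $\infty$ inside $E_\ell$.

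\textbf{Existence of connecting saddles and barrier contours.} At $\ell_0=\Blev_\eta(z,t)$ the closures of $U_z$ and $U_\infty$ meet. By the maximum principle, no component of $\{L>\ell_0\}$ is bounded away from the positive ends. If the meeting point were a regular point of $L$, the level set there would be a smooth arc with $\{L>\ell_0\}$ on only one side, which is impossible. Hence every meeting point is a critical point, that is, a connecting saddle of some finite order $\kappa$. To build a barrier contour, take the boundary curve of $U_z$ at level slightly below the barrier and push it down off the saddles. At each connecting saddle, the local normal form $\Phi-\Phi(w_*)=cv^\kappa$ lets the contour cross through the point while staying in the lower sectors. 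The path to the two positive ends through a connecting saddle is obtained by ascending from the saddle into the higher sector meeting $U_z$ and into the one meeting $U_\infty$, which are distinct sectors, and following gradient lines upward.

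\textbf{Continuity.} The data are $(z,t,p,A)$ with $p_a>0$ and the points of $A$ distinct. Along any such variation, $L$ converges locally uniformly on compacts away from $A\cup\{z\}$. The asymptotics at the ends are uniform, since $t$ stays in a compact subset of $(0,1)$ and the masses stay bounded below. Upper semicontinuity of the barrier follows from the min--max identity: a fixed near-optimal curve stays admissible. Lower semicontinuity follows from the superlevel-connectivity definition: a fixed path from $z$ to $\infty$ at height $\ge\ell-\epsilon$ persists after a small deformation near the moving $z$.

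\textbf{Regularity and local persistence.} The conditions that a saddle is degenerate, or that two saddle heights are equal, define a proper real-analytic subset of the $z$-plane, which has measure zero. The exception is when the defining function vanishes identically. That is excluded because $F_{t,\eta}$ is nonconstant, and because critical values are locally distinct holomorphic functions of $z$ whose real parts would otherwise coincide identically. At a regular point, the connecting saddle is unique and simple, and the Morse picture together with the strict height gap persists under small perturbations of $z$. The harmonic-disk case follows similarly: harmonicity of $\Blev_\eta(\cdot,t)$ on a disk means a single selected saddle branch $w_*(z)$ with $\Blev=\Ree\Phi(w_*(z))$ throughout the disk. That branch must be simple, and unselected saddles lie strictly below the barrier or off the relevant closure, so they play no role.

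I expect the main obstacle to be the last point. Showing that harmonicity forces a single simple selected saddle, even when unselected saddles degenerate, requires excluding two simultaneously connecting saddles at equal height on an open set. I would first argue that two connecting saddles at equal height on an open set force an identity between two distinct holomorphic critical-value branches, contradicting their distinct local behaviour near the poles of $F_{t,\eta}$.
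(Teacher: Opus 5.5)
Your proposal follows the paper's own proof. The paper likewise says that the cited arguments use only the positive end strengths $t$ and $1-t$, the positive masses, local holomorphic phases and finiteness of the reduced saddle equation. It then obtains joint continuity exactly as you do: an upper bound from a fixed nearly minimizing contour and a lower bound from the compact part of a path joining the positive ends, with the end pieces reattached uniformly.

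Your extra reconstructions of the cited lemmas are not needed, but two of them should be repaired.

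\emph{Finiteness from below.} $\Blev_\eta>-\infty$ holds because the sphere minus small disks about the finite set $A$ is connected. The fact that $E_\ell$ contains disks about the two ends does not give it.

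\emph{The harmonic-disk case.} The real issue is pointwise uniqueness and simplicity of the selected saddle, not equal heights on an open set. On such a disk $2\partial_z\Blev_\eta=t/(W(z)-z)$ with $W$ holomorphic and $F_{t,\eta}(W(z))=z$. Hence $F_{t,\eta}'(W)W'=1$, so $W$ is simple. Every critical value $z\mapsto\Phi_{\eta,z,t}(w(z))$ has derivative $t/(w(z)-z)$. So a second connecting saddle $w'$ at $z_0$ would, through its connecting path, bound $\Blev_\eta$ from below with first-order term $\Ree\bigl(t(z-z_0)/(w'-z_0)\bigr)$. That is impossible unless $w'=W(z_0)$. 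The same derivative identity disposes at once of equal heights on open sets, with no appeal to the poles of $F_{t,\eta}$.
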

\begin{proof}
The proofs of the cited lemmas apply to $L_{\eta,z,t}$ without an
integrality assumption on the logarithmic weights. They use only the
positive end strengths $t$ and $1-t$, the negative end strengths $p_a$,
local holomorphic phases, and finiteness of the reduced saddle equation
\eqref{eq:weighted-saddle-equation}. Thus the separation, min--max,
contour, regularity, and harmonic-barrier arguments carry over in the
normalization used here.

For joint continuity in the masses, use the two bounds in the proof of
\cite[Lemma~2.2]{HS1}. A nearly minimizing contour and the compact part
of a path joining the positive ends avoid all singularities, so the
landscape varies uniformly on them under small changes of $z$, $t$,
the positive masses, and the distinct locations. The truncated end
pieces can be reattached uniformly because the strengths $t$ and $1-t$
remain positive. The same upper contour bound and lower path bound
therefore give continuity in all the stated parameters.
\end{proof}

\subsection{A uniform estimate for varying-degree arrays}
The contours of Proposition~\ref{prop:weighted-contours} allow
connecting saddles of any finite phase order, whereas a Gaussian
asymptotic requires a unique selected saddle of phase order two. The
following lemma gives such an asymptotic for phases that vary with $j$.
It is used both for integer approximations of real weights
(Proposition~\ref{prop:weighted-foundations}) and for root arrays
(Section~\ref{sec:clusters}); two auxiliary lemmas follow.

\begin{lemma}[Uniform one-saddle transfer]\label{lem:uniform-transfer}
\cite[cf. Proposition~3.1]{HS1}
Let $H_j$ be monic polynomials of degrees $D_j\to\infty$. On fixed
neighborhoods of the simple saddles of the contours described below, assume
that local branches
\[
 V_j(w)=D_j^{-1}\log H_j(w)
\]
converge, after choices of additive imaginary constants, to a
holomorphic phase $V(w)$ with all local derivatives. On fixed
neighborhoods of the remaining contour pieces assume uniform convergence
of their real parts on compact subsets. The neighborhoods are root-free
for all sufficiently large $j$.

Suppose that, for parameters $(z,t)$ in a compact neighborhood in
$\C\times(0,1)$, the limiting phase
$V(w)-t\log(w-z)$ has a persistent one-saddle separating contour,
with saddle $W(z,t)$. Assume its saddle is uniformly nondegenerate and
its remaining contour pieces have a uniform strict height gap. Then
there are nearby simple saddles $W_j(z,t)$ and locally nonvanishing
amplitudes $\mathfrak g_j(z,t)$ such that, at every exact ratio
$t=m/D_j$ in a smaller compact parameter neighborhood,
\begin{equation}\label{eq:uniform-transfer}
 \frac{H_j^{(m)}(z)}{m!}
 =\frac{H_j(W_j(z,t))}{\sqrt{D_j}\,(W_j(z,t)-z)^m}
       \mathfrak g_j(z,t)\bigl(1+O(D_j^{-1})\bigr).
\end{equation}
The error is uniform in $z$ and in the exact ratio. The saddle and
amplitude are holomorphic in $z$ and real-analytic in $t$ on each
local parameter chart. With
\[
 C_j(w)=\frac{H_j'(w)}{D_jH_j(w)},\qquad
 F_{j,t}(w)=w-\frac{t}{C_j(w)},
\]
the oriented Gaussian amplitude satisfies
\begin{equation}\label{eq:transfer-amplitude}
 \mathfrak g_j(z,t)^2=
 \frac{1}{2\pi t F_{j,t}'(W_j(z,t))}.
\end{equation}
No rate of convergence of $V_j$ to $V$ is required.
\end{lemma}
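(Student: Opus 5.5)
We start from Cauchy's formula
\[
 \frac{H_j^{(m)}(z)}{m!}=\frac1{2\pi i}\oint_\gamma\frac{H_j(w)}{(w-z)^{m+1}}\dd w
 =\frac1{2\pi i}\oint_\gamma \exp\bigl(D_j\Psi_{j}(w)\bigr)\frac{\dd w}{w-z},
\]
with $\Psi_j(w)=V_j(w)-t\log(w-z)$ and $t=m/D_j$. The contour $\gamma$ is the persistent one-saddle separating contour of the limiting phase $\Psi=V-t\log(w-z)$, fixed for the whole compact parameter neighborhood. Since the contour neighborhoods are root-free for large $j$, the integrand is holomorphic there, so the contour may be used for every $j$. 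Near the saddle we deform it slightly, keeping it inside the fixed saddle neighborhood, so that it passes through the perturbed saddle.

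\emph{Step 1: perturbed saddles.} On the saddle neighborhood the $V_j$ converge with all derivatives. Hence $\Psi_j'$ converges to $\Psi'$ in $C^1$, uniformly in $(z,t)$. Since $\Psi''(W)\ne0$ uniformly, the implicit function theorem (or Rouché) gives a unique simple zero $W_j(z,t)$ of $\Psi_j'$ near $W(z,t)$. The equation is $\Psi_j'(w)=D_jC_j(w)-\dots$, that is, $C_j(w)-t/(w-z)=0$, or equivalently $F_{j,t}(w)=z$. Because it is holomorphic in $(w,z)$ and real-analytic in $t$, the root $W_j$ inherits this regularity. No convergence rate is needed here, only uniform $C^2$ closeness.

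\emph{Step 2: split the integral.} On the remaining contour pieces, uniform convergence of $\Ree V_j$ and the strict height gap $\delta$ bound the integrand by $\exp(D_j(\Blev-\delta/2))$. This is exponentially small relative to $|H_j(W_j)(W_j-z)^{-m}|=\exp(D_j\Ree\Psi_j(W_j))$, because $\Ree\Psi_j(W_j)\to\Blev$ uniformly. Near $W_j$ we introduce a Morse coordinate $v$ with $\Psi_j(w)-\Psi_j(W_j)=v^2/2$. This coordinate is defined on a disk of uniform size, since the $\Psi_j$ lie in a compact family of holomorphic functions with uniformly nondegenerate critical points. The contour piece maps to a path through $0$ along which $\Ree v^2\le0$ to second order, with a descent direction. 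A standard Laplace/Watson expansion then gives
\[
 \int e^{D_jv^2/2}g_j(v)\dd v=\sqrt{2\pi/D_j}\;e^{\pm i\pi/2}g_j(0)\bigl(1+O(D_j^{-1})\bigr),
\]
where $g_j(v)=(w-z)^{-1}\,\dd w/\dd v$. The odd term vanishes by symmetry. The $O(D_j^{-1})$ constant is controlled by uniform bounds on finitely many derivatives of $g_j$, and these follow from the convergence of $V_j$ with all local derivatives. This produces \eqref{eq:uniform-transfer} with $\mathfrak g_j=(2\pi i)^{-1}\sqrt{2\pi}\,(\dd w/\dd v)(0)\cdot(\text{orientation sign})$ and with the extra factor $(W_j-z)$ absorbed.

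\emph{Step 3: the amplitude formula.} We have $(\dd w/\dd v)(0)^2=1/\Psi_j''(W_j)$ and $\Psi_j''=D_jC_j'+t/(w-z)^2$. Differentiating the identity $F_{j,t}=w-t/C_j$ gives $F_{j,t}'=1+tC_j'/C_j^2$. At the saddle, $C_j=t/(W_j-z)$, so
\[
 \frac{\Psi_j''(W_j)}{D_j}=\frac{t}{(W_j-z)^2}\,F_{j,t}'(W_j).
\]
Collecting factors of $(W_j-z)$ and $2\pi i$ yields $\mathfrak g_j^2=1/(2\pi tF_{j,t}'(W_j))$, up to the orientation of the square root. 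Nonvanishing follows from uniform nondegeneracy.

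The main obstacle is uniformity. We must have a Morse chart and a Laplace remainder that are uniform over $j$, $z$, and the exact ratios $t=m/D_j$ simultaneously, using only qualitative convergence of $V_j$. The plan is to use a compactness argument: the family $\{\Psi_j\}$ together with its limit is precompact in $C^k$ on the saddle disk. All constants in the Morse lemma and in the Laplace remainder depend continuously on finitely many derivatives, so they are uniform. We also need the contour near the saddle to enter descent sectors of every $\Psi_j$. This comes from persistence of the nondegenerate maximum of $\Ree\Psi$ along the contour.
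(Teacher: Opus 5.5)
Your proposal is correct and follows essentially the paper's proof: implicit-function saddles $W_j$, a Morse coordinate for the exact moving phase on a uniform disk (so no rate on $V_j\to V$ is needed), exponential smallness off the saddle from half the height gap, a symmetric Gaussian passage, and the identity $(W_j-z)^2\Psi_j''(W_j)=tF_{j,t}'(W_j)$. Do delete the stray factor $D_j$ in your formulas, since $\Psi_j'=C_j-t/(w-z)$ and $\Psi_j''=C_j'+t/(w-z)^2$; the paper also replaces your single fixed contour by a finite cover of small parameter charts, and fixes the sign of the oriented amplitude consistently on overlaps using an exact ratio and the small relative error.
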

\begin{proof}
The implicit-function theorem gives $W_j\to W$ with derivatives on
smaller parameter neighborhoods. Uniform nondegeneracy provides Morse
coordinates for the \emph{moving} phases
\[
 \Phi_j(w,z,t)=V_j(w)-t\log(w-z)
\]
on a common smaller saddle disk. Replace the local contour passage by
the descending axis of this coordinate. The rest of the contour can
be joined to this passage within the fixed root-free neighborhoods.
Uniform convergence of real parts preserves, for example, half the
limiting height gap. After a finite parameter cover, the contour
lengths, analytic derivatives, inverse-coordinate derivatives, and
saddle nondegeneracy constants are uniformly bounded.

At $t=m/D_j$, Cauchy's formula is
\[
 \frac{H_j^{(m)}(z)}{m!}
 =\frac1{2\pi i}\oint
 \frac{H_j(w)}{(w-z)^{m+1}}\,\dd w.
\]
This kernel is single-valued. On the saddle chart, it has the exact
centered-phase expression
\[
 \frac{H_j(w)}{(w-z)^{m+1}}
 =\frac{H_j(W_j)}{(W_j-z)^m}
   \frac{e^{D_j(\Phi_j(w,z,t)-\Phi_j(W_j,z,t))}}{w-z}.
\]
The additive constants in the local logarithms cancel here. The
part outside the saddle disk is exponentially smaller than the saddle
contribution, uniformly in the parameters. Inside, the Morse
coordinate reduces the exponential to a Gaussian. Expand the analytic
amplitude on a symmetric local passage. The odd term integrates to
zero, and the next term has relative size $O(D_j^{-1})$; analytic
bounds control the remaining terms uniformly. The leading term is
nonzero because both $W_j-z$ and $\partial_w^2\Phi_j(W_j,z,t)$ are
nonzero. This proves \eqref{eq:uniform-transfer}.

At the saddle,
\[
 (W_j-z)^2\partial_w^2\Phi_j(W_j,z,t)
       =t F_{j,t}'(W_j).
\]
The factor $1/(2\pi i)$ and the orientation of the descending passage
give \eqref{eq:transfer-amplitude}; the contour fixes the local square
root, up to a constant sign. Analytic parameter dependence follows from
the same construction. Choose the physical parameter charts to be small
balls with closures inside the original charts. On their overlaps the
nearby saddle branches agree by local uniqueness, and hence their
Gaussian squares agree. For large $j$, an exact ratio in each nonempty
overlap fixes the sign through Cauchy's formula and the relative error;
continuity identifies the oriented amplitudes on the whole overlap.
A finite cover gives one threshold and one error constant. The exact moving phase remains in the leading
exponential throughout: it is never replaced there by its limit.
Consequently the proof imposes no condition on
$D_j\|V_j-V\|$ or on a discrepancy between $m$ and a prescribed
limiting ratio times $D_j$.
\end{proof}

The lemma is local in the observation parameter. It does not assert
that an arbitrary nonzero amplitude has zero winding on an annulus.
When exact cluster counts require that additional conclusion, we will
verify it from the nonvanishing limiting radicand in
\eqref{eq:transfer-amplitude}.

\begin{lemma}[Compactness of logarithmic potentials]
\label{lem:potential-compactness}
\cite[Lemma~3.2]{HS1}
If probability measures $\nu_j$ are supported in one compact set and
$\nu_j\Longrightarrow\nu$, then
$U_{\nu_j}\to U_\nu$ in $L^1_{\mathrm{loc}}(\C)$. Their potentials
are uniformly integrable on each bounded planar set. The same assertion
holds for any family with that common support whose closure is weakly
compact.
\end{lemma}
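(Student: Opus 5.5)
The plan is to split $U_{\nu_j}=U^{+}_{\nu_j}-U^{-}_{\nu_j}$, where $U^{\pm}$ are the positive and negative parts of $\log$. Fix a compact set $K$ containing all supports. On a bounded set $\Omega$, the variable $z-\xi$ then ranges over the bounded set $\Omega-K$. The positive part $\log^+|z-\xi|$ is therefore bounded by a constant $M$ uniformly in $j$. All the work lies in the logarithmic singularity.

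\emph{Uniform integrability.} By Tonelli, for any measurable $E\subseteq\Omega$,
\[
 \int_E|U_{\nu_j}|\dd A\le \int\!\!\int_E\bigl|\log|z-\xi|\bigr|\dd A(z)\dd\nu_j(\xi)
 \le \sup_{\xi\in K}\int_E\bigl|\log|z-\xi|\bigr|\dd A(z).
\]
I will bound the last integral, for every $\xi$, by its value when $E$ is replaced by the disk centered at $\xi$ of the same area. This is the symmetric-decreasing rearrangement bound for the kernel $-\log|z-\xi|$ near $\xi$, combined with $M\Area(E)$ for the bounded part. The disk integral tends to $0$ as $\Area(E)\to0$, and the bound depends on neither $j$ nor $\xi$. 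The same estimate with $E=\Omega$ gives uniform $L^1(\Omega)$ bounds. I expect this rearrangement step to be the only point needing care, and it is elementary.

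\emph{$L^1_{\mathrm{loc}}$ convergence.} Truncate the kernel: put $\log_\epsilon|u|=\max(\log|u|,\log\epsilon)$, which is bounded and continuous. Then $U^\epsilon_{\nu_j}(z)=\int\log_\epsilon|z-\xi|\dd\nu_j\to U^\epsilon_\nu(z)$ for each $z$ by weak convergence. The convergence is uniform on bounded sets, since the family $\{\log_\epsilon|z-\cdot|\}$ is equicontinuous on $K$. The error satisfies
\[
 \int_\Omega|U^\epsilon_{\nu_j}-U_{\nu_j}|\dd A\le\sup_{\xi\in K}\int_{|z-\xi|<\epsilon}\log(\epsilon/|z-\xi|)\dd A=\pi\epsilon^2/2,
\]
uniformly in $j$, and the same bound holds for $\nu$. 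An $\epsilon/3$ argument then gives $U_{\nu_j}\to U_\nu$ in $L^1(\Omega)$.

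\emph{The family version.} Both estimates were uniform over all probability measures supported in $K$, so they apply verbatim to the family. If the family's closure is weakly compact, every sequence in it has a weakly convergent subsequence. The limit is again supported in $K$, and the argument above applies along that subsequence, so the potentials form a relatively compact set in $L^1_{\mathrm{loc}}$.
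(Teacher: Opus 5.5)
Your proof is correct and takes the route the paper relies on. The lemma is imported from Lemma~3.2 of the companion paper, and the proof of Proposition~\ref{prop:weighted-foundations} says it comes from truncating the logarithmic kernel, which is your $\log_\epsilon$ argument; your equal-area disk bound for uniform integrability and the $\pi\epsilon^2/2$ truncation error, both uniform over measures supported in $K$, supply the details the paper leaves to that citation.
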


\begin{lemma}[Small-order coefficients]
\label{lem:small-order-coefficients}
\cite[Lemma~4.1]{HS1}
Let $\mathscr A_z(u)$ be holomorphic for $z$ near a compact set $X$
and $|u|<r$. Suppose that $|\mathscr A_z(0)|$ and
\[
 |\mathfrak b_z|,\qquad
 \mathfrak b_z:=\frac{\partial_u\mathscr A_z(0)}{\mathscr A_z(0)},
\]
are bounded below by positive constants on $X$. If $k_n\ge1$ and
$k_n/n\to0$, then
\[
 [u^{k_n}]\mathscr A_z(u)^n\ne0\qquad(z\in X)
\]
for all sufficiently large $n$. The same conclusion holds for varying
functions $\mathscr A_{n,z}$ that are holomorphic and uniformly bounded
on a common neighborhood of $X\times\{0\}$, provided both lower bounds
are uniform in $n$. On a smaller parameter neighborhood,
\begin{equation}\label{eq:small-coeff-logder}
 \partial_z\log\bigl([u^{k_n}]\mathscr A_{n,z}(u)^n\bigr)
 =n\,\partial_z\log\mathscr A_{n,z}(0)+O(k_n)
\end{equation}
holds uniformly. Here $\partial_z\log f$ means $f'/f$, so no global
logarithm is required; for $k_n=0$ the identity is exact.

\end{lemma}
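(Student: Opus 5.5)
The plan is to prove the nonvanishing from a saddle-point (Cauchy) estimate in the $u$-variable. Set $G_{n,z}(u)=\mathscr A_{n,z}(u)/\mathscr A_{n,z}(0)$, so that $G_{n,z}(0)=1$ and $G_{n,z}'(0)=\mathfrak b_z$ (depending on $n$ in the varying case). Then
\[
 [u^{k_n}]\mathscr A_{n,z}(u)^n=\mathscr A_{n,z}(0)^n\,[u^{k_n}]G_{n,z}(u)^n ,
\]
and it suffices to show that $c_n(z):=[u^{k_n}]G_{n,z}(u)^n$ is nonzero and satisfies $\partial_z\log c_n=O(k_n)$. Write
\[
 c_n(z)=\frac1{2\pi i}\oint_{|u|=\rho}\exp\bigl(n\log G_{n,z}(u)\bigr)u^{-k_n-1}\dd u ,
\]
where $\log G$ is the branch with $\log G(0)=0$. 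This branch is defined on a fixed small disk $|u|<r_0$, uniformly in $n$ and in $z$ near $X$. That uniformity follows from the uniform boundedness of $\mathscr A_{n,z}$ and the uniform lower bound on $|\mathscr A_{n,z}(0)|$, via Cauchy estimates for $\partial_u\mathscr A$.

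The phase is $\psi(u)=\log G(u)-(k_n/n)\log u=\mathfrak b u+O(u^2)-\lambda_n\log u$ with $\lambda_n=k_n/n\to0$. Its relevant saddle is $u_*\approx\lambda_n/\mathfrak b$, which is small because $|\mathfrak b|$ is bounded below, and it is simple with $\psi''(u_*)\approx\lambda_n/u_*^2\approx\mathfrak b^2/\lambda_n$. Take the circle through $u_*$, of radius $\rho=|u_*|$, rotated to pass through $u_*$ in the steepest direction. Along it, $\Ree(n\psi)$ is $n\Ree(\mathfrak b u)-k_n\log\rho+O(n\rho^2)$, and the term $n\Ree(\mathfrak b u)$ is maximized at $u_*$ with a quadratic drop in the angle of size $n\rho|\mathfrak b|\approx k_n$. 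Two cases arise.
\begin{itemize}
\item[(a)] When $k_n\to\infty$, the Gaussian approximation around $u_*$ with width $\sim k_n^{-1/2}$ in angle gives
\[
 c_n\sim\frac{e^{n\psi(u_*)}}{\sqrt{2\pi k_n}}\bigl(1+o(1)\bigr),
\]
which is nonzero. The error terms $O(n\rho^2)=O(k_n^2/n)$ are handled because they are smooth perturbations and do not need to be small in absolute terms: the method keeps the exact phase, as in Lemma~\ref{lem:uniform-transfer}, and only needs the relative cubic corrections $nu_*^3\psi'''$ to be small in the Gaussian window, which holds since they are of size $k_n^{-1/2}$.
\item[(b)] When $k_n$ stays bounded along a subsequence, use the exact expansion instead: $c_n=\frac{(n\mathfrak b)^{k}}{k!}(1+O(k^2/n))$ by multinomial counting, again nonzero.
\end{itemize}
A cleaner unified route is to write $G^n=e^{n\mathfrak b u}\,e^{n h(u)}$ with $h=O(u^2)$ and compare with the pure exponential. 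This is the Poisson-type coefficient $(n\mathfrak b)^{k}/k!$, and the correction is controlled on the saddle circle.

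For the logarithmic derivative, differentiate the contour integral in $z$. Since $\partial_z\log G_{n,z}(u)=O(|u|)$ uniformly (it vanishes at $u=0$), we get
\[
 \partial_z c_n=\frac{1}{2\pi i}\oint n\,\partial_z\log G\cdot(\dots)\dd u ,
\]
where the extra factor is $O(n\rho)=O(k_n)$ on the dominant saddle window. The Laplace estimate therefore gives $\partial_z c_n/c_n=O(k_n)$ when $k_n\ge1$. Combining this with $\partial_z\log\mathscr A(0)^n=n\,\partial_z\log\mathscr A_{n,z}(0)$ gives \eqref{eq:small-coeff-logder}. Uniformity on a smaller neighborhood of $X$ comes from Cauchy estimates in $z$. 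For $k_n=0$ the coefficient is exactly $\mathscr A(0)^n$, so the formula holds exactly.

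The main obstacle is the uniform saddle analysis in the intermediate regime, where $k_n\to\infty$ but $k_n/n\to0$ at an arbitrary rate. There one must show that the quadratic remainder $nh(u)$ does not destroy dominance of the saddle on the whole circle. My plan is to use that on $|u|=\rho$ the angular drop $n\rho|\mathfrak b|(1-\cos\phi)$ dominates the remainder $O(n\rho^2)$ because $\rho\to0$. This strict height gap, uniform in $z$ and $n$, is what gives the estimate.
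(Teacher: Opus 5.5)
The paper does not prove this lemma; it imports it from \cite[Lemma~4.1]{HS1}, so there is no in-paper argument to compare with. Judged on its own, your main line is sound and is the natural route:
\begin{itemize}
\item divide out $\mathscr A_{n,z}(0)^n$;
\item integrate over the circle through the exact saddle $u_*=\lambda_n/\mathfrak b+O(\lambda_n^2)$ of $g-\lambda_n\log u$, where $g=\log G$;
\item keep the exact phase in the exponent, as in Lemma~\ref{lem:uniform-transfer}, now with large parameter $k_n$;
\item treat bounded $k_n$ by a direct limit and conclude by a subsequence argument;
\item obtain \eqref{eq:small-coeff-logder} from $\partial_z\log G_{n,z}(u)=O(|u|)$.
\end{itemize}
After the substitution $u=\lambda_nv$, the phase $\lambda_n^{-1}g(\lambda_nv)-\log v$ is $O(\lambda_n)$-close, with all derivatives, to $\mathfrak bv-\log v$. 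On the circle $|v|=|\mathfrak b|^{-1}$ the latter has a unique nondegenerate maximum at $1/\mathfrak b$; this is why no rate on $k_n/n$ is needed. Two statements in your write-up must be corrected, however.

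\emph{The unified Poisson comparison fails.} Your ``cleaner unified route'' breaks down whenever $k_n^2/n\not\to0$. On the saddle circle, $nh(u)$ has size of order $k_n^2/n$, so the coefficient is not comparable to $(n\mathfrak b)^{k_n}/k_n!$. For instance, for $G=e^{u+u^2}$,
\[
 [u^k]G^n=\frac{n^k}{k!}\sum_{0\le j\le k/2}\frac{k!}{j!\,(k-2j)!\,n^j},
\]
and the $j=1$ term alone is $k(k-1)/n$. Only the exact-phase argument covers every sublinear rate.

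\emph{The height gap is not correct as stated.} In your resolution of the main obstacle, the absolute remainder $O(n\rho^2)=O(k_n\lambda_n)$ exceeds the drop $k_n(1-\cos\phi)$ for $|\phi|\lesssim\lambda_n^{1/2}$. When $k_n\lambda_n\to\infty$, this range is not contained in the Gaussian window $|\phi|\lesssim k_n^{-1/2}$. What you need is that the remainder vanishes to second order at the exact saddle. Put $F(\alpha)=n\,g(u_*e^{i\alpha})-k_n\log(u_*e^{i\alpha})$, so that $c_n=\frac1{2\pi}\int_{-\pi}^{\pi}e^{F(\alpha)}\dd\alpha$. Then $F'(0)=0$ and, uniformly on $[-\pi,\pi]$,
\[
 F''(\alpha)=-n\bigl(g''(u)u^2+g'(u)u\bigr)\big|_{u=u_*e^{i\alpha}}
 =-k_ne^{i\alpha}+O(k_n\lambda_n).
\]
Hence
\[
 \Ree\bigl(F(\alpha)-F(0)\bigr)=-k_n(1-\cos\alpha)+O(k_n\lambda_n\alpha^2)\le-c\,k_n\alpha^2 .
\]
This one inequality yields three things:
\begin{itemize}
\item the Gaussian asymptotics for $k_n\to\infty$;
\item the lower bound $|c_n|\ge c\,|e^{F(0)}|k_n^{-1/2}$, where the bounded case follows from $c_ne^{-F(0)}\to e^{-k}k^k/k!$ along subsequences with $k_n=k$ constant;
\item the bound $\int|e^{F}|\dd\alpha\le C|e^{F(0)}|k_n^{-1/2}$.
\end{itemize}
The last bound is exactly what your logarithmic-derivative step needs. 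The factor $n\,\partial_z\log G=O(n\rho)=O(k_n)$ holds on the whole circle, not only in the window, so $\partial_zc_n/c_n=O(k_n)$ follows from this absolute bound and the lower bound on $|c_n|$.
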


\subsection{The limiting measures and their selected inverse}
Write $(u)_+=\max\{u,0\}$. The next proposition constructs all the
weighted objects used in Theorem~\ref{thm:finite-cluster-universality}.

\begin{proposition}[Shadows of a finite atomic measure]
\label{prop:weighted-foundations}
For each $0<t<1$ there is a unique probability measure $\mu_{t,\eta}$
whose logarithmic potential satisfies
\begin{equation}\label{eq:weighted-potential}
 U_{\mu_{t,\eta}}(z)=
 \frac{\Blev_\eta(z,t)-\mathfrak h(t)}{1-t},\qquad z\notin A.
\end{equation}
These measures are weakly continuous in $t$. Their fixed-ratio
supports, pooled measure, and pooled support are
\begin{equation}\label{eq:weighted-shadows}
 \begin{split}
 S_{t,\eta}&=\supp\mu_{t,\eta},\qquad
 \rho_\eta=2\int_0^1(1-t)\mu_{t,\eta}\dd t,\\
 S_\eta&=\supp\rho_\eta
       =\overline{\bigcup_{0<t<1}S_{t,\eta}}.
 \end{split}
\end{equation}
All these supports are compact subsets of $\conv A$. The atomic parts
are exactly
\begin{equation}\label{eq:weighted-atoms}
 \mu_{t,\eta}^{\mathrm{at}}
   =\sum_{a\in A}\frac{(p_a-t)_+}{1-t}\delta_a,
 \qquad
 \rho_\eta^{\mathrm{at}}=\sum_{a\in A}p_a^2\delta_a.
\end{equation}
Every $a\in A$ lies in $S_{t,\eta}$ for $0<t\le p_a$, with $t<1$.
The supports are Hausdorff-continuous in $t$, and
\begin{equation}\label{eq:weighted-endpoints}
 S_{t,\eta}\longrightarrow A\cup Z(C_\eta)\quad(t\downarrow0),
 \qquad
 S_{t,\eta}\longrightarrow\{\mathfrak c_\eta\}\quad(t\uparrow1).
\end{equation}

On $\C\setminus S_{t,\eta}$, the Cauchy transform $C_{\mu_{t,\eta}}$
does not vanish.
The \emph{selected inverse} is the holomorphic function
\begin{equation}\label{eq:weighted-W}
 W_{t,\eta}(z)=z+\frac{t}{(1-t)C_{\mu_{t,\eta}}(z)}.
\end{equation}
It is locally real-analytic in $t$ and satisfies
\begin{equation}\label{eq:weighted-inverse}
 \begin{split}
 F_{t,\eta}(W_{t,\eta}(z))&=z,\\
 C_\eta(W_{t,\eta}(z))&=(1-t)C_{\mu_{t,\eta}}(z),\\
 F_{t,\eta}'(W_{t,\eta}(z))&\ne0.
 \end{split}
\end{equation}
At a nonroot observation point it is the unique selected saddle. If
$a\in A\setminus S_{t,\eta}$, then $t>p_a$; the inverse identities
extend holomorphically across the observation point $z=a$, and there is
a locally persistent one-saddle separating contour there.

For polynomial root data, put
$\FixSh_{t,P}=S_{t,\rootmu_P}$ and $\rho_P=\rho_{\rootmu_P}$.
Then
\begin{equation}\label{eq:polynomial-weighted-identification}
 S_{\rootmu_P}=\Sh_P,
 \qquad
 F_{t,\rootmu_P}(w)=w-dt\frac{P(w)}{P'(w)}.
\end{equation}
Moreover, $\mu_{t,\rootmu_P}$ is the fixed-ratio limit for derivatives of
$P^n$ with $m/(dn)\to t$. Accordingly, we call $S_\eta$ the
\emph{limiting core} of $\eta$.
\end{proposition}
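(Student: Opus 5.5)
The plan is to reduce everything to the integer-mass case treated in \cite{HS1} by rational approximation of the masses, and then pass to limits using the continuity of the barrier (Proposition~\ref{prop:weighted-contours}) and the uniform transfer estimate (Lemma~\ref{lem:uniform-transfer}).

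\textbf{Step 1: construct the measures.} Choose rational masses $p^{(j)}_a\to p_a$ with a common denominator $d_j\to\infty$. Set $P_j=\prod_a(z-a)^{d_jp^{(j)}_a}$ and pick derivative orders $m$ with $m/(d_jn)\to t$. The potential of the normalized root measure of $(P_j^n)^{(m)}$ converges in $L^1_{\mathrm{loc}}$ off $A$ to $(\Blev_\eta(z,t)-\mathfrak h(t))/(1-t)$.
\begin{itemize}
\item At regular points, this follows from Lemma~\ref{lem:uniform-transfer} applied to the array $H=P_j^{n}$. The phases $V_j=\sum p_a^{(j)}\log(w-a)$ converge to $\sum p_a\log(w-a)$ with all derivatives.
\item The leading exponential is $\Ree\Phi$ at the selected saddle, which equals the barrier. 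The factor $1/m!$ together with the normalization by $D-m$ produces $\mathfrak h(t)$ via Stirling.
\item Regular points have full measure (Proposition~\ref{prop:weighted-contours}), and Lemma~\ref{lem:potential-compactness} upgrades pointwise convergence on them to $L^1_{\mathrm{loc}}$ convergence.
\end{itemize}
Any weak subsequential limit therefore has the stated potential off the finite set $A$. Since a finite set is polar, the potential determines the measure, which gives existence and uniqueness of $\mu_{t,\eta}$. Weak continuity in $t$ follows from continuity of $\Blev_\eta$ in $t$, again via uniqueness of potentials.

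The same argument with $\eta=\rootmu_P$ and the actual powers $P^n$ identifies $\mu_{t,\rootmu_P}$ as the fixed-ratio limit. Combined with the core definition in \cite{HS1}, this gives $S_{\rootmu_P}=\Sh_P$. The formula $F_{t,\rootmu_P}(w)=w-dtP/P'$ is immediate from $C_{\rootmu_P}=P'/(dP)$.

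\textbf{Step 2: derive the inverse identities.} Off $S_{t,\eta}$ the potential is harmonic, so $\Blev_\eta(\cdot,t)$ is harmonic there. Proposition~\ref{prop:weighted-contours} then supplies a persistent one-saddle separating contour with selected saddle $W(z)$. Since $\Blev=\Ree\Phi(W(z))$ and $W$ is a critical point of $\Phi$, differentiating in $z$ gives
\[
2\partial_z\Blev=\frac{t}{W-z}.
\]
Comparing with $(1-t)C_{\mu_{t,\eta}}=2\partial_z\Blev$ yields the formula for $W_{t,\eta}$ and shows that $C_{\mu_{t,\eta}}$ does not vanish. The saddle equation gives $F_{t,\eta}(W)=z$ and hence $C_\eta(W)=t/(W-z)=(1-t)C_{\mu}$. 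Nondegeneracy of the selected saddle is equivalent to $F'_{t,\eta}(W)\ne0$.

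\textbf{Step 3: atoms, supports, and endpoint limits.} The atom at $a$ is read off from the logarithmic singularity of the barrier as $z\to a$. The saddle near $a$ collides with $z$, the local strength is $p_a-t$, and this gives mass $(p_a-t)_+/(1-t)$. Integrating against $2(1-t)\,dt$ gives $\rho^{\mathrm{at}}_\eta$; note that $\int_0^{p_a}2(p_a-t)\,dt=p_a^2$. The remaining facts follow from the same inputs:
\begin{itemize}
\item \emph{Inclusion in $\conv A$:} outside $\conv A$, the function $\Blev$ is harmonic by a Gauss--Lucas type argument from \cite{HS1}.
\item \emph{Hausdorff continuity:} this comes from the continuity of $\Blev$.
\item \emph{Limits $t\downarrow0$ and $t\uparrow1$:} these follow by tracking the saddle equation. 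As $t\to0$ the saddles tend to $A\cup Z(C_\eta)$; as $t\to1$ the equation has leading coefficient $1-t\to0$ and the mass concentrates at $\mathfrak c_\eta$.
\item \emph{Extension across $a\notin S_{t,\eta}$:} this uses the local analysis when $t>p_a$, where $z=a$ is no longer a negative end of the combined phase.
\end{itemize}

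\textbf{Main obstacle.} The hard step is Step 1's passage from the uniform transfer estimate to convergence of root measures for non-integer limits. This requires uniform control over sets of regular points, and a justification that exceptional points, where saddles tie or degenerate, carry no mass. I expect to handle this through $L^1_{\mathrm{loc}}$ compactness and the a.e.\ regularity statement of Proposition~\ref{prop:weighted-contours}, but I have not checked that these suffice uniformly in $j$.
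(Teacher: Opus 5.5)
\textbf{Gap: the exclusion halves of the Hausdorff statements in Step~3.} Continuity of $\Blev_\eta$ in $t$ gives weak continuity of $t\mapsto\mu_{t,\eta}$. Weak convergence, however, only gives the lower estimate: every point of $S_{t,\eta}$ is a limit of points of $S_{t',\eta}$. It cannot rule out small masses persisting near a point $z_0\notin S_{t,\eta}$ for $t'\ne t$, because a potential can be uniformly close to a harmonic function without being harmonic.

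At interior ratios the upper estimate needs more. You must show that the one-saddle separating contour of Proposition~\ref{prop:weighted-contours} persists in $(z,t)$, so that near $z_0$ the barrier remains the real part of a holomorphic critical value for nearby ratios. You also need the extension argument at atoms outside the support. The paper gets this from $Z(G_j^{(m_j)})\to S_{t,\eta}$ for the integer approximations, followed by a diagonal argument.

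At the endpoints you have no valid mechanism, and persistence is unavailable there. ``The mass concentrates at $\mathfrak c_\eta$'' gives only $\mathfrak c_\eta\in\liminf S_{t,\eta}$. ``Tracking the saddle equation'' confuses critical points in the $w$-plane with the support, which is a set of observation points $z$. For fixed $z$ the saddles in fact tend to $\{z\}\cup Z(C_\eta)$ as $t\downarrow0$, and their positions say nothing about harmonicity of the barrier near $z$.

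The paper instead uses Lemma~\ref{lem:small-order-coefficients}, applied to integer approximations and then diagonalized:
\begin{itemize}
\item applied to $\exp(D_j^{-1}\log(G_j(z+u)/G_j(z)))$, it excludes zeros away from $A\cup Z(C_\eta)$ when $m_j=o(D_j)$;
\item applied to $\exp(D_j^{-1}\sum_aM_{a,j}\log(1+(z-a)u))$, it excludes zeros away from $\mathfrak c_\eta$ when $D_j-m_j=o(D_j)$.
\end{itemize}

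\textbf{Further gaps.}
\begin{itemize}
\item The lower estimate as $t\downarrow0$ at points of $Z(C_\eta)\setminus A$ is not addressed; these points carry no $\eta$-mass. If a disk about such a point were missed by $S_{t,\eta}$ along $t\downarrow0$, the nonvanishing holomorphic functions $C_{\mu_{t,\eta}}$ would converge there to $C_\eta$, contradicting Hurwitz's theorem.
\item The threshold $a\in S_{p_a,\eta}$ is not addressed. The paper proves it by the disk-mass argument of \cite[Lemma~3.7]{HS1}; it would also follow from genuine Hausdorff continuity. You need it to get $t>p_a$ before extending across $a\in A\setminus S_{t,\eta}$.
\item Measure convergence plus the definition of the core gives only $S_{\rootmu_P}\subseteq\Sh_P$. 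The reverse inclusion is the complete zero-set theorem \cite[Theorem~5.1]{HS1}.
\end{itemize}

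\textbf{What is sound.} Steps~1 and~2 follow the paper. Your ``main obstacle'' is resolved just as you propose, but two parts of the worry are misplaced. Exceptional points need not carry zero $\mu$-mass: for $\eta_0$ all of $\mu_{t,\eta_0}$ lies on the tie locus $[-2\sqrt{t(1-t)},2\sqrt{t(1-t)}]$. What matters is that they are Lebesgue-null. Nor is uniformity in $j$ needed. Pointwise convergence at each regular point, together with $L^1_{\mathrm{loc}}$ compactness, identifies the potential of every subsequential limit almost everywhere. Continuity of the barrier then gives the identity everywhere off $A$.

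\textbf{Your atom computation is a genuinely different and workable route.} The paper takes the lower bound from the exact multiplicity $M_{a,j}-m_j$ and the exact value from the cleared Cauchy identity with a blow-up at $a$. You can instead use that $\mu(\{a\})$ equals the Lelong number $\lim_{r\to0}(2\pi\log r)^{-1}\int_0^{2\pi}U_\mu(a+re^{i\phi})\dd\phi$. For this, ``the saddle collides with $z$'' must be replaced by the two-sided estimate $\Blev_\eta(z,t)=(p_a-t)_+\log|z-a|+O(1)$ as $z\to a$:
\begin{itemize}
\item the upper bound comes from the min--max identity, using the circle $|w-a|=2|z-a|$ when $t<p_a$ and a fixed small circle about $a$ when $t\ge p_a$;
\item the lower bound comes from a radial path from $z$ to $\infty$.
\end{itemize}
Dividing by $1-t$ then gives $(p_a-t)_+/(1-t)$.
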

\begin{proof}
\emph{Construction and normalization.}
Choose positive integers $M_{a,j}$ with
$\sum_aM_{a,j}=D_j\to\infty$ and $M_{a,j}/D_j\to p_a$, and put
\[
 G_j(w)=\prod_{a\in A}(w-a)^{M_{a,j}}.
\]
On every compact set avoiding $A$, the real normalized logarithms
converge to $U_\eta$, and all derivatives of their local holomorphic
branches converge to those of $\sum_a p_a\log(w-a)$. At a regular
observation point, Proposition~\ref{prop:weighted-contours} supplies
a one-saddle separating contour. Its compact pieces avoid $A$, and its strict
height gap and simple saddle satisfy every hypothesis of
Lemma~\ref{lem:uniform-transfer}.

For integers $m_j$ with $m_j/D_j\to t$, the lemma gives the limit
$\Blev_\eta(z,t)$ of $D_j^{-1}\log|G_j^{(m_j)}(z)/m_j!|$ at almost
every observation point $z$. The leading coefficient
of $G_j^{(m_j)}/m_j!$ is $\binom{D_j}{m_j}$, with
\[
 \frac1{D_j}\log\binom{D_j}{m_j}\longrightarrow\mathfrak h(t).
\]
Lemma~\ref{lem:potential-compactness} gives subsequential weak
limits of the normalized root measures of $G_j^{(m_j)}$ and local
$L^1$ convergence of their logarithmic potentials.
The transfer formula identifies every such limit almost everywhere with
the right-hand side of \eqref{eq:weighted-potential}. Both sides have
their canonical subharmonic representatives, and the barrier is
continuous off $A$, so the identity holds at every nonroot point.
Gauss--Lucas supplies the common compact support $\conv A$. Applying
$\Delta/(2\pi)$ to the potential proves uniqueness and independence of
the integer approximation. For a sequence of ratios tending to $t$,
the same compactness argument and barrier continuity identify every
subsequential limit with $\mu_{t,\eta}$; hence the measures are weakly
continuous in $t$. The estimates are local and uniform while all masses
remain positive and the limiting locations remain distinct.

\emph{Atoms and thresholds.}
Fix $a\in A$. If $t<p_a$, then for every integer approximation with
$m_j/D_j\to t$ the zero $a$ has multiplicity exactly
$M_{a,j}-m_j$ in $G_j^{(m_j)}$. This forces an atom of mass at least
$(p_a-t)/(1-t)$. For the reverse bound, put
$\widehat C=(1-t)C_{\mu_{t,\eta}}$. The exact identity
$Q_\eta G_j'=R_jG_j$, where
$R_j=\sum_aM_{a,j}\prod_{b\ne a}(w-b)$, and its differentiated
Leibniz expansion give, in the weak derivative limit,
\begin{equation}\label{eq:weighted-cleared-cauchy}
 \prod_{b\in A}\bigl((z-b)\widehat C(z)+t\bigr)
 =\sum_{b\in A}p_b
       \prod_{c\in A\setminus\{b\}}
             \bigl((z-c)\widehat C(z)+t\bigr)
 \quad\text{for almost every }z.
\end{equation}
Here truncation of the Cauchy kernel gives local $L^p$ convergence
for $1\le p<2$, just as truncation of the logarithmic kernel gives
Lemma~\ref{lem:potential-compactness}.

If $q=\mu_{t,\eta}(\{a\})>0$, shrinking dilations give
$rC_{\mu_{t,\eta}}(a+ru)\to q/u$ in local $L^p$. Choose
$u\ne0$ and a sequence $r\downarrow0$ for which this limit and
\eqref{eq:weighted-cleared-cauchy} hold pointwise. Multiply every
factor belonging to a site other than $a$ by $r$ and pass to the
limit. Their limiting product is nonzero, and cancellation gives
\[
 t+(1-t)q=p_a.
\]
Together with the forced lower bound this gives exactly
$q=(p_a-t)_+/(1-t)$, including zero mass at and beyond the
threshold. The finite potential off $A$ excludes all other atoms,
proving the first formula in \eqref{eq:weighted-atoms}.

At the threshold $t=p_a<1$, the disk-mass argument in the proof of
\cite[Lemma~3.7]{HS1} applies with $(d-\alpha)\mu_{\alpha,P}$ replaced
by $(1-t)\mu_{t,\eta}$. Its inputs are the atom formula just proved and
the persistence of one-saddle contours on a hypothetical support-free
annulus, supplied here by Proposition~\ref{prop:weighted-contours}.
It follows that $a\in S_{p_a,\eta}$.

\emph{The inverse and exclusion off the support.}
As in the proof of \cite[Proposition~3.5]{HS1}, on a root-free disk
disjoint from $S_{t,\eta}$ the potential formula makes the barrier
harmonic. Proposition~\ref{prop:weighted-contours}
therefore supplies one simple connecting saddle $W$, and spatial
differentiation gives
\[
 (1-t)C_{\mu_{t,\eta}}(z)=\frac{t}{W-z}=C_\eta(W).
\]
This proves \eqref{eq:weighted-W} and \eqref{eq:weighted-inverse}
there. To make the algebra and its nonvanishing consequence explicit,
recall $\widehat C=(1-t)C_{\mu_{t,\eta}}$ and define the
\emph{reduced symbol}
\begin{equation}\label{eq:weighted-reduced-symbol}
 \mathscr P_{t,\eta}(z,\zeta)=
 \zeta^sQ_\eta(z+t/\zeta)
       -\zeta^{s-1}R_\eta(z+t/\zeta).
\end{equation}
After expansion, this is a polynomial in $\zeta$. It satisfies
\[
 \mathscr P_{t,\eta}(z,\widehat C(z))=0,
 \qquad \mathscr P_{t,\eta}(z,0)=t^{s-1}(t-1)\ne0.
\]
Consequently $\widehat C$ cannot vanish off the support, including
at a root by holomorphic continuation.

If $a\in A$ is outside the support, the threshold assertion gives
$t>p_a$. At observation point $a$ the landscape becomes
\[
 (p_a-t)\log|w-a|+\sum_{a'\ne a}p_{a'}\log|w-a'|.
\]
The combined singularity at $a$ is now a positive end, as is infinity.
After deleting the atom at $a$ and normalizing the remaining masses,
this is $1-p_a$ times a nonroot landscape with ratio
$(t-p_a)/(1-p_a)\in(0,1)$.
The coalesced-end argument in the proof of
\cite[Lemma~3.8]{HS1} applies here: its contour and path bounds use
only the strict inequality $t>p_a$ and local holomorphic phases, so
they do not require integer logarithmic weights. It identifies this
barrier with the continuous extension from nearby nonroot observation
points. The harmonic-barrier argument in the same proof then gives a
unique simple connecting saddle and a persistent one-saddle separating
contour at $a$. Thus the inverse identities extend holomorphically
across $a$, with $F_{t,\eta}'(W(z))W'(z)=1$.
For analytic dependence across an observation point $a\in A\setminus S_{t,\eta}$, take the
simple implicit inverse branch on a small disk and identify it on a
surrounding nonroot circle. Its Cauchy model is holomorphic in the disk;
agreement on the circle forces the enclosed mass to vanish, and Cauchy's
integral formula extends agreement throughout the disk. This gives a
common off-support neighborhood and local real-analyticity in $t$.

\emph{Hausdorff continuity and endpoints.}
Weak convergence supplies the lower support estimate for the integer
approximations $G_j^{(m_j)}$. At nonroot points outside $S_{t,\eta}$,
the preceding contours and Lemma~\ref{lem:uniform-transfer} give
uniform zero exclusion. At an atom location $a\in A$ outside the support,
construct the local holomorphic polynomial saddle and nonvanishing Gaussian
amplitude on a whole observation disk. On a surrounding nonroot circle,
local uniqueness identifies its saddle with the transfer saddle; the exact
radicands identify the squares of their amplitudes. The squared transfer
formula therefore has uniformly vanishing relative error against the
nonvanishing holomorphic model on the circle. The maximum modulus
principle extends this error bound through the disk and excludes all
derivative zeros there. Thus
\[
 Z(G_j^{(m_j)})\longrightarrow S_{t,\eta}
 \quad\text{whenever }m_j/D_j\to t\in(0,1).
\]
The same weak lower estimate and persistent-contour upper estimate,
or a diagonal use of this convergence, give Hausdorff continuity in $t$.

For endpoint exclusion we use Lemma~\ref{lem:small-order-coefficients}.
On compact observation
sets avoiding $A\cup Z(C_\eta)$, the functions
\[
 \exp\left(\frac1{D_j}\log\frac{G_j(z+u)}{G_j(z)}\right)
\]
are holomorphic and uniformly bounded on a common small $u$-disk,
have constant term one, and have linear coefficient tending to
$C_\eta(z)\ne0$. The coefficient of order $m_j$ of their $D_j$th
powers is $G_j^{(m_j)}(z)/(m_j!G_j(z))$. Hence there are no zeros
there when $m_j=o(D_j)$. For the other endpoint use
\[
 \exp\left(\frac1{D_j}\sum_{a\in A}M_{a,j}
                              \log(1+(z-a)u)\right).
\]
Its linear coefficient tends to $z-\mathfrak c_\eta$, and the
coefficient of order $D_j-m_j$ of its $D_j$th power is
$G_j^{(m_j)}(z)/m_j!$. This excludes zeros away from the centroid
when $D_j-m_j=o(D_j)$. A diagonal choice of integer approximations
at ratios tending to either endpoint gives the upper estimates in
\eqref{eq:weighted-endpoints}.

For the lower endpoint estimate, follow the proof of
\cite[Theorem~4.5]{HS1}. As $t\downarrow0$, the atom formula gives
$\mu_{t,\eta}\Longrightarrow\eta$ and supplies every point of $A$
in the lower set limit. A disk about a zero of $C_\eta$ cannot be
missed along a sequence of supports: on a smaller such disk the
nonvanishing holomorphic Cauchy transforms would converge to
$C_\eta$, contradicting Hurwitz's theorem. The upper endpoint limit
is a singleton, so nonemptiness gives its lower estimate.

Finally, weak continuity implies the closed-union identity in
\eqref{eq:weighted-shadows}. Integrating the atomic contribution gives
$2\int_0^{p_a}(p_a-t)\dd t=p_a^2$ and no other atoms.

For $\eta=\rootmu_P$, take the exact integer approximation
$G_n=c^{-n}P^n$, where $c$ is the leading coefficient of $P$.
Uniqueness of the fixed-ratio limit and \cite[Theorem~3.3]{HS1}, with
$\alpha=dt$, identify the measures constructed here with those of
Paper~I. The pooled-support identity $S_{\rootmu_P}=\Sh_P$ is then
\cite[Theorem~5.1]{HS1}. Finally, $C_{\rootmu_P}=P'/(dP)$ gives the
rational-map identity in \eqref{eq:polynomial-weighted-identification}.
The degree-one case follows directly by differentiation.
\end{proof}
\section{Complete limits for finite root clusters}
\label{sec:clusters}

The weighted measures retain the positive limiting masses. To recover
all zero locations, we must also retain the limiting root set. We
first prove the complete fixed-ratio and pooled limits, including an
exact count near each exterior transported point. We then give the
integer conservation law at the lower endpoint.

\subsection{The limiting data and the transport condition}
Let $H_j$ be monic of degree $D_j\to\infty$, and suppose
\begin{equation}\label{eq:finite-location-input}
 \eta_j:=\frac1{D_j}\divz H_j\Longrightarrow\eta,
 \qquad d_{\mathrm H}(Z(H_j),B)\longrightarrow0,
\end{equation}
where $B$ is finite and nonempty and $\eta$ is as in
\eqref{eq:weighted-data}. Necessarily $A\subseteq B$.
A point $b\in B\setminus A$ represents a \emph{vanishing-mass
cluster}: every sufficiently small fixed disk about $b$ eventually
contains an input root, but the fraction of input roots in the disk
tends to zero. Neither the number nor the multiplicities of these roots
are assumed bounded, and no lower bound on their separation is imposed.

For $b\in B\setminus A$ with $C_\eta(b)\ne0$, define the straight
transport path
\begin{equation}\label{eq:transport-path}
 \theta_b(t):=F_{t,\eta}(b)=b-\frac{t}{C_\eta(b)},\qquad 0\le t\le1.
\end{equation}
A point of this path contributes an exterior zero, that is, a zero
outside $S_{t,\eta}$, only when the selected inverse $W_{t,\eta}$ of
Proposition~\ref{prop:weighted-foundations} maps it back to $b$.
Accordingly, set
\begin{equation}\label{eq:selected-outliers}
 O_t(\eta,B):=\left\{\theta_b(t):
 \begin{array}{l}
 b\in B\setminus A,\ C_\eta(b)\ne0,\\
 \theta_b(t)\notin S_{t,\eta},\quad
 W_{t,\eta}(\theta_b(t))=b
 \end{array}\right\}.
\end{equation}
The equality involving $W_{t,\eta}$ fixes the inverse branch; the
saddle equation alone would not do so. Thus $O_t(\eta,B)$ is determined
by the limiting data, not by an extra hypothesis on the array. When
$\theta_b(t)\in O_t(\eta,B)$, we say that $b$ is \emph{selected} at
ratio $t$; Figure~\ref{fig:selection} shows a selected and an
unselected ratio for $\eta_0$ and $b=2+i$. Set
\begin{equation}\label{eq:complete-weighted-shadow}
 K_{\eta,B}:=S_\eta\cup B\cup
       \overline{\bigcup_{0<t<1}O_t(\eta,B)}.
\end{equation}
It is compact because the finitely many transport paths lie in bounded
segments.

\begin{theorem}[Complete limits for finite root clusters]\label{thm:finite-cluster-universality}
Assume \eqref{eq:finite-location-input}, and let $0\le m_j<D_j$ with
$m_j/D_j\to t\in(0,1)$. Then
\begin{align}
 \frac1{D_j-m_j}\divz H_j^{(m_j)}
      &\Longrightarrow\mu_{t,\eta},\label{eq:finite-cluster-measure}\\
 Z(H_j^{(m_j)})&\longrightarrow
          S_{t,\eta}\cup O_t(\eta,B)
          \quad\text{in Hausdorff distance}.\label{eq:finite-cluster-fixed-set}
\end{align}
For the pooled measure and zero set in
\eqref{eq:finite-order-objects},
\begin{equation}\label{eq:finite-cluster-pooled}
 \pi_{H_j}\Longrightarrow\rho_\eta,
 \qquad \mathcal Z(H_j)\longrightarrow K_{\eta,B}
 \quad\text{in Hausdorff distance}.
\end{equation}
There is also exact local conservation of multiplicity. For
$z_0=\theta_b(t)\in O_t(\eta,B)$, choose a sufficiently small fixed disk $\mathcal D$ about $z_0$ and a fixed
disk $\mathcal D_b$ about $b$ with $\overline{\mathcal D_b}\subset W_{t,\eta}(\mathcal D)$
and $\mathcal D_b\cap B=\{b\}$. Then, for all sufficiently large $j$,
\begin{equation}\label{eq:outlier-count}
 (\divz H_j^{(m_j)})(\mathcal D)=(\divz H_j)(\mathcal D_b).
\end{equation}
Thus an exterior cluster retains its exact number of zeros, even when
that number is unbounded but sublinear in $D_j$.
\end{theorem}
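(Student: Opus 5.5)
Fix a disk $\mathcal D_b$ about each $b\in B\setminus A$, small enough that $\mathcal D_b\cap B=\{b\}$. Factor $H_j=G_j\prod_{b\in B\setminus A}K_{b,j}$, where $K_{b,j}$ collects the roots in $\mathcal D_b$ and $G_j$ collects the roots near $A$. The degrees $k_{b,j}=\deg K_{b,j}$ are $o(D_j)$. On compact sets avoiding $B$ the normalized logarithms of $H_j$ and of $G_j$ then have the same limit $\sum_ap_a\log(w-a)$, with all derivatives. This is exactly the input of Lemma~\ref{lem:uniform-transfer}, so the argument in the proof of Proposition~\ref{prop:weighted-foundations} carries over.

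\emph{Measure limit.} Lemma~\ref{lem:potential-compactness} together with the transfer formula at regular observation points outside $B$ gives $D_j^{-1}\log|H_j^{(m_j)}(z)/m_j!|\to\Blev_\eta(z,t)$ almost everywhere. Comparing leading coefficients identifies every subsequential limit of the normalized root measures with $\mu_{t,\eta}$, which is \eqref{eq:finite-cluster-measure}. The measure part of \eqref{eq:finite-cluster-pooled} follows by integrating over $t$, using the weak continuity in $t$ from Proposition~\ref{prop:weighted-foundations}. Dominated convergence for the weights $2(D_j-m)/(D_j(D_j+1))$ is routine.

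\emph{Fixed-ratio set limit.} Weak convergence already forces every point of $S_{t,\eta}$ to be a limit of zeros. For the exterior, fix $z$ outside $S_{t,\eta}$ and let $W=W_{t,\eta}(z)$ be its unique selected saddle, which has a persistent one-saddle separating contour. There are two cases.

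\emph{Case $W\notin B$.} The saddle neighborhood is root-free. The contour can be moved off the small disks around $B$: these disks lie strictly below the barrier at points of $A$, and at vanishing-mass points the phase is only perturbed by $o(1)$ away from a shrinking neighborhood. Lemma~\ref{lem:uniform-transfer} then yields a nonvanishing asymptotic, so there are no zeros near $z$. At observation points $z\in A$ we use the extension in Proposition~\ref{prop:weighted-foundations}.

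\emph{Case $W=b\in B\setminus A$.} Here $z=\theta_b(t)$, and this is precisely the condition defining $O_t(\eta,B)$. This case is the \textbf{main obstacle}: the Gaussian transfer breaks down because the cluster sits at the saddle. I would instead write
\[
 \frac{H_j^{(m_j)}(z)}{m_j!}=\frac1{2\pi i}\oint \frac{G_j(w)\,\widetilde K_j(w)}{(w-z)^{m_j+1}}\dd w ,
\]
where $\widetilde K_j$ is the product of all the $K$'s. Through the descending passage near $b$, use the Morse coordinate of the smooth moving phase $\Phi_j=D_j^{-1}\log G_j-(m_j/D_j)\log(w-z)$. Its saddle $W_j(z)$ is holomorphic in $z\in\mathcal D$ and sweeps out a neighborhood of $b$ containing $\overline{\mathcal D_b}$, because $W_{t,\eta}$ is a local biholomorphism (its derivative is nonzero by \eqref{eq:weighted-inverse}).

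The Gaussian integral of width $D_j^{-1/2}$ then gives
\[
 H_j^{(m_j)}(z)/m_j!\;=\;\text{(nonvanishing factor)}\times\bigl(K_{b,j}(W_j(z))+\text{error}\bigr).
\]
The error is controlled relative to $\max_{\partial\mathcal D}|K_{b,j}\circ W_j|$. The reason is that $K_{b,j}$ has all its roots in a tiny disk about $b$ while $|W_j-b|$ is of order one on $\partial\mathcal D$. Hence the Taylor corrections, of relative size $k_{b,j}/(D_j|W_j-b|^2)$, tend to zero because $k_{b,j}=o(D_j)$.

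Rouché's theorem on $\partial\mathcal D$, together with the argument principle for $K_{b,j}\circ W_j$ (which is univalent on $\mathcal D$), gives the count \eqref{eq:outlier-count}. The amplitude has zero winding on $\partial\mathcal D$, as noted after Lemma~\ref{lem:uniform-transfer}, since its radicand is nonvanishing on the whole disk. Because $\mathcal D$ can be shrunk, all these zeros converge to $z$. This gives both the upper and the lower inclusion in \eqref{eq:finite-cluster-fixed-set}. For the lower inclusion we use that $\mathcal D_b$ eventually contains a root, since $Z(H_j)\to B$.

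All estimates are uniform on compact sets of $(z,t)$. The other points of $\theta_b$ with $W\neq b$ fall under the first case.

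\emph{Pooled set limit.} For \eqref{eq:finite-cluster-pooled} we split by the order ratio.
\begin{itemize}
 \item Ratios in a compact subinterval of $(0,1)$: use the uniform version of \eqref{eq:finite-cluster-fixed-set}.
 \item Ratios $t\uparrow1$: all zeros lie near the centroid, by the second coefficient argument with Lemma~\ref{lem:small-order-coefficients}. The centroid lies in $S_\eta$ by \eqref{eq:weighted-endpoints}.
 \item Ratios $t\downarrow0$: Lemma~\ref{lem:small-order-coefficients} excludes zeros away from $B\cup Z(C_\eta)$.
\end{itemize}
The transport paths near $t=0$ start at $b\in B$, which belongs to $K_{\eta,B}$. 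The $t=0$ orders themselves contribute exactly $Z(H_j)\to B$. Finally, $O_t(\eta,B)$ is taken in closure in the definition of $K_{\eta,B}$, and this absorbs the endpoint behaviour at the limiting ratios.
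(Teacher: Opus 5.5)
Your overall architecture matches the paper: potentials and the transfer lemma for the weak limit, exterior exclusion when the selected saddle avoids $B$, the argument principle at transported points, and the endpoint coefficient lemmas for pooling. The counting step at $z_0=\theta_b(t)$, however, has a genuine gap.

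\textbf{The gap.} You keep the cluster factor $K_{b,j}$ in the amplitude and expand about the saddle $W_j$ of the phase of $G_j$ alone. The next Laplace term is governed by
\[
 K''/K=(\log K)''+((\log K)')^2,
\]
not by $(\log K)''$. On $\partial\mathcal D$ one has $(\log K_{b,j})'\approx k_{b,j}/(w-b)$, so the relative correction is of order $k_{b,j}^2/D_j$, not $k_{b,j}/D_j$.

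More fundamentally, when $k_{b,j}\gg\sqrt{D_j}$ the factor $K_{b,j}$ changes by roughly $\exp(k_{b,j}D_j^{-1/2})\to\infty$ across the Gaussian window of width $D_j^{-1/2}$, so it is not an amplitude at all. The saddle of the full phase is displaced from yours by $O(k_{b,j}/D_j)$. Consequently $H_j^{(m_j)}(z)/m_j!$ differs from your leading term by a factor $\exp(O(k_{b,j}^2/D_j))$, which is in general unbounded.

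Your $o(1)$ relative error therefore holds only for $k_{b,j}=o(\sqrt{D_j})$. The theorem explicitly covers every sublinear cluster size, so this is a real loss. Two smaller points:
\begin{itemize}
\item Controlling the error relative to $\max_{\partial\mathcal D}|K_{b,j}\circ W_j|$ is not what Rouch\'e needs. The modulus $|K_{b,j}\circ W_j|$ can vary by a factor exponential in $k_{b,j}$ around $\partial\mathcal D$, so you need pointwise domination.
\item It is $W_j$ that is univalent, not $K_{b,j}\circ W_j$.
\end{itemize}

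\textbf{The repair, using only your own Case 1.} Every $z\in\partial\mathcal D$ has $W_{t,\eta}(z)\ne b$, and $W_{t,\eta}(\partial\mathcal D)$ stays a fixed distance from $B$. So on an annulus about $\partial\mathcal D$ you may apply Lemma~\ref{lem:uniform-transfer} to the full $H_j$, absorbing the cluster into the phase through $H_j'/(D_jH_j)\to C_\eta$. This gives
\[
 H_j^{(m_j)}(z)/m_j!=D_j^{-1/2}\,H_j(W_j(z))\,(W_j(z)-z)^{-m_j}\,\mathfrak g_j(z)\,(1+O(D_j^{-1})),
\]
where $W_j$ is now the full-phase saddle. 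The winding numbers on $\partial\mathcal D$ are then as follows.
\begin{itemize}
\item The curve $W_j(\partial\mathcal D)$ winds once about each input root in $\mathcal D_b$ and zero times about all others, since it converges to the Jordan curve $W_{t,\eta}(\partial\mathcal D)$, which lies at positive distance from $B$. Hence $H_j\circ W_j$ has winding number $(\divz H_j)(\mathcal D_b)$.
\item The factor $(W_j-z)^{m_j}$ has winding number zero, because $W_{t,\eta}(z)-z$ is nonvanishing on $\mathcal D$.
\item The amplitude $\mathfrak g_j$ has winding number zero, because $2\pi t_j\mathfrak g_j^2=W_j'\to W_{t,\eta}'$, which extends without zeros across $\mathcal D$.
\end{itemize}
The argument principle then yields \eqref{eq:outlier-count} with no asymptotics needed inside the disk. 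This is exactly the paper's route, and it works for every cluster size $o(D_j)$.
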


Figure~\ref{fig:trails} illustrates the theorem for the polynomials
$P_N$ of Theorem~\ref{thm:intro-holes}, for which $K_{\eta,B}$ is
determined in Lemma~\ref{lem:quadratic-spikes}, and
Figure~\ref{fig:cubic} for a configuration whose limiting core
$S_\eta$ has interior points.

\begin{figure}[htbp]
\centering
\includegraphics[width=\textwidth]{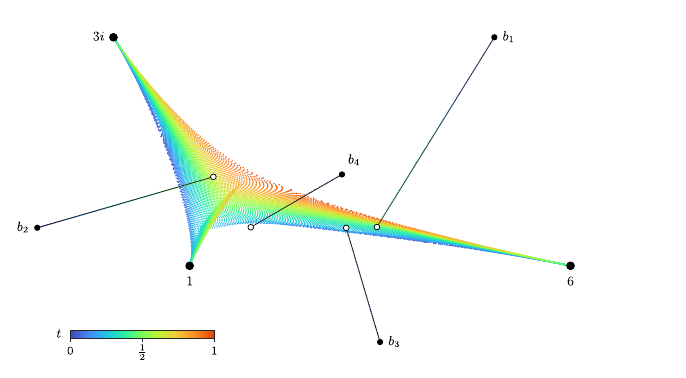}
\caption{All zeros of all derivatives of
$H(z)=((z-1)(z-6)(z-3i))^{70}(z-b_1)(z-b_2)(z-b_3)(z-b_4)$, colored by
$t=m/D$, $D=214$. The limiting core $S_\eta$ of
$\eta=\frac13(\delta_1+\delta_6+\delta_{3i})$ is the curved triangle
filled by the zeros, and $b_1=5+3i$, $b_2=-1+\frac i2$,
$b_3=\frac72-i$, $b_4=3+\frac65i$ are vanishing-mass locations. Dark
lines: the selected parts of their transport paths, which end in $S_\eta$
(circles). A trail may enter $S_\eta$, since it only has to avoid the
fixed-ratio support $S_{t,\eta}$ at its own ratio $t$.}
\label{fig:cubic}
\end{figure}

\begin{proof}
All input roots lie in a common compact set. Away from $B$,
\begin{equation}\label{eq:cluster-log-convergence}
 \frac1{D_j}\log|H_j(w)|\longrightarrow U_\eta(w),\qquad
 \frac{H_j'(w)}{D_jH_j(w)}\longrightarrow C_\eta(w),
\end{equation}
locally uniformly, with all derivatives of the second expression.
To see this, place disjoint small disks around $B$. They eventually
contain every root; weak convergence gives their limiting masses, and
the logarithmic kernels and their derivatives are uniformly bounded and
continuous away from the disks. Shrink the disks to obtain
\eqref{eq:cluster-log-convergence}.

\smallskip\noindent\emph{1. Weak convergence and exterior exclusion.}
At a regular observation point of the limiting landscape, take a
separating contour. Unless its selected saddle belongs to $B$, the
saddle neighborhood avoids $B$. Deform the compact remainder slightly
to avoid the other points of $B$, preserving its winding number and
strict height gap. Equation~\eqref{eq:cluster-log-convergence} then
verifies the hypotheses of Lemma~\ref{lem:uniform-transfer}: the
nearby saddle is uniformly simple and the remainder retains its strict
height gap. The resulting Gaussian contribution is nonzero. A selected saddle can meet
a fixed $b\in B\setminus A$ only at $z=F_{t,\eta}(b)$ with
$C_\eta(b)\ne0$. These finitely many points do not affect
almost-everywhere potential convergence. Subtracting
$D_j^{-1}\log\binom{D_j}{m_j}\to \mathfrak h(t)$ and using potential
compactness proves \eqref{eq:finite-cluster-measure}.
For a continuous test function, this convergence is uniform over
compact interior ratio intervals: a contrary sequence of ratios has
a convergent subsequence, to which the same argument applies.
The weights in \eqref{eq:finite-order-objects} form Riemann sums for
$2(1-t)\dd t$. The remaining endpoint ranges have arbitrarily small
total weight. This proves the pooled weak limit.

The transfer estimate is also uniform near any $z\notin S_{t,\eta}$ for which
$W_{t,\eta}(z)\notin B$. At an atom location $a\in A$ outside the support, take a small observation
disk on which the selected inverse avoids $B$. The local phases of $H_j$ on this disk construct a
holomorphic nonvanishing saddle model. On a surrounding nonroot circle,
the transfer formula has the same saddle and squared amplitude as this
model; the maximum modulus principle propagates the squared relative
error through the disk and excludes zeros there. The reduced saddle
equation prevents the selected inverse from meeting $A$.
Every exterior limit of zeros must therefore belong to $O_t(\eta,B)$,
which is the upper estimate in \eqref{eq:finite-cluster-fixed-set}.

\smallskip\noindent\emph{2. Exact counting at a transported cluster.}
Fix $z_0=F_{t,\eta}(b)\in O_t(\eta,B)$. On a small closed disk
$\overline{\mathcal D}$ about $z_0$, the selected inverse $W=W_{t,\eta}$ is
holomorphic and one-to-one, $W-z$ is nonzero, and $W(\overline{\mathcal D})$
avoids $A$ and every point of $B$ except $b\in W(\mathcal D)$. Choose $\mathcal D_b$
with closure in $W(\mathcal D)$ and containing no other point of $B$.

On an annulus about $\partial\mathcal D$, the equation
\[
 \frac{H_j'(w)}{D_jH_j(w)}-\frac{t_j}{w-z}=0,
 \qquad t_j:=m_j/D_j,
\]
has a single-valued simple branch $W_j(z)$ converging to $W(z)$,
with derivatives. This follows from the implicit function theorem and
\eqref{eq:cluster-log-convergence}, since $W(\partial\mathcal D)$ avoids $B$.
A finite cover of the annulus by the parameter charts of
Lemma~\ref{lem:uniform-transfer} gives
\begin{equation}\label{eq:outlier-annulus-asymptotic}
 \frac{H_j^{(m_j)}(z)}{m_j!}
  =\frac{1}{\sqrt{D_j}}
      \frac{H_j(W_j(z))}{(W_j(z)-z)^{m_j}}
       \mathfrak g_j(z)\bigl(1+O(D_j^{-1})\bigr).
\end{equation}
The amplitude $\mathfrak g_j$ is holomorphic, nonzero, and has winding number
zero on $\partial\mathcal D$. For the last assertion, put locally
\[
 C_j(w)=\frac{H_j'(w)}{D_jH_j(w)},\qquad
 F_j(w)=w-\frac{t_j}{C_j(w)},\qquad
 \phi_j(w,z)=D_j^{-1}\log H_j(w)-t_j\log(w-z).
\]
The saddle equation gives
\[
 (W_j-z)^2\partial_w^2\phi_j(W_j,z)
    =t_j F_j'(W_j),
\]
and therefore
\[
 \mathfrak g_j(z)=\frac{1}{\sqrt{2\pi t_j F_j'(W_j(z))}},
\]
with sign fixed by the oriented contour. Differentiating $F_j(W_j(z))=z$
gives the equivalent squared identity
\[
 2\pi t_j\mathfrak g_j(z)^2=W_j'(z).
\]
Its right-hand side converges uniformly on the annulus to $W'(z)$,
which extends holomorphically and without zeros across $\mathcal D$.
Rouch\'e's theorem therefore gives winding number zero for
$\mathfrak g_j^2$, hence also for $\mathfrak g_j$.
This conclusion uses extension of the limiting squared amplitude across
the disk; pointwise nonvanishing on the annulus would not suffice.

The curve $W_j(\partial\mathcal D)$ winds once about every input root in $\mathcal D_b$
and zero times about every other input root. These winding numbers are
stable because the limiting curve avoids $B$ and $W$ is one-to-one.
Also $W_j(z)-z$ has zero winding on $\partial\mathcal D$, since its limit
$W(z)-z$ is holomorphic and nonzero on $\mathcal D$. The leading expression in
\eqref{eq:outlier-annulus-asymptotic} thus has winding number exactly
$(\divz H_j)(\mathcal D_b)$. The relative error is holomorphic on the annulus and eventually has
modulus less than one, so its addition does not change the winding
number. The argument principle proves
\eqref{eq:outlier-count}. Since the input cluster is nonempty, every
such disk contains a descendant zero. Together with weak convergence,
this proves the lower estimate in \eqref{eq:finite-cluster-fixed-set}.

\smallskip\noindent\emph{3. Pooling and the endpoint ratios.}
For the lower estimate in \eqref{eq:finite-cluster-pooled}, weak
convergence supplies $S_\eta$, order zero supplies $B$, and
\eqref{eq:outlier-count} supplies each $O_t(\eta,B)$ at an interior
ratio. A finite covering of their compact closure gives the required
Hausdorff estimate.

For the upper estimate, take a convergent sequence of derivative zeros
and pass to $m_j/D_j\to t\in[0,1]$. Interior ratios are already
settled. At $t=0$, on compact sets avoiding $B\cup Z(C_\eta)$ use
\[
 \mathscr A_{j,z}(u)=
 \exp\!\left(\frac1{D_j}\log\frac{H_j(z+u)}{H_j(z)}\right).
\]
Its constant term is one, its linear coefficient tends uniformly to
$C_\eta(z)\ne0$, and the coefficient of its $D_j$th power at order
$m_j$ is $H_j^{(m_j)}(z)/(m_j!H_j(z))$.
Lemma~\ref{lem:small-order-coefficients} excludes zeros for
$m_j=o(D_j)$; order zero is covered by the root-set assumption. Lower-endpoint limits
therefore lie in $B\cup Z(C_\eta)\subset B\cup S_\eta$.

At $t=1$, list the roots with multiplicity as
$a_{1,j},\ldots,a_{D_j,j}$ and apply the same lemma to
\[
 \mathscr B_{j,z}(u)=
 \exp\!\left(\frac1{D_j}\sum_{\ell=1}^{D_j}
                              \log(1+(z-a_{\ell,j})u)\right).
\]
Its linear coefficient tends to $z-\mathfrak c_\eta$, and its $D_j$th power is
$u^{D_j}H_j(z+u^{-1})$. The coefficient at order
$D_j-m_j=o(D_j)$ is $H_j^{(m_j)}(z)/m_j!$. Every upper-endpoint limit
is therefore $\mathfrak c_\eta\in S_\eta$, completing the proof.
\end{proof}

\subsection{Endpoint universality and an integer conservation law}
The small-order argument also applies to general root arrays. It
requires the root locations as well as the limiting measure, but no
rate of clustering and no nondegeneracy condition.

\begin{theorem}[Sublinear differentiation of arbitrary finite clusters]
\label{thm:cluster-endpoint-counts}
Let $H_j$ have degree $D_j\to\infty$, suppose
$D_j^{-1}\divz H_j\Longrightarrow\eta$, and suppose
$Z(H_j)\to B$ in Hausdorff distance, where $B$ is finite and
$\eta=\sum_{a\in A}p_a\delta_a$ has positive masses and $A\subseteq B$.
Set $E=B\cup Z(C_\eta)$. For $x\in E$, let
$\iota_x=\operatorname{ord}_x C_\eta$: this is $-1$ at a pole,
its positive zero multiplicity at a zero, and $0$ elsewhere.
Choose disjoint sufficiently small open disks $\mathcal D_x$ about the points
of $E$. For any integers $M_j\ge0$ with $M_j/D_j\to0$, eventually
for every $0\le m\le M_j$,
\begin{equation}\label{eq:cluster-endpoint-counts}
 (\divz H_j^{(m)})(\mathcal D_x)
   =(\divz H_j)(\mathcal D_x)+m\iota_x\qquad(x\in E).
\end{equation}
All zeros of $H_j^{(m)}$ lie in these disks. Moreover, for
$1\le m_j<D_j$,
\begin{align}
 m_j/D_j\to0&\quad\Longrightarrow\quad
 Z(H_j^{(m_j)})\longrightarrow B\cup Z(C_\eta),
 \label{eq:cluster-low-endpoint}\\
 m_j/D_j\to1&\quad\Longrightarrow\quad
 Z(H_j^{(m_j)})\longrightarrow\{\mathfrak c_\eta\}.
 \label{eq:cluster-high-endpoint}
\end{align}
Both limits are in Hausdorff distance.
\end{theorem}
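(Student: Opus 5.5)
The plan is to localize with Lemma~\ref{lem:small-order-coefficients} and count zeros with the argument principle on the circles $\partial\mathcal D_x$.

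\emph{Step 1: all zeros lie in the disks.} Let $X$ be the compact set obtained from a large closed disk by removing $\bigcup_x\mathcal D_x$. For $z$ near $X$, put
\[
 \mathscr A_{j,z}(u)=\exp\!\Bigl(\frac1{D_j}\log\frac{H_j(z+u)}{H_j(z)}\Bigr).
\]
By \eqref{eq:cluster-log-convergence} this is holomorphic and uniformly bounded on a common $u$-disk, has constant term one, and its linear coefficient $H_j'(z)/(D_jH_j(z))$ tends uniformly to $C_\eta(z)$. This limit is bounded away from zero on $X$, because $X$ avoids $Z(C_\eta)$. We have
\[
 [u^m]\mathscr A_{j,z}^{D_j}=\frac{H_j^{(m)}(z)}{m!\,H_j(z)}.
\]
For $1\le m\le M_j$, Lemma~\ref{lem:small-order-coefficients} gives nonvanishing, uniformly in $m$. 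To get this uniformity, argue by contradiction: take a worst sequence $m=m_j'\le M_j$. If $m_j'$ stays bounded along a subsequence, the coefficient is $\binom{D_j}{m}D_j^{-m}$ times a polynomial in the normalized log-derivatives, and it tends to $C_\eta(z)^m/m!\ne0$. Otherwise apply the lemma to $k_n=m_j'$. Far-away $z$ are handled trivially, since $H_j^{(m)}$ has all its zeros in the convex hull by Gauss--Lucas. Together with $m=0$, which is covered by the root-set assumption, this proves that all zeros of $H_j^{(m)}$ lie in the disks.

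\emph{Step 2: counting.} Use the logarithmic-derivative estimate \eqref{eq:small-coeff-logder}, in its uniform version, on a neighborhood of $\partial\mathcal D_x$. Writing $H_j^{(m)}=m!\,H_j\cdot[u^m]\mathscr A^{D_j}$, we get
\[
 \frac{(H_j^{(m)})'}{H_j^{(m)}}=\frac{H_j'}{H_j}+D_j\,\partial_z\log\mathscr A_{j,z}(0)+O(m)=\frac{H_j'}{H_j}+O(m),
\]
because $\mathscr A_{j,z}(0)=1$. This crude form loses the $m\iota_x$ term, so the main obstacle is to extract the exact correction. I would refine the expansion. The coefficient equals
\[
 \frac{(D_jc_j)^m}{m!}\bigl(1+\epsilon_j(z,m)\bigr),\qquad c_j=\frac{H_j'}{D_jH_j}\to C_\eta,
\]
with $\epsilon_j$ holomorphic in $z$ and small. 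Then
\[
 \frac{H_j^{(m)}}{H_j}=\frac{(D_jc_j)^m}{\binom{D_j}{m}^{-1}\cdot\text{const}}\bigl(1+\epsilon\bigr),
\]
and the winding of $H_j^{(m)}$ on $\partial\mathcal D_x$ is the winding of $H_j$ plus $m$ times the winding of $c_j$. The latter tends to that of $C_\eta$, namely $\iota_x$; this holds because the winding of $c_j$ is an integer and $c_j\to C_\eta$ uniformly on the circle, which avoids $E$. The winding of $1+\epsilon$ is zero once $|\epsilon|<1$. The proof of this smallness, uniformly for $m\le M_j$, is the real work. I would prove it by a saddle-point estimate for the Cauchy integral of $\mathscr A^{D_j}$ on a circle of radius $\asymp m/D_j$. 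There the phase $D_j\log\mathscr A-m\log u$ has a simple saddle at $u\approx m/(D_jC_\eta)$, and the quadratic term of $\log\mathscr A$ contributes only $O(m^2/D_j)$ corrections. If that term is not small, I would instead compare ratios at successive orders: this gives \eqref{eq:cluster-endpoint-counts} by induction on $m$ if each step changes the winding by exactly $\iota_x$, using Rouché applied to $(H_j^{(m+1)}/H_j^{(m)})\cdot (D_jc_j)^{-1}\approx 1$.

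\emph{Step 3: endpoint limits.} For \eqref{eq:cluster-low-endpoint}, Step 1 gives the upper estimate with $M_j=m_j$, with the disks shrinking slowly. For the lower estimate, each $x\in B$ has a nonempty cluster, so the count $(\divz H_j)(\mathcal D_x)+m\iota_x$ is positive. At a pole, $m\le M_j$ is smaller than the mass $\sim p_xD_j$; at a vanishing-mass point $\iota_x=0$ unless $x\in A$. Each zero of $C_\eta$ gains $m\iota_x\ge1$ zeros. For \eqref{eq:cluster-high-endpoint}, repeat the $\mathscr B_{j,z}$ argument from the end of the proof of Theorem~\ref{thm:finite-cluster-universality}; nonemptiness then gives the lower estimate.
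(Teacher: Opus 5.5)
\textbf{The gap is in Step~2.} Your main route needs
\[
 m!\,[u^m]\mathscr A_{j,z}(u)^{D_j}=(D_jc_j)^m\bigl(1+\epsilon_j(z,m)\bigr),\qquad |\epsilon_j|<1,
\]
uniformly for $m\le M_j$. This is false once $m\gg\sqrt{D_j}$, a range the hypothesis $M_j=o(D_j)$ allows. The quadratic term $\tfrac12D_jc_j'(z)u^2$ of $D_j\log\mathscr A_{j,z}(u)$ multiplies the coefficient by a $z$-dependent factor of rough size $\exp\bigl(\frac{c_j'}{2c_j^2}\frac{m^2}{D_j}\bigr)$. For example, take $H_j=(z^2-1)^N$ on a small circle about the zero $0$ of $C_{\eta_0}$. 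There this factor is about $\exp\bigl(-\frac{(z^2+1)m^2}{4Nz^2}\bigr)$, whose modulus is far from $1$ and from any constant.

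Such factors are exponentials of single-valued functions on the annulus. A uniform saddle-point analysis over all $1\le m\le M_j$ could therefore be made to work, but you do not carry it out. Your fallback, Rouch\'e applied to $H_j^{(m+1)}/(D_jc_jH_j^{(m)})\approx1$, is asserted without any estimate. As written, the count \eqref{eq:cluster-endpoint-counts} is not proved.

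\textbf{The missing observation:} the display you dismissed as crude is already that estimate. Since $(H_j^{(m)})'/H_j^{(m)}=H_j^{(m+1)}/H_j^{(m)}$ and $H_j'/H_j=D_jc_j$, it says
\[
 \frac{H_j^{(m+1)}}{D_jc_j\,H_j^{(m)}}=1+O\Bigl(\frac{m}{D_j}\Bigr)
\]
on a neighborhood of $\partial\mathcal D_x$. There $|c_j|$ is bounded below because the circle avoids $E$. Your worst-sequence argument from Step~1 makes the estimate uniform in $0\le m<M_j$, and the case $m=0$ is exact.

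Hence $H_j^{(m+1)}/H_j^{(m)}$ has the winding number of $c_j$ on $\partial\mathcal D_x$, which equals $\iota_x$ for large $j$. Each differentiation therefore changes the zero count in $\mathcal D_x$ by exactly $\iota_x$, and telescoping from $m=0$ gives \eqref{eq:cluster-endpoint-counts}.

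The $O(m)$ error is lossy only if you integrate it around the circle to get the winding of $H_j^{(m)}$ in one step. Used one order at a time, it is exactly the winding-number and telescoping argument the paper transplants from the companion paper.

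Steps~1 and~3 otherwise agree with the paper, with one small correction in Step~3. A vanishing-mass location can be a zero of $C_\eta$, so $\iota_x$ need not vanish there. All you actually use is $\iota_x\ge0$, which does hold.
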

\begin{proof}
Apply the proof of \cite[Theorem~4.3]{HS1} with the following array
substitution. Set $C_j=H_j'/(D_jH_j)$. By
\eqref{eq:cluster-log-convergence}, $C_j\to C_\eta$ locally uniformly,
with derivatives, away from $B$. Near the disk boundaries the functions
\[
 \mathscr A_{j,z}(u)=\exp\left(\int_0^u C_j(z+v)\dd v\right),
 \qquad
 \mathscr A_{j,z}(u)^{D_j}=\frac{H_j(z+u)}{H_j(z)}
\]
satisfy the varying-family hypotheses of
Lemma~\ref{lem:small-order-coefficients}. The winding-number and
telescoping argument of the cited proof therefore applies with exponent
$D_j$, limiting logarithmic derivative $C_\eta$, and initial counts
$(\divz H_j)(\mathcal D_x)$. It gives
\eqref{eq:cluster-endpoint-counts} simultaneously for $0\le m\le M_j$;
since $\sum_{x\in E}\iota_x=-1$, these counts exhaust the degree.

For the lower Hausdorff limit, the same proof applies at the atoms and
at the zeros of $C_\eta$. At each additional location
$x\in B\setminus A$, the initial count is eventually at least one and
$\iota_x\ge0$, so that cluster also remains nonempty. Shrinking the
disks gives \eqref{eq:cluster-low-endpoint}. The upper-endpoint limit
is the reverse-coefficient argument of \cite[Theorem~4.2]{HS1},
applied to the normalized varying families already used at the end of
the proof of Theorem~\ref{thm:finite-cluster-universality}; their
linear coefficients tend to $z-\mathfrak c_\eta$.
\end{proof}

In Figure~\ref{fig:trails}(b), where $\eta=\eta_0$ and
$C_{\eta_0}(w)=w/(w^2-1)$, the rows of small ratio emerge from $0$, the
zero of $C_{\eta_0}$; near $0$ each derivative order adds one zero,
while near each atom $\pm1$ it removes one, and the simple roots $2+i$
and $3+i$ keep one zero each.

\subsection{Consequences and examples}
\begin{corollary}[Clusters with no additional limiting locations]
\label{thm:cluster-universality}
Let $P$ be monic of degree $d\ge2$ and $\deg H_j=D_j\to\infty$.
Suppose
\begin{equation}\label{eq:cluster-input}
 \frac1{D_j}\divz H_j\Longrightarrow\rootmu_P,
 \qquad
 \max_{a\in Z(H_j)}\dist(a,Z(P))\longrightarrow0.
\end{equation}
If $m_j/D_j\to t\in(0,1)$, then
\begin{equation}\label{eq:cluster-fixed}
 \frac1{D_j-m_j}\divz H_j^{(m_j)}\Longrightarrow\mu_{t,\rootmu_P},
 \qquad Z(H_j^{(m_j)})\longrightarrow\FixSh_{t,P}.
\end{equation}
Moreover,
\begin{equation}\label{eq:cluster-pooled}
 \pi_{H_j}\Longrightarrow\rho_P,
 \qquad\mathcal Z(H_j)\longrightarrow\Sh_P.
\end{equation}
The set limits are Hausdorff limits. No convergence rate, within-cluster
separation, equality of multiplicities, or simplicity assumption is
required.
\end{corollary}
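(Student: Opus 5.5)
The plan is to reduce the corollary to Theorem~\ref{thm:finite-cluster-universality} with $\eta=\rootmu_P$ and $B=Z(P)$, and then identify the limiting objects with those of $P$ through Proposition~\ref{prop:weighted-foundations}.

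First we verify the hypotheses \eqref{eq:finite-location-input}. The weak limit $\rootmu_P$ has atoms exactly at the distinct roots of $P$, all with positive mass $p_j=r_j/d$, so $A=Z(P)$. The hypothesis in \eqref{eq:cluster-input} gives only one half of Hausdorff convergence: every root of $H_j$ eventually lies near $Z(P)$. For the other half, fix $a\in Z(P)$ and a small disk about it. Weak convergence, tested against a continuous bump supported in the disk, forces the disk to carry asymptotically mass $p_a>0$, so it eventually contains a root of $H_j$. Hence $Z(H_j)\to Z(P)=A$ in Hausdorff distance, and we take $B=A$. In particular $B\setminus A=\emptyset$.

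Next we apply Theorem~\ref{thm:finite-cluster-universality}. Since $B\setminus A$ is empty, the outlier set $O_t(\eta,B)$ in \eqref{eq:selected-outliers} is empty for every $t$. So \eqref{eq:finite-cluster-measure} and \eqref{eq:finite-cluster-fixed-set} give weak convergence to $\mu_{t,\rootmu_P}$ and Hausdorff convergence $Z(H_j^{(m_j)})\to S_{t,\rootmu_P}=\FixSh_{t,P}$ by definition. For the pooled statement, \eqref{eq:finite-cluster-pooled} gives $\pi_{H_j}\Longrightarrow\rho_{\rootmu_P}=\rho_P$ and $\mathcal Z(H_j)\to K_{\eta,B}=S_{\rootmu_P}\cup Z(P)$. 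Every $a\in Z(P)$ lies in $S_{t,\rootmu_P}$ for small $t$ by Proposition~\ref{prop:weighted-foundations}, so $Z(P)\subseteq S_{\rootmu_P}$. The identity \eqref{eq:polynomial-weighted-identification} then gives $K_{\eta,B}=\Sh_P$.

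The assumption $d\ge2$ is not essential; it only matches the setting of the cited core results. For $d=1$ the same argument would apply with the singleton core. The theorem needs no rate, separation, or multiplicity assumptions, so none are needed here.

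The only step requiring care is the Hausdorff half of the hypothesis, that is, showing that each atom of $P$ is eventually approached by roots of $H_j$. The weak-convergence argument above settles it, and everything else is bookkeeping.
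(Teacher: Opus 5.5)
Your proposal is correct and follows essentially the paper's own proof: deduce $Z(H_j)\to Z(P)$ from the positive limiting masses, then apply Theorem~\ref{thm:finite-cluster-universality} with $B=A=Z(P)$, so there are no transport paths. You additionally spell out the weak-convergence bump argument and the inclusion $Z(P)\subseteq S_{\rootmu_P}$ (via the atom threshold in Proposition~\ref{prop:weighted-foundations}) that give $K_{\eta,B}=\Sh_P$, which the paper leaves implicit.
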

\begin{proof}
Every root of $P$ has positive limiting mass, so
\eqref{eq:cluster-input} implies $Z(H_j)\to Z(P)$.
Apply Theorem~\ref{thm:finite-cluster-universality} with $B=A=Z(P)$.
There are no additional transport paths.
\end{proof}

\begin{corollary}[Limits of the input shadows]
\label{cor:cluster-entire-shadows}
Under \eqref{eq:finite-location-input},
\begin{equation}\label{eq:entire-finite-cluster}
 \Sh_{H_j}\longrightarrow K_{\eta,B},\qquad
 \AllSh_{H_j}\longrightarrow K_{\eta,B}
 \quad\text{in Hausdorff distance}.
\end{equation}
Under \eqref{eq:cluster-input}, both limits are $\Sh_P$, not in
general $\AllSh_P$.
\end{corollary}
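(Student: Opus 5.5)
The plan is to reduce Corollary~\ref{cor:cluster-entire-shadows} to Theorem~\ref{thm:finite-cluster-universality} by a diagonal argument over the power levels of each fixed $H_j$, controlled by the uniform core convergence of Proposition~\ref{prop:shadow-foundations}.

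For fixed $j$, write $\deg H_j=D_j$. The powers $H_j^n$ have degree $nD_j$. Their root measures equal $\eta_j$ and their root sets equal $Z(H_j)$. By Proposition~\ref{prop:shadow-foundations},
\[
 \mathcal Z(H_j^n)\to\Sh_{H_j}\qquad(n\to\infty).
\]
Hence for each $j$ we can choose $n_j\ge1$ with
\[
 d_{\mathrm H}\bigl(\mathcal Z(H_j^{n_j}),\Sh_{H_j}\bigr)<1/j.
\]
The array $\widetilde H_j:=H_j^{n_j}$ has degree $n_jD_j\to\infty$. It satisfies \eqref{eq:finite-location-input} with the same limiting data $(\eta,B)$, because powering changes neither the normalized divisor nor the set of distinct roots. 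Theorem~\ref{thm:finite-cluster-universality} applied to $\widetilde H_j$ gives $\mathcal Z(\widetilde H_j)\to K_{\eta,B}$. The triangle inequality then yields $\Sh_{H_j}\to K_{\eta,B}$.

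For the full shadow, note that $\Sh_{H_j}\subseteq\AllSh_{H_j}$, so a lower estimate is free. For the upper estimate I would show that every point of $\AllSh_{H_j}$ is within $o(1)$ of $K_{\eta,B}$, uniformly over all $n$, not only along the chosen $n_j$. Argue by contradiction. Suppose there are $\epsilon>0$, a subsequence, and levels $n'_j$ with points $z_j\in\mathcal Z(H_j^{n'_j})$ satisfying $\dist(z_j,K_{\eta,B})\ge\epsilon$. Then $H_j^{n'_j}$ is again an admissible array with data $(\eta,B)$, since $n'_jD_j\to\infty$ regardless of how $n'_j$ behaves. The pooled Hausdorff limit of Theorem~\ref{thm:finite-cluster-universality} forces $\mathcal Z(H_j^{n'_j})\to K_{\eta,B}$, which is a contradiction. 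Taking the closure in \eqref{eq:full-shadow-definition} costs nothing, and $\AllSh_{H_j}\to K_{\eta,B}$ follows. The same contradiction argument gives the core statement directly, since $\Sh_{H_j}\subseteq\AllSh_{H_j}$. So the diagonal choice is needed only to get the lower bound for $\Sh_{H_j}$.

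Under \eqref{eq:cluster-input} we have $B=A=Z(P)$ and $\eta=\rootmu_P$. There are no transport paths, so $K_{\eta,B}=S_{\rootmu_P}\cup Z(P)=\Sh_P$, using \eqref{eq:polynomial-weighted-identification} and $Z(P)\subseteq\Sh_P$. For the final remark I would exhibit a simple example, for instance $H_j=P$ constant, where $\AllSh_P$ contains transient zeros outside $\Sh_P$. Whenever $T_P\neq\emptyset$ the Hausdorff limit $\Sh_P$ then differs from $\AllSh_P$.

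The main subtlety is the uniformity in $n$: all estimates must be applied to arrays indexed by arbitrary levels $n'_j$. That is legitimate because Theorem~\ref{thm:finite-cluster-universality} assumes nothing beyond \eqref{eq:finite-location-input}.
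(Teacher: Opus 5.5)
Your argument for the two Hausdorff limits is essentially the paper's. You choose power levels $n_j$ diagonally using the core convergence of Proposition~\ref{prop:shadow-foundations}, apply Theorem~\ref{thm:finite-cluster-universality} to the power array $H_j^{n_j}$, and argue by contradiction at arbitrary levels $n_j'$ for the upper full-shadow estimate. The only difference is cosmetic. You get the full-shadow lower bound from $\Sh_{H_j}\subseteq\AllSh_{H_j}$ and the core limit, whereas the paper uses $\mathcal Z(H_j)\subseteq\AllSh_{H_j}$ and \eqref{eq:finite-cluster-pooled} at level one; both work.

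The one genuine error is your illustration of ``not in general $\AllSh_P$''. The constant sequence $H_j=P$ is not admissible, because the standing hypothesis requires $D_j\to\infty$. Worse, for that sequence $\AllSh_{H_j}=\AllSh_P$ for every $j$. If it were admissible, it would contradict the limit $\Sh_P$ you have just proved whenever $T_P\neq\emptyset$.

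The clause means that the full shadows of an \emph{admissible} array converge to $\Sh_P$, so the limit forgets the transient zeros of $P$. Take for instance $H_j=P^j$. Its full shadows satisfy $\AllSh_{P^j}\subseteq\AllSh_P$, but they converge to $\Sh_P$, which differs from $\AllSh_P$ whenever $T_P\neq\emptyset$.

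You should also say why \eqref{eq:cluster-input} gives $Z(H_j)\to Z(P)$ in Hausdorff distance, since only the one-sided distance bound is assumed there. The reason is that every root of $P$ has positive limiting mass, so weak convergence puts input roots near each point of $Z(P)$, as in Corollary~\ref{thm:cluster-universality}.
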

\begin{proof}
For each fixed $H_j$, the core convergence in
Proposition~\ref{prop:shadow-foundations} supplies $k_j\ge1$ with
$d_{\mathrm H}(\mathcal Z_{H_j,k_j},\Sh_{H_j})<1/j$.
The polynomial $H_j^{k_j}$ has the same normalized root measure and
root locations as $H_j$, and its degree tends to infinity. Hence
Theorem~\ref{thm:finite-cluster-universality} gives
$\mathcal Z_{H_j,k_j}=\mathcal Z(H_j^{k_j})\to K_{\eta,B}$.

For the upper full-shadow estimate, approximate any proposed offending
point of $\AllSh_{H_j}$ within $1/j$ by a zero of a derivative of
$H_j^{\ell_j}$, for some $\ell_j\ge1$. These polynomials satisfy the
same limiting data, so their derivative zeros approach $K_{\eta,B}$.
The common root hull gives compactness. The lower estimate follows
from $\mathcal Z(H_j)\subseteq\AllSh_{H_j}$ and
\eqref{eq:finite-cluster-pooled}.
\end{proof}

\begin{corollary}[Continuity with mass and location data]
\label{cor:mass-location-continuity}
Let $\eta_j$ be finitely supported probability measures and $B_j$
finite sets containing their supports. If
$\eta_j\Longrightarrow\eta$ and $B_j\to B$ in Hausdorff distance,
where $B$ is finite, then
\[
 K_{\eta_j,B_j}\longrightarrow K_{\eta,B}.
\]
If also $t_j\to t\in(0,1)$, then
\[
 S_{t_j,\eta_j}\cup O_{t_j}(\eta_j,B_j)
   \longrightarrow S_{t,\eta}\cup O_t(\eta,B)
\]
in Hausdorff distance. Thus complete limits remain continuous when
positive masses vanish or clusters merge, provided the limiting
locations are retained.
\end{corollary}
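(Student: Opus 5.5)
We realize the data $(\eta_j,B_j)$ by polynomial arrays and reduce both assertions to Theorem~\ref{thm:finite-cluster-universality}, using a diagonal argument. Approximating each $\eta_j$ by rational masses changes nothing at the level of the limiting objects. So for each $j$ we choose, by that theorem applied to $(\eta_j,B_j)$, a monic polynomial $G_j$ of large degree $D_j$ with the following properties. Every point of $B_j$ is a root of $G_j$; the points of $B_j\setminus\supp\eta_j$ carry a single root or a bounded number of roots. Moreover $d_{\mathrm H}(\mathcal Z(G_j),K_{\eta_j,B_j})<1/j$, the normalized root measure of $G_j$ is within $1/j$ of $\eta_j$ in a fixed metric for weak convergence, and $Z(G_j)=B_j$ exactly. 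To build $G_j$ concretely, take integer multiplicities $M_{a,k}\approx kp_a$ at the atoms of $\eta_j$ and multiplicity one at the vanishing-mass points of $B_j$. This is a degree-$k$ array realizing $(\eta_j,B_j)$, so Theorem~\ref{thm:finite-cluster-universality}, in its pooled form \eqref{eq:finite-cluster-pooled}, gives $\mathcal Z\to K_{\eta_j,B_j}$ as $k\to\infty$. We take $G_j$ to be a member of this array with $k=k_j$ large enough that $D_j\ge j$ and the stated closeness holds.

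The diagonal array $(G_j)$ then satisfies \eqref{eq:finite-location-input} with data $(\eta,B)$. Its normalized root measures converge weakly to $\eta$ because $\eta_j\Longrightarrow\eta$, and $Z(G_j)=B_j\to B$ in Hausdorff distance. One point needs checking: masses of $\eta_j$ that tend to zero, or atoms of $\eta_j$ that merge into a point of $B\setminus A$. These cause no problem, because the hypothesis of the theorem is only weak convergence together with root-set convergence. No lower mass bound and no separation condition is required; the theorem explicitly allows unbounded sublinear clusters. Applying \eqref{eq:finite-cluster-pooled} to this array gives $\mathcal Z(G_j)\to K_{\eta,B}$. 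Combined with $d_{\mathrm H}(\mathcal Z(G_j),K_{\eta_j,B_j})<1/j$ and the triangle inequality, this proves $K_{\eta_j,B_j}\to K_{\eta,B}$.

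For the fixed-ratio statement we use the same diagonal device, now controlling a single derivative order. For each $j$ we choose $k_j$ and an order $m_j$ with $m_j/D_j$ within $1/j$ of $t_j$ such that
\[
d_{\mathrm H}\bigl(Z(G_j^{(m_j)}),S_{t_j,\eta_j}\cup O_{t_j}(\eta_j,B_j)\bigr)<1/j .
\]
This is possible by \eqref{eq:finite-cluster-fixed-set}, applied at fixed $j$ with ratios converging to $t_j$. Then $m_j/D_j\to t\in(0,1)$, and \eqref{eq:finite-cluster-fixed-set} for the diagonal array gives convergence of $Z(G_j^{(m_j)})$ to $S_{t,\eta}\cup O_t(\eta,B)$, which finishes the second claim.

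The main obstacle is uniformity in the inner limit at fixed $j$: we need ratios $m/D$ that are not exactly $t_j$, only converging to it. Theorem~\ref{thm:finite-cluster-universality} already allows $m_j/D_j\to t$ rather than requiring exact equality, so any sequence of orders with ratio tending to $t_j$ is admissible. A second delicate point is the case where $t_j\to t$ but some $t_j$ lie near an endpoint of $(0,1)$; this cannot occur, since $t\in(0,1)$. Finally, Hausdorff closeness of $\mathcal Z$ must be meaningful in a fixed compact set, which holds because all the sets involved lie in $\conv B_j$ enlarged by the bounded transport segments. These segments are uniformly bounded because $C_{\eta_j}(b)$ stays away from zero only where it matters: where $C_{\eta}(b)=0$, the transport point $\theta_b(t)$ leaves every bounded set. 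The membership condition $W=b$ nevertheless excludes such points, since points far outside $\conv B_j$ carry no zeros by Gauss--Lucas. No further analysis is required.
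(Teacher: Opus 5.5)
Your proposal is correct and is essentially the paper's own proof. You realize each $(\eta_j,B_j)$ by an integer-multiplicity array with at least one root at every point of $B_j$, take a diagonal member $G_j$ whose root measure, pooled zero set and (for the second claim) derivative zero set at ratio near $t_j$ are within $1/j$ of the corresponding objects, and then apply Theorem~\ref{thm:finite-cluster-universality} to the diagonal sequence with data $(\eta,B)$. Your closing paragraph on boundedness is unnecessary but harmless: the Hausdorff triangle inequality needs no common compact set, and in any case $K_{\eta_j,B_j}\subseteq\conv B_j$ by Gauss--Lucas applied through the realizing arrays.
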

\begin{proof}
For each $j$, approximate $\eta_j$ by normalized integer multiplicities
at the points of $B_j$, assigning at least one root to each point,
including those of zero $\eta_j$-mass.
Theorem~\ref{thm:finite-cluster-universality} applies as the degree
increases. Choose an approximating polynomial $G_j$ of degree
$L_j\to\infty$ whose root measure is within $1/j$ of $\eta_j$ in a
weak metric and whose pooled zero set is within $1/j$ of
$K_{\eta_j,B_j}$. For the second assertion, also require that the zero
set of its derivative of order $k_j=\lfloor t_jL_j\rfloor$ be within
$1/j$ of $S_{t_j,\eta_j}\cup O_{t_j}(\eta_j,B_j)$.

The root sets of $G_j$ converge to $B$ and their root measures to
$\eta$. Apply the same theorem to this diagonal sequence. All measures
and sets have a common compact root hull, and the two conclusions
follow. For the first assertion alone, only the pooled approximation is
needed.
\end{proof}

\begin{example}[One positive mass and finitely many extra locations]
\label{ex:vanishing-root}
For $\eta=\delta_a$, direct calculation gives
\[
 \mu_{t,\eta}=\delta_a,\qquad S_\eta=\{a\},\qquad
 W_{t,\eta}(z)=\frac{z-ta}{1-t},\qquad
 \theta_b(t)=(1-t)b+ta.
\]
Every $b\ne a$ is selected at every interior ratio. Thus for a finite
set $B$ containing $a$,
\begin{equation}\label{eq:one-mass-star}
 K_{\delta_a,B}=\bigcup_{b\in B}[a,b],
\end{equation}
where $[a,b]=\{(1-u)a+ub:0\le u\le1\}$. The additional star has zero
limiting pooled mass.

The introductory example $H_D(z)=z^{D-1}(z-1)$ is the case
$a=0$, $B=\{0,1\}$.
\end{example}

\begin{corollary}[Fixed polynomial factors]\label{cor:fixed-factor}
Let $P,Q$ be nonzero polynomials, with $\deg P=d\ge1$, and put
$H_n=P^nQ$. After monic normalization, all conclusions of
Theorem~\ref{thm:finite-cluster-universality} and
Corollary~\ref{cor:cluster-entire-shadows} hold with
\[
 \eta=\rootmu_P,\qquad B=Z(P)\cup Z(Q).
\]
For $b\in Z(Q)\setminus Z(P)$ with $P'(b)\ne0$, the transport is
\[
 \theta_b(t)=b-dt\frac{P(b)}{P'(b)}.
\]
Its selected zeros retain the multiplicity of $b$ in $Q$ while they
are separated from the fixed-ratio support. If
$Z(Q)\subseteq Z(P)\cup Z(P')$, the pooled set limit is $\Sh_P$.
\end{corollary}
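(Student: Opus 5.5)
The plan is to reduce everything to Theorem~\ref{thm:finite-cluster-universality} and Corollary~\ref{cor:cluster-entire-shadows}, viewing $(H_n)_n$ with $H_n=P^nQ$ as an array. After dividing by the leading coefficients of $P^nQ$ we get monic polynomials of degree $D_n=dn+\deg Q\to\infty$.

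First we check the hypotheses \eqref{eq:finite-location-input}. The divisor of $H_n$ is $n\divz P+\divz Q$. The normalized measure therefore differs from $\rootmu_P$ by $O(1/n)$ in total variation, which gives $D_n^{-1}\divz H_n\Longrightarrow\rootmu_P=\eta$. The set $Z(H_n)=Z(P)\cup Z(Q)$ is constant, so it trivially converges to $B=Z(P)\cup Z(Q)$. We need $A=Z(P)\subseteq B$, which holds, with positive masses $r_j/d$. If $Q$ is constant, then $B=A$ and nothing new appears. All conclusions of the theorem and of Corollary~\ref{cor:cluster-entire-shadows} then follow verbatim.

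Next comes the transport formula. For $b\in Z(Q)\setminus Z(P)$ we have $C_\eta(b)=P'(b)/(dP(b))$. This is nonzero exactly when $P'(b)\ne0$, and then \eqref{eq:transport-path} gives $\theta_b(t)=b-t/C_\eta(b)=b-dt\,P(b)/P'(b)$. This agrees with \eqref{eq:polynomial-weighted-identification}.

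For the multiplicity statement we apply \eqref{eq:outlier-count}. Take a small disk $\mathcal D_b$ about $b$ with $\mathcal D_b\cap B=\{b\}$. Then $(\divz H_n)(\mathcal D_b)=\operatorname{ord}_bQ$, because $P(b)\ne0$. So whenever $\theta_b(t)\in O_t(\eta,B)$, meaning it is selected and lies off $S_{t,\eta}$, a small disk about it contains exactly $\operatorname{ord}_bQ$ zeros of $H_n^{(m_n)}$ for large $n$. That is the claimed retention of multiplicity.

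For the last assertion, assume $Z(Q)\subseteq Z(P)\cup Z(P')$. Then every $b\in B\setminus A$ satisfies $P'(b)=0$, so $C_\eta(b)=0$. No transport path is defined, every $O_t(\eta,B)$ is empty, and $K_{\eta,B}=S_\eta\cup B$. We need $B\subseteq S_\eta$. Points of $A$ lie in $S_{t,\eta}$ for small $t$ by Proposition~\ref{prop:weighted-foundations}. Points of $B\setminus A$ are zeros of $C_\eta$, hence lie in $A\cup Z(C_\eta)$, which is the limit of $S_{t,\eta}$ as $t\downarrow0$ by \eqref{eq:weighted-endpoints}; they are therefore in the closure $S_\eta$. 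Finally $S_\eta=\Sh_P$ by \eqref{eq:polynomial-weighted-identification}.

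The only mildly delicate point is the normalization remark. We must confirm that scalar multiples change neither zero sets nor the derivative zero sets. Beyond that nothing is hard: the argument is bookkeeping over the earlier theorem.
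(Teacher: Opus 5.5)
Your proposal is correct and follows the paper's own proof. You verify the array hypotheses (the root measures of $P^nQ$ tend to $\rootmu_P$ and the root sets equal $B$), read off the transport and the retained multiplicity from \eqref{eq:transport-path} and \eqref{eq:outlier-count}, and for the last assertion use $C_{\rootmu_P}(b)=0$ with \eqref{eq:weighted-endpoints} and \eqref{eq:polynomial-weighted-identification} to place the extra locations in $\Sh_P$; you simply spell out the bookkeeping that the paper leaves implicit.
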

\begin{proof}
The normalized root measures of $P^nQ$ tend to $\rootmu_P$ and their root sets
equal $B$, so the stated results apply. For the final assertion,
each additional location has $C_{\rootmu_P}(b)=0$ and hence no
transport path; it already lies in $\Sh_P$ by
\eqref{eq:weighted-endpoints} and
\eqref{eq:polynomial-weighted-identification}.
\end{proof}

The weak limit depends only on $\eta$. The complete set limit depends
on $(\eta,B)$, through the selected paths in
\eqref{eq:complete-weighted-shadow}.
\section{Definability and finite topology}
\label{sec:tame}

The complete cluster limits describe the limiting zero sets. To build
loops in the core of a fixed polynomial $P$, we also need continuous
paths through its \emph{support family}
$\{\FixSh_{t,P}\}_{0<t<1}$ of fixed-ratio supports. We obtain them from
definability in the o-minimal structure $\Rexp$, and at the same time
bound the topology and boundary length in terms of the number of root
locations.

\subsection{The real exponential structure}
Write
$\Rexp=(\R;<,+,\cdot,\exp)$.
A set is \emph{definable} in $\Rexp$ if it is described by a finite
formula using real constants, order, the field operations, the
exponential function, and quantification over real variables. A function
is definable when its graph is. We allow real parameters throughout.
An expansion of the
ordered real field is \emph{o-minimal} if every definable subset of
$\R$ is a finite union of points and intervals. The real exponential
field is o-minimal, and we use the following consequences
\cite{Wilkie,vdD}.

Definable sets are closed under projection, complement, and closure. They are also closed
under \emph{fiberwise closure}: for a family with parameters, one may take the closure
separately in each parameter fiber. Connected components in a definable family form a
definable family, and their number is uniformly bounded.

We recall the geometric terminology used below. The \emph{rank} of a
smooth map on a manifold is the rank of its real differential on the
tangent space. A finite $C^k$ \emph{stratification} of a set $E$ is a
finite partition into disjoint $C^k$ submanifolds, called strata, with
the frontier condition: the closure of each stratum \emph{relative to
$E$} is a union of strata, and its frontier consists of strata of
smaller dimension. A \emph{cell decomposition} is a finite compatible
partition into definable pieces homeomorphic to open boxes of their
respective dimensions, with a point as a zero-dimensional cell.
Compatibility means that each of the specified sets is a union of
pieces. The dimension of a definable set is the largest dimension of
one of its strata.

Definable maps admit stratifications of constant rank. A definable
family also admits a finite \emph{topological trivialization}: its
parameter space can be partitioned into finitely many definable sets
over each of which the fibers are identified by definable
homeomorphisms preserving the parameter. Finally, a compact definable
set is \emph{triangulable}, meaning that it is homeomorphic to a finite
simplicial complex. A finite simplicial complex is a finite collection
of Euclidean simplices closed under taking faces, with intersections
that are common faces. These results are used only in their
qualitative form.

\begin{theorem}[Definability of the barrier and the support families]
\label{thm:definable}
For monic degree-$d$ polynomials, the normalized barrier
$\Blev_{\rootmu_P}(z,t)$ is $\Rexp$-definable on
$P(z)\ne0$, $0<t<1$. The families
\[
 \{(P,t,z):0<t<1,\ z\in\FixSh_{t,P}\},
 \qquad \{(P,z):z\in\Sh_P\}
\]
are definable as well. The same assertions hold for the weighted
barrier $\Blev_\eta$ and supports $S_{t,\eta}$ and $S_\eta$ when the
number of atoms is fixed and their distinct locations and positive
masses vary.
\end{theorem}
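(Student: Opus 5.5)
The approach is to express every object in the statement by a first-order formula over $\Rexp$, using only quantifiers over real and complex coordinates. The first-order formulas never quantify over curves. The landscape $L_{\eta,z,t}(w)=\sum_a p_a\log|w-a|-t\log|w-z|$ is definable jointly in $(\text{locations},\text{masses},z,t,w)$. This holds because $\log$ on $(0,\infty)$ is definable as the inverse of $\exp$, $|\cdot|$ is semialgebraic, and products with real parameters are allowed. For polynomial data one could write $L$ through $\frac1d\log|P(w)|$, which is definable in the coefficients of $P$. This avoids choosing the roots as functions of $P$, which is possible only locally.

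\emph{Step 1: the barrier.} I will use the definition \eqref{eq:weighted-barrier-definition} directly, not the min--max identity \eqref{eq:weighted-minmax}. First transport $E_\ell(\eta,z,t)$ to a subset of $\R^2$ via a semialgebraic chart of the Riemann sphere, for example stereographic coordinates on the unit sphere in $\R^3$. The chart sends $\infty$ to a point, and $z$ is adjoined as a point. Since $L\to+\infty$ at both positive ends and $L\to-\infty$ at $A$, the set $E_\ell$ is closed in the sphere. It is a fiber of a definable family with parameters $(\eta,z,t,\ell)$.

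By the o-minimal facts recalled above, the connected components of the fibers form a definable family. Hence the relation ``$z$ and $\infty$ lie in the same component of $E_\ell$'' is definable in $(\eta,z,t,\ell)$. An alternative route uses the fact that definable connectedness coincides with connectedness by definable arcs, and that one may quantify over arcs in a uniformly definable family. Then $\Blev_\eta(z,t)$ is the supremum of a definable set of reals. That supremum is finite by Proposition~\ref{prop:weighted-contours}, so it is definable. This settles the barrier, and it is also the step where most care is needed: one must check that passing to the sphere keeps $E_\ell$ inside a single definable family, uniformly in the parameters.

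\emph{Step 2: the fixed-ratio supports.} By \eqref{eq:weighted-potential}, off $A$ the barrier equals $(1-t)U_{\mu_{t,\eta}}+\mathfrak h(t)$. A point $z\notin A$ lies outside $S_{t,\eta}$ exactly when $\Blev_\eta(\cdot,t)$ is harmonic on some disk about $z$. Indeed, harmonicity on a disk means $\mu_{t,\eta}$ vanishes there because $\Delta U=2\pi\mu$. Conversely, $U_{\mu_{t,\eta}}$ is harmonic off the support.

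The harmonicity condition can be written first-order. Partial derivatives of a definable function are again definable, being given by limits of difference quotients with quantified $\varepsilon,\delta$. So ``$\Blev(\cdot,t)$ is $C^2$ on the disk of radius $r$ about $z$ and $\Delta\Blev=0$ there, for some $r>0$'' is a definable condition. A $C^2$ function with zero Laplacian is harmonic, and this is all the criterion requires.

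For the finitely many atom locations I will use Proposition~\ref{prop:weighted-foundations}. The point $a\in A$ belongs to $S_{t,\eta}$ if $t\le p_a$. Since $S_{t,\eta}$ is closed and carries no atom at $a$ when $t>p_a$, for $t>p_a$ the point $a$ belongs to $S_{t,\eta}$ exactly when it lies in the closure of $S_{t,\eta}\setminus A$. All of these conditions are definable. For polynomial data, ``$z$ is a root of $P$ and $t\le p_a$'' reads: there is $k\ge\lceil td\rceil$ with $P(z)=\cdots=P^{(k-1)}(z)=0$. This is a finite disjunction over $k\le d$, so it is semialgebraic in $(P,z,t)$.

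\emph{Step 3: the core and the weighted version.} The family $\{(P,t,z):z\in\FixSh_{t,P}\}$ is obtained from Step 2. Its projection forgetting $t$ is definable. Taking the closure in each $P$-fiber then gives $\{(P,z):z\in\Sh_P\}$, by \eqref{eq:weighted-shadows} and \eqref{eq:polynomial-weighted-identification}; fiberwise closure preserves definability. The weighted statements follow from the same formulas with $(a_1,\dots,a_s,p_1,\dots,p_s)$ as parameters. Here the number of atoms $s$ is fixed, the locations are restricted to the semialgebraic set where they are pairwise distinct, and the masses are positive and sum to one.
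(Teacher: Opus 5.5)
Your proposal is correct and follows essentially the same route as the paper. The barrier is the supremum of the levels at which $z$ and $\infty$ lie in one fiberwise definable component of $E_\ell$ in sphere coordinates. The supports off $A$ come from the definable ``$C^2$ with zero Laplacian on a root-free disk'' criterion via \eqref{eq:weighted-potential}, and the core is obtained by projection over $t$ followed by fiberwise closure. The only difference is at the atoms, and it is cosmetic: you include $a$ directly for $t\le p_a$ using the threshold result of Proposition~\ref{prop:weighted-foundations}, whereas the paper adds only the atoms with $p_a>t$ and lets the fiberwise closure recover the threshold case $t=p_a$.
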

\begin{proof}
The logarithm is definable on $(0,\infty)$ as the inverse of the
exponential. Thus the weighted landscape and its superlevel sets in
\eqref{eq:weighted-barrier-definition} form definable families. The
two positive ends are marked points in sphere coordinates. The
condition that they belong to the same fiberwise connected component
is definable. Taking the supremum of the levels for which this
condition holds gives $\Blev_\eta$.

The supports $S_{t,\eta}$ and $S_\eta$ can be recovered without
integration in $t$. For a
definable function, the existence and continuity of partial derivatives
are definable conditions: both can be expressed by real-variable
limit formulas. It is therefore definable that
$\Blev_\eta(\cdot,t)$ be $C^2$ with zero Laplacian on a given root-free
open disk. By \eqref{eq:weighted-potential}, this is precisely the
condition that the disk be disjoint from $S_{t,\eta}$. Taking the
complement of the union of such disks gives the support off the atoms.
Take its fiberwise closure and add every atom with $p_a>t$, as
prescribed by \eqref{eq:weighted-atoms}. An atom with no atomic mass
can belong to the support only as a limit of other support points, so
this operation gives the entire $S_{t,\eta}$. Projection over
$0<t<1$ and fiberwise closure give $S_\eta$.

For polynomial coefficients, one can either use the normalized
landscape $d^{-1}\log|P(w)|-t\log|w-z|$ directly or pass to the
finitely many multiplicity patterns. Root locations and their
multiplicities are definable from $P$ and its first $d$ derivatives.
The condition $p_a>t$ becomes $\operatorname{ord}_a(P)>dt$.
The identities $\FixSh_{t,P}=S_{t,\rootmu_P}$ and
$\Sh_P=S_{\rootmu_P}$ complete the proof.
\end{proof}

\begin{theorem}[Complexity controlled by the number of distinct roots]
\label{thm:atomic-complexity}
For each integer $s\ge1$, the family of limiting cores $S_\eta$
for probability measures with at most $s$ atoms is $\Rexp$-definable
in the atom locations and positive masses. Its members have only finitely
many homeomorphism types. In particular their Betti numbers (the dimensions of
their rational homology groups) and numbers of complementary components are bounded in terms of $s$ alone.

There is an integer $M_s\ge1$ such that each boundary $\partial S_\eta$
is the union of at most $M_s$ points and at most $M_s$ bounded $C^1$
arcs on each of which both Cartesian coordinates are monotone.
Consequently, if $\conv(\supp\eta)$ lies in a disk of radius $R$, then
\begin{equation}\label{eq:root-number-length}
 \Haus^1(\partial S_\eta)\le4M_sR.
\end{equation}
For a polynomial with at most $s$ distinct roots the same bound holds
for $\partial\AllSh_P$. Thus these topology and length bounds are
independent of the degree and of the root multiplicities. The topology
bounds concern the limiting core; only the length assertion extends
here to the full-shadow boundary.
\end{theorem}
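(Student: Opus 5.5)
The plan is to parametrize all measures with at most $s$ atoms by finitely many definable families and then apply the o-minimal consequences already recalled. For each $1\le r\le s$, the parameter space is the set of tuples $(a_1,\dots,a_r,p_1,\dots,p_r)$ with distinct $a_i\in\C$, $p_i>0$ and $\sum p_i=1$. This set is semialgebraic, hence definable. By Theorem~\ref{thm:definable}, the family $\{(\text{params},z):z\in S_\eta\}$ is definable over it. Taking the union over $r\le s$ gives one definable family whose fibers are exactly the limiting cores in question, which proves the first assertion.

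For the finiteness of homeomorphism types, I would apply definable topological trivialization to this family. The parameter space splits into finitely many definable pieces, and over each piece the fibers are definably homeomorphic. Each $S_\eta$ is compact (Proposition~\ref{prop:weighted-foundations}), so it is triangulable, and the homeomorphism type is constant on each piece. Hence the Betti numbers are bounded by their maximum over finitely many representatives. For the complementary components I would apply the same trivialization to the family $\C\setminus S_\eta$, or equivalently use the uniform bound on the number of connected components in a definable family. Either route yields a bound depending only on $s$.

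For the boundary, I would start from the definable family $\{(\text{params},z):z\in\partial S_\eta\}$, obtained by fiberwise closure and complement. Its fibers are compact and have empty interior in $\C$, so they have dimension at most $1$. Next I would take a cell decomposition of the total space compatible with this family. It should also be compatible with the definable sets on which $\partial_x$ and $\partial_y$ of the curve parametrization change sign, equivalently with a decomposition adapted to the coordinate projections $z\mapsto\Ree z$ and $z\mapsto\Imm z$. Uniform finiteness then gives an $M_s$ such that each fiber is a union of at most $M_s$ points and at most $M_s$ $C^1$ arcs, with each arc monotone in both coordinates. To get monotonicity, I would apply the monotonicity theorem to each one-dimensional cell, viewed as a graph over an interval in $x$ or in $y$, and refine so that the other coordinate is monotone as well. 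The uniform bound on the number of pieces comes from a cell decomposition of the whole family, whose fibers inherit the decomposition; this uniformity step is the main technical point, but it is standard. The length bound then follows from coordinate monotonicity. An arc monotone in both coordinates inside a disk of radius $R$ has length at most the sum of its $x$-extent and $y$-extent, which is at most $4R$; the arcs lie in $\conv(\supp\eta)\subseteq$ the disk. Summing over at most $M_s$ arcs gives \eqref{eq:root-number-length}.

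Finally, for a polynomial $P$ with at most $s$ distinct roots, \eqref{eq:shadow-interior-area} gives $\partial\AllSh_P=\partial\Sh_P\cup T_P$, and $T_P$ is countable, so $\Haus^1(T_P)=0$. Together with $\Sh_P=S_{\rootmu_P}$ from \eqref{eq:polynomial-weighted-identification}, this gives the same length bound for $\partial\AllSh_P$. The bound is independent of the degree and the multiplicities because only $s$ enters the parametrization.
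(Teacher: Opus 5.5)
Your proposal is correct and follows essentially the same route as the paper. You obtain the definable family over the finitely many semialgebraic parameter spaces from Theorem~\ref{thm:definable}, use definable trivialization and uniform component finiteness on the complements for the topology bounds, refine a parameter-compatible cell decomposition of the boundary family by the sign of $\partial_x f$ to get a uniform $M_s$ and the $4R$ arc bound, and use countability of $T_P$ to pass to $\partial\AllSh_P$.
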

\begin{proof}
For exactly $r\le s$ atoms, use the real and imaginary parts of
their locations and the positive parameters $p_1,\ldots,p_r$ with sum
one. Theorem~\ref{thm:definable} gives a definable family of cores.
Taking the finite union over $r\le s$ and applying definable
topological triviality proves the assertions about homeomorphism
types and Betti numbers. Applied to the family of complements,
component finiteness gives the uniform complementary-component bound.

To obtain a uniform length bound, apply cell decomposition to the
entire boundary family, compatibly with projection to parameter space. Its
one-dimensional fibers decompose into a uniformly bounded number of
vertical arcs and graphs $y=f(x)$ on which $f$ is $C^1$.
Refine the graph pieces by the sign of $\partial_xf$ and use definable
monotonicity. Their number, and the number of point fibers left over,
is bounded by one integer $M_s$, uniformly over all atom parameters.
Both coordinates are monotone on each resulting arc. In a disk of
radius $R$, its length is at most the sum of the two coordinate
variations, hence at most $4R$. Summing proves
\eqref{eq:root-number-length}. Arc endpoints can be adjoined without
changing the length. Finally,
Proposition~\ref{prop:shadow-foundations} shows that the full shadow adds
only the countable transient set $T_P$ to the core. Adding a countable
set does not change one-dimensional Hausdorff measure.
\end{proof}

\begin{remark}
The bounds depend on the number of distinct atoms, not on polynomial
degree or multiplicity. They concern topology and length: they do not
assert algebraicity of the boundary for irrational masses, nor an
algebraic-degree bound depending only on $s$.
\end{remark}

\begin{corollary}[Finite topology]\label{cor:tame}
In each fixed degree, limiting cores have only finitely many homeomorphism types. Their
Betti numbers are bounded by a degree-dependent constant. Each $\Sh_P$ is homeomorphic to a finite simplicial complex of
dimension at most two, has finitely many connected components and finitely many
complementary components, and satisfies
\[
 \Haus^1(\partial\Sh_P)<\infty.
\]
For fixed $P$, the topology of $\FixSh_{t,P}$ changes at only finitely many ratios
$t\in(0,1)$.
\end{corollary}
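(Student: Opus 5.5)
The plan is to specialise Theorems~\ref{thm:definable} and~\ref{thm:atomic-complexity} to fixed degree $d$ and then apply a few standard o-minimal consequences. A monic degree-$d$ polynomial has at most $d$ distinct roots. Hence $\Sh_P=S_{\rootmu_P}$ is a member of the family of limiting cores of measures with at most $s=d$ atoms, by \eqref{eq:polynomial-weighted-identification}. Theorem~\ref{thm:atomic-complexity} then gives finitely many homeomorphism types, degree-dependent Betti bounds, and a uniform bound on the number of complementary components. Alternatively, Theorem~\ref{thm:definable} makes $\{(P,z):z\in\Sh_P\}$ a definable family over $\C^d$, and definable topological trivialization gives the same conclusions directly. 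The length bound follows from \eqref{eq:root-number-length}, since $\conv Z(P)$ lies in a disk of finite radius; so $\Haus^1(\partial\Sh_P)\le 4M_dR<\infty$.

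For the finer statements about a single $\Sh_P$, I would use the qualitative o-minimal facts recalled above. The set $\Sh_P$ is compact (Proposition~\ref{prop:shadow-foundations}) and definable, so it is triangulable, i.e.\ homeomorphic to a finite simplicial complex. That complex has dimension at most two because $\Sh_P\subset\C\cong\R^2$, and definable homeomorphisms preserve dimension. Connected components and complementary components are finite because definable sets have finitely many connected components; apply this to $\Sh_P$ and to $\C\setminus\Sh_P$. (Connectedness is also available from Proposition~\ref{prop:shadow-foundations}, but it is not needed here.)

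For the last assertion, fix $P$ and consider the definable family $\{(t,z):0<t<1,\ z\in\FixSh_{t,P}\}$, which is the fiber of the first family in Theorem~\ref{thm:definable} over the fixed $P$. Definable trivialization partitions $(0,1)$ into finitely many definable sets, and over each of them the fibers $\FixSh_{t,P}$ are mutually homeomorphic. By o-minimality each of these sets is a finite union of points and open intervals. Hence there are finitely many exceptional ratios, and between consecutive ones the homeomorphism type is constant. This is what is meant by saying that the topology changes at only finitely many ratios.

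I expect the only delicate point to be bookkeeping rather than substance. The family must be indexed by the multiplicity pattern so that the masses $p_a=r_a/d$ are genuine parameters. The proof of Theorem~\ref{thm:definable} already handles this, either through the normalized landscape $d^{-1}\log|P(w)|-t\log|w-z|$ or through the finitely many multiplicity patterns. I would also check that the trivialization result is being applied to the fibered family over $t$ for fixed $P$, not to the family over $P$.
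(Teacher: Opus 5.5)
Your proposal is correct and follows essentially the same route as the paper: definable trivialization of the families in Theorem~\ref{thm:definable} (or, as you do first, their specialization in Theorem~\ref{thm:atomic-complexity} with $s=d$), triangulation of each compact core, finiteness of components of definable sets, and trivialization over $(0,1)$ for the ratio assertion. The only cosmetic difference is that you obtain $\Haus^1(\partial\Sh_P)<\infty$ from the uniform bound \eqref{eq:root-number-length}, whereas the paper reruns the underlying monotone-arc subdivision directly for the single set $\partial\Sh_P$.
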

\begin{proof}
Apply definable topological triviality to the two families in Theorem~\ref{thm:definable},
and compact triangulation to each limiting core. The boundary of a definable planar set
has dimension at most one, so it is a finite union of points and bounded definable $C^1$
arcs. Subdivide each parametrized arc until both coordinate functions are monotone. Its
length is then at most the sum of the variations of those coordinates and is therefore
finite. The assertion about the ratio parameter follows from trivialization over the
one-dimensional interval $(0,1)$.
\end{proof}

\subsection{The compact incidence space}
For fixed $P$, define its \emph{support incidence space} by
\begin{equation}\label{eq:incidence}
 \Inc_P=\overline{\{(t,z):0<t<1,\ z\in\FixSh_{t,P}\}}
 \subset[0,1]\times\conv Z(P),\qquad \prz(t,z)=z.
\end{equation}
A \emph{fiber} is the set of spatial points with a prescribed value of
$t$. The endpoint fibers are
$Z(P)\cup Z(C_{\rootmu_P})$ at $t=0$ and
$\{\mathfrak c_P\}$ at $t=1$, by
\eqref{eq:weighted-endpoints}. For $\deg P\ge2$, the lower fiber is
also $Z(P)\cup Z(P')$. The incidence space is compact and definable,
its fibers are Hausdorff-continuous on $[0,1]$, and
$\prz(\Inc_P)=\Sh_P$. Figure~\ref{fig:trails}(b) shows the analogous
lift of a complete zero set, with the zeros of $P_N^{(m)}$ placed at
height $t=m/D$.

\subsection{Transport of vanishing-mass clusters}

\begin{proposition}[Only finitely many straight transport segments]
\label{prop:transport-segments}
For the data $(\eta,B)$ in \eqref{eq:finite-location-input}, the complete limiting set
$K_{\eta,B}$ is the union of $S_\eta$ and finitely many closed straight segments. Each added
segment has at least one endpoint in $S_\eta$. In particular,
\begin{equation}\label{eq:transport-body}
 \operatorname{int}K_{\eta,B}=\operatorname{int}S_\eta,
 \qquad \Area(K_{\eta,B})=\Area(S_\eta).
\end{equation}
A directly specified enclosure, requiring no saddle selection, is
\begin{equation}\label{eq:transport-enclosure}
 K_{\eta,B}\subseteq S_\eta\cup B\cup
 \bigcup_{\substack{b\in B\setminus A\\ C_\eta(b)\ne0}}
       [b,b-C_\eta(b)^{-1}].
\end{equation}
The statement allows arbitrary real weights and requires no
algebraicity assumption on $S_\eta$.
\end{proposition}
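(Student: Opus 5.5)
Since $B$ is finite and $S_\eta$ is compact, the whole task is to describe $\overline{\bigcup_{0<t<1}O_t(\eta,B)}$ one vanishing-mass location at a time. Fix $b\in B\setminus A$ with $C_\eta(b)\ne0$ and put
\[
 T_b=\{t\in(0,1):\theta_b(t)\in O_t(\eta,B)\}.
\]
By definition $\bigcup_tO_t$ is the finite union of the images $\theta_b(T_b)$. Each $\theta_b$ is an affine map $t\mapsto b-t/C_\eta(b)$ onto the segment $[b,b-C_\eta(b)^{-1}]$, which already gives the enclosure \eqref{eq:transport-enclosure}. The rest of the plan is to show that $T_b$ is a finite union of intervals, and that each interval can be extended to reach $S_\eta$ or $B$.

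\emph{Step 1: definability.} I will show that $T_b$ is a definable subset of $(0,1)$, for fixed $(\eta,b)$, in $\Rexp$. The condition $\theta_b(t)\notin S_{t,\eta}$ is definable by Theorem~\ref{thm:definable}. The selection condition $W_{t,\eta}(\theta_b(t))=b$ can be rephrased without integrals. By Proposition~\ref{prop:weighted-foundations}, the selected inverse at a point outside the support is the unique connecting saddle of the landscape. So the condition says that $b$ is an ordinary saddle of $L_{\eta,\theta_b(t),t}$ at height $\Blev_\eta(\theta_b(t),t)$ and lies in the closures of both strict-superlevel components containing $z$ and $\infty$. Every ingredient here is a definable family: the barrier is definable by Theorem~\ref{thm:definable}, and superlevel components, closures and membership are definable operations. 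O-minimality then makes $T_b$ a finite union of points and open intervals. Its image $\theta_b(T_b)$ is therefore a finite union of points and straight open segments, and its closure is a finite union of closed segments, some possibly degenerate. This already yields $\operatorname{int}K_{\eta,B}=\operatorname{int}S_\eta$ and equality of areas: a finite union of segments and points has empty interior and zero area, so it cannot change the interior of the closed set $S_\eta$ (Baire, or directly, since a disk minus finitely many segments is dense open).

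\emph{Step 2: endpoints.} I will show that each maximal interval $(t_1,t_2)\subseteq T_b$ with $t_1<t_2$ yields a segment with one endpoint in $S_\eta$. Isolated points of $T_b$ should be excluded, and I expect openness to follow from local persistence: at a selected exterior point, $W_{t,\eta}$ is holomorphic and real-analytic in $t$, and $\theta_b(t)$ stays off the support nearby. So $T_b$ is relatively open, and on an open interval the composition $W_{t,\eta}(\theta_b(t))$ is analytic, so the identity propagates. The lower end $t_1$ is the harder part. If $t_1=0$, the segment starts at $b\in B$, which is fine, but the statement asks for an endpoint in $S_\eta$. So the plan is to examine the upper end $t_2$. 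If $t_2<1$, I will argue that $\theta_b(t_2)\in S_{t_2,\eta}\subseteq S_\eta$. If not, the point would be exterior, $W_{t_2,\eta}$ would be holomorphic there, and by continuity $W_{t_2,\eta}(\theta_b(t_2))=b$, so $t_2\in T_b$, contradicting maximality together with openness. If $t_2=1$, the point $\theta_b(1)$ is a limit of selected points. Continuity of $W_{t,\eta}$ up to $t=1$ fails, so instead I would use the endpoint fact $S_{t,\eta}\to\{\mathfrak c_\eta\}$. The aim is to show that near $t=1$ the selected inverse is close to the identity rescaling, $W_{t,\eta}(z)\approx z+t(z-\mathfrak c_\eta)/(1-t)$, so it sends bounded points off to infinity rather than to $b$. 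That would make $t_2=1$ impossible unless $\theta_b(1)=\mathfrak c_\eta$.

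\emph{Main obstacle.} The hardest step is the case $t_1>0$, where selection begins in the interior. At that ratio either $\theta_b(t_1)\in S_{t_1,\eta}$, which puts that endpoint in $S_\eta$, or selection switches between saddles at the barrier in the exterior. I expect the second alternative cannot occur, because the exterior inverse is single-valued and continuous in $(t,z)$. Making this precise, using the real-analyticity in $t$ from Proposition~\ref{prop:weighted-foundations} along the path, is the step I would check most carefully. Once it is settled, every added segment has its far endpoint (and possibly both endpoints) in $S_\eta$, which completes the proof.
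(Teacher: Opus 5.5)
\paragraph{Overall.} Your skeleton matches the paper's. Definability of the selection set and o-minimality give finitely many intervals. An open--closed argument for the identity $W_{t,\eta}(\theta_b(t))=b$ off the support sends every interior right endpoint into $S_\eta$. The blow-up of $W_{t,\eta}(z)=z+t/((1-t)C_{\mu_{t,\eta}}(z))$ as $t\uparrow1$ handles a right endpoint at $1$. The paper instead defines its interval set with $\theta_b(t)\notin S_\eta$, and obtains definability of $W_{t,\eta}$ by differentiating the potential formula rather than through connecting saddles; these differences are immaterial.

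\paragraph{The gap.} There is a genuine gap in your first sentence. You set $B$ aside because it is finite, but $K_{\eta,B}$ contains $B$ by definition. The assertion that $K_{\eta,B}$ is $S_\eta$ plus finitely many segments, each with an endpoint in $S_\eta$, therefore requires every point of $B\setminus S_\eta$ to lie on one of your segments. Otherwise such a point is isolated in $K_{\eta,B}$: a degenerate ``segment'' with no endpoint in $S_\eta$. Your remark that for $t_1=0$ ``the segment starts at $b$'' presupposes that $T_b$ contains an interval $(0,\varepsilon)$, and you never prove this.

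\paragraph{The missing argument.} It is short.
\begin{enumerate}
\item If $b\in B\setminus S_\eta$, then $b\notin A$ and $C_\eta(b)\ne0$, because $A\cup Z(C_\eta)\subseteq S_\eta$ by the lower endpoint limit in \eqref{eq:weighted-endpoints}.
\item For small $t$, $\theta_b(t)$ stays in a fixed disk about $b$ disjoint from $S_\eta\supseteq S_{t,\eta}$.
\item Since $\mu_{t,\eta}\Longrightarrow\eta$ as $t\downarrow0$, we get $(1-t)C_{\mu_{t,\eta}}\to C_\eta$ uniformly on that disk, so $W_{t,\eta}(\theta_b(t))\to b$.
\item Because $F_{t,\eta}\to\operatorname{id}$ with derivatives near $b$, it is injective on a small disk about $b$. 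That disk eventually contains both $b$ and $W_{t,\eta}(\theta_b(t))$, which have the same image $\theta_b(t)$, so they coincide.
\end{enumerate}
Hence $(0,\varepsilon)\subseteq T_b$, and $b=\theta_b(0)$ is an endpoint of a segment whose other endpoint lies in $S_\eta$. Alternatively, connectedness of $K_{\eta,B}$ (Corollary~\ref{cor:weighted-connected}, whose proof does not use this proposition) rules out isolated points of $B$.

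\paragraph{Smaller points.}
\begin{enumerate}
\item Define $T_b$ by the selection condition $\theta_b(t)\notin S_{t,\eta}$ and $W_{t,\eta}(\theta_b(t))=b$, not by $\theta_b(t)\in O_t(\eta,B)$. Two paths can meet at a common ratio, where $\theta_b(t)$ lies in $O_t$ through the other location. That would create isolated points and break openness.
\item Openness at a single selected ratio needs local uniqueness of the inverse, not just analyticity. Since $F_{t,\eta}(b)=\theta_b(t)$ for every $t$ and $F_{t,\eta}'(b)\ne0$, the implicit function theorem identifies the nearby selected inverse with $b$. Analyticity only propagates the identity once it holds on an interval.
\item The step you flag as the main obstacle, $t_1>0$, is the same closedness argument you already used at $t_2<1$. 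It is also unnecessary, because the right endpoint already lies in $S_\eta$.
\end{enumerate}
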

\begin{proof}
Theorem~\ref{thm:definable} gives definability of the weighted
barrier and the supports $S_{t,\eta}$ and $S_\eta$. The
selected inverse $W_{t,\eta}$ is also definable. Away from $A$, it is
obtained by spatial differentiation of \eqref{eq:weighted-potential};
at points of $A$ outside the support, use its continuous extension.

For $b\in B\setminus A$ with $C_\eta(b)\ne0$, set
\[
 I_b:=\{t\in(0,1):\theta_b(t)\notin S_\eta,
                    \ W_{t,\eta}(\theta_b(t))=b\}.
\]
This is definable. It is open: at a selected point the inverse is simple, and the implicit
function theorem identifies its nearby continuation with the fixed solution $w=b$ of
$F_{t,\eta}(w)=\theta_b(t)$. O-minimality implies that $I_b$ is a finite union of open
intervals. On any component of
$\{t:\theta_b(t)\notin S_\eta\}$, the same equality is both open and closed, by local
persistence and continuity of the selected inverse. Thus every endpoint of an interval of
$I_b$ lying strictly between zero and one is mapped into $S_\eta$.

The same conclusion holds for a right endpoint equal to one. Otherwise its image has a
neighborhood disjoint from $S_\eta$, whereas \eqref{eq:weighted-endpoints} and
\eqref{eq:weighted-W} give $W_{t,\eta}(z)\to\infty$ uniformly on compact subsets of that
neighborhood as $t\uparrow1$, contradicting $W_{t,\eta}(\theta_b(t))=b$. If $b\notin
S_\eta$, then $C_\eta(b)\ne0$ by \eqref{eq:weighted-endpoints}. As $t\downarrow0$,
$W_{t,\eta}(z)\to z$ locally off $S_\eta$ and $F_{t,\eta}\to\operatorname{id}$ near $b$.
Uniqueness of the nearby inverse implies $(0,\varepsilon)\subset I_b$ for some $\varepsilon>0$.
Consequently every $b\in B\setminus S_\eta$ equals $\theta_b(0)$, the
image of the left endpoint $0$ of one of these interval closures.

It follows directly from \eqref{eq:complete-weighted-shadow} that
\[
 K_{\eta,B}=S_\eta\cup
       \bigcup_{\substack{b\in B\setminus A\\C_\eta(b)\ne0}}
       \theta_b(\overline{I_b}).
\]
Each interval closure has a right endpoint mapped into $S_\eta$, and its image is a closed
straight segment. This proves the finite-segment assertion and
\eqref{eq:transport-enclosure}. A finite union of segments has zero planar area and cannot
create an interior point outside the closed set $S_\eta$, giving \eqref{eq:transport-body}.
\end{proof}

In Figure~\ref{fig:cubic} the four selected segments end at points of
$S_\eta$, marked by circles.

\subsection{Continuous paths through the support family}

Loops in the core are assembled from paths $t\mapsto\gamma(t)\in\FixSh_{t,P}$
along increasing ratios. We use definable choice, finite discontinuities
of definable one-variable maps, bounded definable endpoint limits, and
definable cell decomposition \cite{vdD}.

\begin{lemma}[Continuous selection on an interval]
\label{lem:interval-selection}
Let $E\subset [t_0,t_1]\times\R^r$ be a compact definable set, and write
$E_t=\{x:(t,x)\in E\}$. Suppose its fibers are nonempty and
Hausdorff-continuous in $t$. Given $x_0\in E_{t_0}$, there is a continuous
definable function $\gamma:[t_0,t_1]\to\R^r$ such that
$\gamma(t_0)=x_0$ and $\gamma(t)\in E_t$ for every $t$.
\end{lemma}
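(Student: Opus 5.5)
The plan is to build $\gamma$ step by step in increasing $t$, using only finitely many definable pieces, so that the result is definable and continuous. First apply definable topological trivialization to the family $\{E_t\}$ over $[t_0,t_1]$. This gives finitely many breakpoints $t_0=c_0<c_1<\dots<c_N=t_1$. Over each open interval $J_i=(c_{i-1},c_i)$ there is a definable homeomorphism $h_i:J_i\times F_i\to E\cap(J_i\times\R^r)$ that preserves $t$. Consequently, any point $x\in E_s$ with $s\in J_i$ lies on a continuous definable section over all of $J_i$, namely $t\mapsto\prz h_i(t,f)$ with $f$ fixed by $h_i(s,f)=(s,x)$. Here $\prz$ means projection to the $\R^r$ factor.

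The key local step is to leave a given point $y\in E_c$ continuously toward larger $t$, for any $c<t_1$. By Hausdorff continuity of the fibers, there are points $x_t\in E_t$ with $t\downarrow c$ and $x_t\to y$. So $(c,y)$ lies in the closure of the definable set $E\cap\bigl((c,c+\varepsilon)\times\R^r\bigr)$ for every $\varepsilon>0$. The definable curve selection lemma then gives a continuous definable curve $\beta:[0,1)\to E$ with $\beta(0)=(c,y)$ and $\beta(s)$ having first coordinate $\tau(s)>c$ for $s>0$. The function $\tau$ is definable and nonconstant near $0$, so by monotonicity it is continuous and strictly increasing on some $[0,\delta]$. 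Reparametrizing $\beta$ by $t=\tau(s)$ gives a continuous definable $\gamma$ on $[c,c+\delta']$ with $\gamma(c)=y$ and $\gamma(t)\in E_t$. We choose $c+\delta'<c_i$ when $c=c_{i-1}$.

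Now assemble $\gamma$. Start at $x_0\in E_{c_0}$. Use the local step to reach some time $s_1\in J_1$, and then continue along the trivialized section over the rest of $J_1$. As $t\uparrow c_1$, this section is a bounded definable curve in the compact set $E$, since its coordinates are bounded definable functions of one variable. So it has a limit $y_1$, and $y_1\in E_{c_1}$ by closedness of $E$. Define $\gamma(c_1)=y_1$; continuity at $c_1$ from the left follows. Repeat from $y_1$: the local step again, then trivialization over $J_2$, and so on. After $N$ stages we reach $t_1$. The resulting $\gamma$ is a finite concatenation of definable continuous pieces agreeing at the junctions, so it is definable and continuous on $[t_0,t_1]$, with $\gamma(t_0)=x_0$.

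The main obstacle I expect is the departure from a prescribed point at a breakpoint. There, trivialization gives no information, and a naive choice (for example definable choice of some point of $E_t$) can jump. Curve selection, together with monotonicity of the time coordinate, is what forces the selection to start exactly at $y$. It is here, and only here, that lower semicontinuity of the fibers is used; closedness of $E$ handles the arrivals.
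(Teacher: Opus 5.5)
Your proof is correct and follows the same scheme as the paper: finitely many breakpoints in time, a local departure from the prescribed point that uses only $\dist(y,E_t)\to0$, continuation across each open interval along a section with fixed fiber coordinates, and arrival at the right endpoint through one-sided limits of bounded definable curves together with compactness of $E$. The differences are only in the tools. Where the paper uses a definable nearest-point choice and then discards its finitely many discontinuities, you use curve selection together with monotonicity of the time coordinate. Where the paper uses time-compatible cell box charts, which forces it to shrink the departure interval once more so that the departure lies in a single cell, you use Hardt trivialization.
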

\begin{proof}
If $t_0=t_1$, the constant section suffices. Otherwise choose a finite
definable cell decomposition of $E$ whose box charts make the time
coordinate either constant or one of the box coordinates. Together
with $t_0$ and $t_1$, the frontiers of the cell time images partition
$[t_0,t_1]$ into finitely many points and open intervals. Any cell meeting
one of these open intervals projects onto the entire interval.

Suppose a value $x_c\in E_c$ has been prescribed at the left endpoint
$c$ of one such interval $(c,e)$. Hausdorff continuity implies that
$(c,x_c)$ belongs to the closure of $E\cap((c,e)\times\R^r)$.
On $[c,e]$, compactness and definable choice give a definable map
$\gamma(t)\in E_t$ minimizing $\|\gamma(t)-x_c\|_\infty$.
In particular $\gamma(c)=x_c$, and Hausdorff continuity gives
$\gamma(t)\to x_c$ as $t\downarrow c$.
A definable curve has only finitely many discontinuities. Shortening
the interval therefore supplies a continuous definable section on
$[c,c+\delta]$ with the prescribed value at $c$.

The definable sets of times at which this local section lies in
each cell have finite frontiers. Shortening $\delta$ again, the
section lies in one cell for $c<t<c+\delta$.
Choose $c_1\in(c,c+\delta)$. In this cell's box chart, keep every
coordinate except time fixed at the inverse image of $\gamma(c_1)$.
For $t\ge c_1$ this continues the section across $(c,e)$, since
the cell projects onto that entire interval. Its spatial coordinates are bounded and
definable, so each has a limit as $t\uparrow e$.
Compactness places that limit in $E_e$. Use it as the
prescribed value for the next interval. Finitely many repetitions
complete the section. This proof does not assume that the fibers are
connected or convex.
\end{proof}

\begin{lemma}[Nested convex hulls]
\label{lem:nested-support-hulls}
For every finite positive atomic probability measure $\eta$ and
$0<t<t'<1$,
\[
 \conv S_{t',\eta}\subseteq\conv S_{t,\eta}.
\]
The centroid $\mathfrak c_\eta$ belongs to every one of these convex hulls.
For each fixed $\eta$ and $t_0\in(0,1)$, the family obtained by adjoining
$S_{1,\eta}=\{\mathfrak c_\eta\}$ is compact, definable, and
Hausdorff-continuous on $[t_0,1]$. In particular, every point of
$S_{t_0,\eta}$ can be joined to $\mathfrak c_\eta$ by a continuous path
$\gamma(t)\in S_{t,\eta}$, $t_0\le t\le1$.
\end{lemma}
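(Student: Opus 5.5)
The nesting of convex hulls will come from Gauss--Lucas applied to an integer approximation, transferred to the limit through the Hausdorff convergence of zero sets.

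Choose integer approximations $G_j=\prod_a(w-a)^{M_{a,j}}$ of degree $D_j$, as in the proof of Proposition~\ref{prop:weighted-foundations}, and orders $m_j/D_j\to t$, $m'_j/D_j\to t'$ with $m_j<m'_j$. Then $G_j^{(m'_j)}$ is a derivative of $G_j^{(m_j)}$, so Gauss--Lucas gives
\[
 Z(G_j^{(m'_j)})\subseteq\conv Z(G_j^{(m_j)}).
\]
By the zero-set convergence established in that proof (equivalently, by Theorem~\ref{thm:finite-cluster-universality} with $B=A$), these zero sets converge in Hausdorff distance to $S_{t',\eta}$ and $S_{t,\eta}$. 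Convex hulls are $1$-Lipschitz for Hausdorff distance, and the inclusion of compact sets is closed under Hausdorff limits. Hence
\[
 S_{t',\eta}\subseteq\conv S_{t,\eta},
 \qquad\text{and therefore}\qquad
 \conv S_{t',\eta}\subseteq\conv S_{t,\eta}.
\]

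For the centroid, fix $t$ and let $t'\uparrow1$. By \eqref{eq:weighted-endpoints}, points of $S_{t',\eta}$ converge to $\mathfrak c_\eta$, and $\conv S_{t,\eta}$ is closed, so $\mathfrak c_\eta\in\conv S_{t,\eta}$. An alternative is the exact identity: the roots of $G^{(m)}$ have the same centroid as the roots of $G$, and weak convergence \eqref{eq:finite-cluster-measure} preserves the barycenter under a common compact support. This shows that $\mathfrak c_\eta$ is the barycenter of $\mu_{t,\eta}$, which again places it in the hull.

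For the last assertions, set $E=\{(t,z):t_0\le t<1,\ z\in S_{t,\eta}\}\cup(\{1\}\times\{\mathfrak c_\eta\})$.
\begin{itemize}
 \item \emph{Definability.} The set $E$ is definable by Theorem~\ref{thm:definable}, with $\eta$ fixed as parameters, together with a single added point.
 \item \emph{Hausdorff continuity.} On $[t_0,1)$ it is Proposition~\ref{prop:weighted-foundations}. At $t=1$ it is the endpoint limit \eqref{eq:weighted-endpoints}.
 \item \emph{Compactness.} The fibers lie in $\conv A$. Closedness follows from Hausdorff continuity of fibers: a limit $(t,z)$ of points $(t_k,z_k)$ with $z_k\in S_{t_k,\eta}$ has $z\in S_{t,\eta}$, including $t=1$.
\end{itemize}
Lemma~\ref{lem:interval-selection} then produces the continuous path starting at any point of $S_{t_0,\eta}$. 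Its value at $t=1$ is forced to be $\mathfrak c_\eta$.

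The main obstacle is justifying the Hausdorff convergence $Z(G_j^{(m_j)})\to S_{t,\eta}$, in particular the upper estimate that excludes zeros away from the support. This is exactly the convergence already proved in Proposition~\ref{prop:weighted-foundations}, so I will cite it. Everything else is routine passage to limits.
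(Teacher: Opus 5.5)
Your proposal is correct and follows the paper's own proof. Like the paper, you apply Gauss--Lucas to integer approximations and pass to the fixed-ratio Hausdorff limits using nonexpansiveness of convex hulls; you obtain the centroid from the upper-endpoint limit; and you feed definability and Hausdorff continuity, including at $t=1$, into Lemma~\ref{lem:interval-selection}. Your explicit closedness check for the incidence set and your barycenter alternative for the centroid are both valid details that the paper leaves implicit.
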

\begin{proof}
Approximate $\eta$ by positive integer multiplicities at its atom
locations, with total degree $D_n\to\infty$. Apply Gauss--Lucas to
the derivatives at orders $\lfloor tD_n\rfloor$ and
$\lfloor t'D_n\rfloor$, and pass to their fixed-ratio Hausdorff limits.
Convex hull is nonexpansive in Hausdorff distance, giving the stated
inclusion for arbitrary positive real weights. Letting $t'\uparrow1$
and using the upper-endpoint limit places the centroid in every hull. Definability and continuity of the family
follow from Theorem~\ref{thm:definable} and
Proposition~\ref{prop:weighted-foundations}, including the endpoint
$t=1$. Apply Lemma~\ref{lem:interval-selection} for the path assertion.
\end{proof}

\begin{corollary}[Paths following increasing derivative ratios]
\label{cor:ratio-paths}
For every nonconstant polynomial $P$ and every $z\in\Sh_P$, there
is a $t_z\in[0,1]$ and a continuous definable path
$\gamma_z:[t_z,1]\to\Sh_P$ such that
\[
 \gamma_z(t_z)=z,\qquad \gamma_z(1)=\mathfrak c_P,\qquad
 \gamma_z(t)\in\FixSh_{t,P}\quad(t_z<t<1).
\]
The choices are not asserted to depend continuously on $z$.
\end{corollary}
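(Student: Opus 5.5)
Fix a nonconstant $P$ and $z\in\Sh_P$. By Proposition~\ref{prop:weighted-foundations} and \eqref{eq:polynomial-weighted-identification}, $\Sh_P=S_{\rootmu_P}=\overline{\bigcup_{0<t<1}\FixSh_{t,P}}$. Equivalently, with the incidence space $\Inc_P$ of \eqref{eq:incidence}, we have $\prz(\Inc_P)=\Sh_P$. So there is some $t_z\in[0,1]$ with $(t_z,z)\in\Inc_P$, and I would take such a $t_z$. If $P$ has a single distinct root, everything collapses to the point $\mathfrak c_P$, and the constant path works. So assume at least two distinct roots.

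I would apply Lemma~\ref{lem:interval-selection} to $E=\Inc_P\cap([t_z,1]\times\C)$, viewed in $\R^2$. Its hypotheses are already recorded in the paper:
\begin{enumerate}
\item[(i)] $E$ is compact and definable, since $\Inc_P$ is (Theorem~\ref{thm:definable} followed by fiberwise closure).
\item[(ii)] The fibers are nonempty, with endpoint fibers $Z(P)\cup Z(C_{\rootmu_P})$ at $t=0$ and $\{\mathfrak c_P\}$ at $t=1$.
\item[(iii)] The fibers are Hausdorff-continuous on $[0,1]$, by \eqref{eq:weighted-endpoints} and the Hausdorff continuity in $t$ from Proposition~\ref{prop:weighted-foundations}.
\end{enumerate}
Interior fibers of $\Inc_P$ must equal $\FixSh_{t,P}$ itself, not something larger. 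This holds because the closure of the graph of a Hausdorff-continuous family of compact sets adds nothing over interior $t$. Endpoint fibers are exactly the limits in \eqref{eq:weighted-endpoints}.

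The lemma then gives a continuous definable $\gamma:[t_z,1]\to\C$ with $\gamma(t_z)=z$ and $\gamma(t)\in E_t$ for every $t$. For $t_z<t<1$ this means $\gamma(t)\in\FixSh_{t,P}\subseteq\Sh_P$, and $\gamma(1)\in E_1=\{\mathfrak c_P\}$. By closedness of $\Sh_P$, the whole image lies in $\Sh_P$. This proves the corollary, including the case $t_z=0$. If $t_z=1$ then $z=\mathfrak c_P$ and the degenerate one-point path suffices. When $t_z\in(0,1)$, one could alternatively quote Lemma~\ref{lem:nested-support-hulls} directly.

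The only point needing care is the fiber identification $E_t=\FixSh_{t,P}$ for interior $t$ and at the endpoints. Lemma~\ref{lem:interval-selection} needs the fibers of the compact set to be Hausdorff-continuous, and the closure in \eqref{eq:incidence} might a priori enlarge fibers. I would settle this as follows. Suppose $(t_k,z_k)\to(t,w)$ with $z_k\in\FixSh_{t_k,P}$. Hausdorff continuity of $t\mapsto\FixSh_{t,P}$ gives $\dist(z_k,\FixSh_{t,P})\to0$, hence $w\in\FixSh_{t,P}$ by compactness. At the endpoints the same argument uses \eqref{eq:weighted-endpoints} instead. The rest is a direct invocation of existing results.
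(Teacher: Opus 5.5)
Your proposal is correct and follows the paper's proof. The paper likewise takes the incidence family $\Inc_P$, with interior fibers $\FixSh_{t,P}$ and endpoint fibers given by \eqref{eq:weighted-endpoints}, chooses $t_z$ with $z$ in the fiber over $t_z$ (using $\prz(\Inc_P)=\Sh_P$), and applies Lemma~\ref{lem:interval-selection} on $[t_z,1]$. Your explicit check that the closure in \eqref{eq:incidence} does not enlarge the fibers spells out what the paper asserts in one line, and your separate treatment of a single distinct root (the paper instead separates degree one) is harmless.
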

\begin{proof}
For $\deg P\ge2$, adjoin to the fixed-ratio family the endpoint
fibers $E_P(0)=Z(P)\cup Z(P')$ and $E_P(1)=\{\mathfrak c_P\}$,
and set $E_P(t)=\FixSh_{t,P}$ for $0<t<1$; these are the fibers of
$\Inc_P$. The endpoint limits \eqref{eq:weighted-endpoints}, the Hausdorff
continuity in Proposition~\ref{prop:weighted-foundations}, and
definability make this a compact Hausdorff-continuous definable family
on $[0,1]$. Its union is compact and, by \eqref{eq:weighted-shadows},
\eqref{eq:polynomial-weighted-identification}, and the endpoint limits,
equals $\Sh_P$. Hence $z\in E_P(t_z)$
for some $t_z$. Apply Lemma~\ref{lem:interval-selection} on $[t_z,1]$
with prescribed initial value $z$. The degree-one case is immediate.
\end{proof}

These paths lie in the core, not in a finite-level approximation.
They do not yield a contraction: that would require a continuous
choice jointly in the initial point and the path parameter, and the existence of individual paths provides no such joint choice.

\subsection{Connectedness with mass and location data}
\begin{corollary}[Connectedness for finite mass and location data]
\label{cor:weighted-connected}
The sets $S_\eta$ and $K_{\eta,B}$ are connected for every finite positive-mass measure
$\eta$ and every finite $B\supseteq\supp\eta$.
\end{corollary}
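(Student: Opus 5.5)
The plan is to use the continuous ratio paths of Lemma~\ref{lem:nested-support-hulls}, which reach the centroid as $t\uparrow1$, and then attach the transport segments of Proposition~\ref{prop:transport-segments} to $S_\eta$.

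\emph{Connectedness of $S_\eta$.} We use the description $S_\eta=\overline{\bigcup_{0<t<1}S_{t,\eta}}$ from \eqref{eq:weighted-shadows}. Fix a point $z\in S_{t_0,\eta}$ with $0<t_0<1$. Lemma~\ref{lem:nested-support-hulls} gives a continuous path $\gamma(t)\in S_{t,\eta}$ for $t_0\le t\le1$ with $\gamma(1)=\mathfrak c_\eta$. For $t<1$ each $\gamma(t)$ lies in $S_\eta$, and $\gamma(1)=\mathfrak c_\eta$ lies in $S_\eta$ because $S_\eta$ is closed and $S_{t,\eta}\to\{\mathfrak c_\eta\}$ by \eqref{eq:weighted-endpoints}. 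Hence the union $Y=\bigcup_{0<t<1}S_{t,\eta}\cup\{\mathfrak c_\eta\}$ is path connected: every point of it is joined to $\mathfrak c_\eta$ inside $Y\subseteq S_\eta$. The closure of a connected set is connected, and $S_\eta=\overline Y$, so $S_\eta$ is connected. This argument uses only the real-weight statements of Lemma~\ref{lem:nested-support-hulls}, so no integrality is needed. Alternatively, one could approximate $\eta$ by integer masses and quote the connectedness in Proposition~\ref{prop:shadow-foundations}, but Hausdorff limits of connected compacta are connected anyway, so the direct route is simpler.

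\emph{Connectedness of $K_{\eta,B}$.} By Proposition~\ref{prop:transport-segments}, $K_{\eta,B}$ is $S_\eta$ together with finitely many closed straight segments $\theta_b(\overline{I_b})$. Each segment has an endpoint in $S_\eta$; in fact its right endpoint is mapped into $S_\eta$. A segment is connected and meets the connected set $S_\eta$, so the union of $S_\eta$ with all these segments is connected.

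It remains to check that no point of $B$ is left isolated. Points of $B$ lying in $S_\eta$ are already covered. A point $b\in B\setminus S_\eta$ has $b\notin A$, since $A\subseteq S_\eta$ by \eqref{eq:weighted-endpoints}, and it has $C_\eta(b)\ne0$ for the same reason. The proof of Proposition~\ref{prop:transport-segments} then shows that $b=\theta_b(0)$ is the left endpoint of one of the segments. Therefore $b$ lies in a segment attached to $S_\eta$, and $K_{\eta,B}$ is connected.

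\emph{Main obstacle.} The only delicate point is the behavior at the endpoint $t=1$ in the path step. We need the adjoined family on $[t_0,1]$ to be Hausdorff-continuous so that Lemma~\ref{lem:interval-selection} applies, and this is exactly what Lemma~\ref{lem:nested-support-hulls} supplies. Everything else is point-set topology.
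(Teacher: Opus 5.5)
Your proof is correct, but it takes a different route from the paper.

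The paper argues by approximation. It places normalized positive integer multiplicities at every point of $B$, with at least one root at each zero-mass location. The cores of these polynomials are connected by Proposition~\ref{prop:shadow-foundations} and converge to $K_{\eta,B}$ by Corollary~\ref{cor:cluster-entire-shadows}. It then concludes because a Hausdorff limit of nonempty connected compacta is connected, and $S_\eta$ is the case $B=\supp\eta$. This is exactly the alternative you mention and set aside. Your remark there is muddled: connectedness of Hausdorff limits is what makes that route work, not a reason to avoid it.

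What each route buys:
\begin{itemize}
\item The paper's argument is two lines, treats both sets uniformly, and needs no definability. It does, however, import the connectedness theorem of the companion paper and the full complete-limit Theorem~\ref{thm:finite-cluster-universality}.
\item Your argument stays inside the weighted theory. It uses only Lemma~\ref{lem:nested-support-hulls} and Proposition~\ref{prop:transport-segments}, neither of which uses connectedness, so there is no circularity.
\item Your argument also gives more structure. The dense subset $\bigcup_{0<t<1}S_{t,\eta}\cup\{\mathfrak c_\eta\}$ is path connected via increasing-ratio paths to the centroid. And $K_{\eta,B}$ is $S_\eta$ with finitely many straight segments attached.
\item The cost is reliance on the o-minimal selection machinery of Section~\ref{sec:tame}.
\end{itemize}

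Your separate check that points of $B\setminus S_\eta$ lie on the segments is not needed. It is already contained in the proof of Proposition~\ref{prop:transport-segments}, whose formula $K_{\eta,B}=S_\eta\cup\bigcup_b\theta_b(\overline{I_b})$ absorbs $B$.
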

\begin{proof}
Approximate $\eta$ by normalized positive integer multiplicities at
all points of $B$, retaining at least one root at every zero-mass
location. Their polynomial cores are connected by
Proposition~\ref{prop:shadow-foundations} and converge to
$K_{\eta,B}$ by Corollary~\ref{cor:cluster-entire-shadows}.
A compact Hausdorff limit of nonempty connected sets is connected.
Taking $B=\supp\eta$ gives $S_\eta$.
\end{proof}
\section{Holes and the failure of contractibility}
\label{sec:holes}

We now turn the selected transport segments into holes of fixed
polynomial cores. The construction has two independent parts: paths in
the support family produce loops, and complete zero-set convergence
keeps specified points outside the full shadow. Both are needed; a
cycle in an auxiliary Hausdorff limit would not suffice.

A \emph{homotopy} of paths is a continuous deformation with their
endpoints fixed. For a continuous closed curve avoiding a point,
its winding number is $1/(2\pi)$ times the change in a continuous
argument along the curve. This extends the integral convention for
piecewise smooth curves; it is integer-valued and unchanged by a
homotopy avoiding the point.

A \emph{finite embedded graph} consists of finitely many vertices and
simple arcs that meet only at their endpoints. A \emph{cycle} is a
closed edge path with no repeated vertex except its first and last.
A \emph{tree} is a connected graph with no cycle; a \emph{spanning
tree} is a tree subgraph containing every vertex. Crossings are treated
as vertices by subdividing the arcs.

\subsection{Transport over two real roots}
The base measure is
\[
 \eta_0=\tfrac12(\delta_{-1}+\delta_1),\qquad
 C_{\eta_0}(w)=\frac{w}{w^2-1}.
\]
Its fixed-ratio supports lie in $[-1,1]$, by Gauss--Lucas and
polynomial approximation. Its core contains both
endpoints by \eqref{eq:weighted-atoms}. By the polynomial identification
in Proposition~\ref{prop:weighted-foundations}, it is the core of $z^2-1$,
which is connected by Proposition~\ref{prop:shadow-foundations}; hence
$S_{\eta_0}=[-1,1]$.

\begin{lemma}[Selected transport for the quadratic base]
\label{lem:quadratic-transports}
For $b=x+iy$ with $x,y>0$, set
\begin{equation}\label{eq:quadratic-transport-data}
 \tau_b=\frac{|b|^2}{1+|b|^2},\qquad
 \beta(b)=\frac{2x}{1+|b|^2},\qquad
 \theta_b(t)=(1-t)b+\frac{t}{b}.
\end{equation}
For $0<t<\tau_b$, the point $\theta_b(t)$ is in the upper half-plane,
outside $S_{t,\eta_0}$, and
$W_{t,\eta_0}(\theta_b(t))=b$. It lands at
$\beta(b)\in S_{\tau_b,\eta_0}$ when $t=\tau_b$, and $b$ is not
selected at its transported point for $t>\tau_b$. Thus, for any finite
set $B$ consisting of $-1,1$ and distinct points in the open first
quadrant,
\begin{equation}\label{eq:quadratic-transport-graph}
 K_{\eta_0,B}=[-1,1]\cup
     \bigcup_{b\in B\setminus\{-1,1\}}[\beta(b),b].
\end{equation}
\end{lemma}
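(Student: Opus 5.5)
The argument has three parts. First compute the transport path from $C_{\eta_0}$. Then identify $W_{t,\eta_0}$ off the real axis with an explicit Joukowski branch. Finally read off selection by comparing $|b|$ with the radius of the Joukowski circle.

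\textbf{Step 1: the path and its landing point.} Since $C_{\eta_0}(b)=b/(b^2-1)$, \eqref{eq:transport-path} gives $\theta_b(t)=b-t(b^2-1)/b=(1-t)b+t/b$. Its imaginary part is $y\bigl((1-t)-t/|b|^2\bigr)$. This is positive exactly for $t<\tau_b$, zero at $t=\tau_b$, and negative after. At $t=\tau_b$ we have $1-\tau_b=1/(1+|b|^2)$ and $\tau_b/|b|^2=1/(1+|b|^2)$, so $\theta_b(\tau_b)=2x/(1+|b|^2)=\beta(b)$. Note that $0<\beta(b)\le 2x/(1+x^2)\le1$.

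\textbf{Step 2: the selected inverse is the exterior Joukowski branch.} The saddle equation $F_{t,\eta_0}(w)=z$ reads $(1-t)w^2-zw+t=0$. Put $r_t=\sqrt{t/(1-t)}$ and $c_t=2\sqrt{t(1-t)}$. Then $z=(1-t)w+t/w$ is a Joukowski map. It sends $\{|w|>r_t\}$ biholomorphically onto $\C\setminus[-c_t,c_t]$, and the two roots have product $r_t^2$. Gauss--Lucas applied to integer approximations, as stated just before the lemma, gives $S_{t,\eta_0}\subseteq[-1,1]$. By \eqref{eq:weighted-W} and \eqref{eq:weighted-inverse}, $W_{t,\eta_0}$ is a holomorphic root of this quadratic on $\C\setminus[-1,1]$, with $W_{t,\eta_0}(z)\sim z/(1-t)$ at infinity, because $C_{\mu_{t,\eta_0}}(z)\sim1/z$. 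Since $c_t\le1$, the two roots are distinct throughout the connected domain $\C\setminus[-1,1]$. The root choice there is therefore fixed by its behaviour at infinity, so $W_{t,\eta_0}$ is the branch with $|W|>r_t$.

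The same observation gives $[-c_t,c_t]\subseteq S_{t,\eta_0}$. Approaching a point of $(-c_t,c_t)$ from above and from below, the exterior branch has complex-conjugate, distinct boundary values. So it admits no holomorphic continuation across that interval, whereas $W_{t,\eta_0}$ is holomorphic off the support by Proposition~\ref{prop:weighted-foundations}. Closedness of the support then gives the whole segment.

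\textbf{Step 3: selection and the graph.} For $0<t<1$ with $t\ne\tau_b$, the point $\theta_b(t)$ is nonreal, hence outside $S_{t,\eta_0}$. Its two preimages are $b$ and $r_t^2/b$. The point $b$ is the selected one iff $|b|>r_t$, that is, iff $|b|^2(1-t)>t$, which is exactly $t<\tau_b$. For $t>\tau_b$ the selected preimage is $r_t^2/b\ne b$, so $b$ is not selected. At $t=\tau_b$ we have $c_{\tau_b}=2|b|/(1+|b|^2)\ge\beta(b)$. Hence $\beta(b)\in[-c_{\tau_b},c_{\tau_b}]\subseteq S_{\tau_b,\eta_0}$.

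Finally insert this into \eqref{eq:complete-weighted-shadow}, using $S_{\eta_0}=[-1,1]$:
\begin{itemize}
\item the points $\pm1$ already lie in $[-1,1]$;
\item for each $b$, the selected set $O_t$ contributes $\theta_b((0,\tau_b))$;
\item since $t\mapsto\theta_b(t)$ is affine, the closure of $\theta_b((0,\tau_b))$ is the segment $[\beta(b),b]$.
\end{itemize}
This gives \eqref{eq:quadratic-transport-graph}.

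\textbf{Main obstacle.} The delicate point is Step 2: justifying that the selected inverse is the exterior branch, and that the support contains $[-c_t,c_t]$. I plan to do both purely from analyticity of $W_{t,\eta_0}$ off a real support, its asymptotics at infinity, and the absence of branch points off $[-1,1]$. This avoids computing $\mu_{t,\eta_0}$ explicitly. If the monodromy argument across the cut proves awkward, the fallback is to compute $C_{\mu_{t,\eta_0}}=t/((1-t)(W-z))$ explicitly and invert it by Stieltjes inversion.
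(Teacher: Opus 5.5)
Your proof is correct, but it settles selection and landing by a different mechanism from the paper's.

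\textbf{The paper's route.} It never identifies $W_{t,\eta_0}$ globally. Because $\mu_{t,\eta_0}$ lives on $\R$, its Cauchy transform has negative imaginary part in the upper half-plane. By \eqref{eq:weighted-W}, the selected inverse therefore maps each open half-plane into itself. The competing root $t/((1-t)b)$ lies in the lower half-plane, so the choice $W_{t,\eta_0}(\theta_b(t))=b$ is forced for $t<\tau_b$. For $t>\tau_b$, the observation point and its selected inverse both lie in the lower half-plane, so $b$ is not selected. The landing step is by contradiction. If $\beta(b)$ were off the real support, $C_{\mu_{\tau_b,\eta_0}}$, and hence $W_{\tau_b,\eta_0}$, would be real there. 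Continuity of the selected inverse in $(t,z)$ from ratios $t<\tau_b$ would instead give the nonreal value $b$.

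\textbf{Your route.} You pin down $W_{t,\eta_0}$ on all of $\C\setminus[-1,1]$ as the exterior Joukowski branch, using $W\sim z/(1-t)$ and the fact that the branch points $\pm c_t$ lie in $[-1,1]$. This turns selection into the modulus test $|b|>r_t$. Your jump argument across the cut then gives the explicit inclusion $[-c_t,c_t]\subseteq S_{t,\eta_0}$. Landing thus reduces to the inequality $\beta(b)<2|b|/(1+|b|^2)=c_{\tau_b}$, which is strict since $y>0$.

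\textbf{What each buys.} The paper's argument is shorter. It uses only that the support is real and where the second root of the quadratic lies. Yours costs some Joukowski bookkeeping, but it needs no continuity of the selected inverse in $t$. It also gives an explicit formula for $W_{t,\eta_0}$ and one inclusion of the support description quoted in Figure~\ref{fig:trails}. Finally, it shows that the landing point lies in the interior of the segment $[-2\sqrt{t(1-t)},2\sqrt{t(1-t)}]$ at $t=\tau_b$.
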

\begin{proof}
The transport map is
\[
 F_{t,\eta_0}(w)=(1-t)w+\frac{t}{w}.
\]
Since the fixed-ratio measure is supported on the real line, its
Cauchy transform has negative imaginary part in the upper half-plane.
Formula~\eqref{eq:weighted-W} therefore shows that the selected inverse
maps the upper half-plane into itself. Now
\[
 \Imm\theta_b(t)=y\left(1-t-\frac{t}{|b|^2}\right),
\]
which is positive precisely for $t<\tau_b$. The two solutions of
$F_{t,\eta_0}(w)=\theta_b(t)$ are $b$ and $t/((1-t)b)$; the second
is in the lower half-plane. The selected inverse is consequently $b$.
For $t>\tau_b$, the observation point and its selected inverse are
both in the lower half-plane, so $b$ is no longer selected.

At $t=\tau_b$, the observation point is $\beta(b)$. Were it outside
the real support, its Cauchy transform, and hence its selected inverse,
would be real. Continuity from smaller ratios would instead give the
nonreal value $b$. This contradiction proves the landing assertion.
The complete-set formula~\eqref{eq:complete-weighted-shadow} now gives
\eqref{eq:quadratic-transport-graph}.
\end{proof}

For $x>\sqrt2$, set
\begin{equation}\label{eq:hole-spike-data}
 b_x=x+i,\qquad
 \tau_x=\frac{x^2+1}{x^2+2},\qquad
 \beta_x=\frac{2x}{x^2+2},\qquad
 \theta_x(t)=x-\frac{x^3}{x^2+1}t
             +i\left(1-\frac{x^2+2}{x^2+1}t\right).
\end{equation}
Thus $\theta_x(0)=b_x$ and $\theta_x(\tau_x)=\beta_x\in(0,1)$.
Write $\Gamma_x=[\beta_x,b_x]$ for this straight segment.

\begin{lemma}[The complete fixed-ratio attachment]
\label{lem:quadratic-spikes}
Let $x_1<\cdots<x_k$ be distinct numbers greater than $\sqrt2$, and put
$B=\{-1,1,b_{x_1},\ldots,b_{x_k}\}$. Then, for $0<t<1$,
\begin{equation}\label{eq:hole-limit-fibers}
 S_{t,\eta_0}\cup O_t(\eta_0,B)
 =S_{t,\eta_0}\cup\{\theta_{x_j}(t):t<\tau_{x_j},\ 1\le j\le k\}.
\end{equation}
At $t=\tau_{x_j}$ the point $\beta_{x_j}$ belongs to $S_{t,\eta_0}$.
Consequently
\begin{equation}\label{eq:hole-limit-graph}
 K_{\eta_0,B}=[-1,1]\cup\bigcup_{j=1}^k\Gamma_{x_j}.
\end{equation}
At any fixed positive ratio, the distinct off-real transport points
never coincide. Their imaginary parts strictly decrease in $t$, and,
for fixed $t>0$, strictly increase with $x$.
\end{lemma}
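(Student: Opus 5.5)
The plan is to specialize Lemma~\ref{lem:quadratic-transports} to $b=b_x=x+i$ and then verify the two monotonicity claims by direct computation.

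\emph{Fibers and graph.} With $|b_x|^2=x^2+1$, formula \eqref{eq:quadratic-transport-data} gives $\tau_{b_x}=(x^2+1)/(x^2+2)=\tau_x$ and $\beta(b_x)=2x/(x^2+2)=\beta_x$. Also $1/b_x=(x-i)/(x^2+1)$, and expanding $(1-t)b_x+t/b_x$ reproduces the formula for $\theta_x(t)$ in \eqref{eq:hole-spike-data}. The hypothesis $x>\sqrt2$ gives $x^2+2<2x^2$, hence $\beta_x\in(0,1)$; the first-quadrant hypothesis is clear. Lemma~\ref{lem:quadratic-transports} then states the following for each $j$: the point $\theta_{x_j}(t)$ is selected and exterior exactly for $0<t<\tau_{x_j}$; it lands at $\beta_{x_j}\in S_{\tau_{x_j},\eta_0}$; and it is unselected for $t>\tau_{x_j}$. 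At $t=\tau_{x_j}$ itself the point lies in the support, so it is not in $O_t$. The points $\pm1$ carry positive mass and contribute no paths. Substituting into \eqref{eq:selected-outliers} gives \eqref{eq:hole-limit-fibers}. Then \eqref{eq:hole-limit-graph} is \eqref{eq:quadratic-transport-graph} with $[\beta(b_{x_j}),b_{x_j}]=\Gamma_{x_j}$.

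\emph{Monotonicity.} We have $\Imm\theta_x(t)=1-t\,(x^2+2)/(x^2+1)=1-t\bigl(1+1/(x^2+1)\bigr)$. Its $t$-derivative is $-(x^2+2)/(x^2+1)<0$, so it strictly decreases in $t$. For fixed $t>0$, the factor $1/(x^2+1)$ strictly decreases in $x>0$, so $\Imm\theta_x(t)$ strictly increases with $x$.

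\emph{Noncoincidence.} At a fixed $t>0$, suppose $x_i<x_j$. Then $\Imm\theta_{x_i}(t)<\Imm\theta_{x_j}(t)$, so the two points are distinct. This holds in particular for the off-real ones with $t<\tau$. No step is a genuine obstacle. The only delicate point is the endpoint ratio $t=\tau_{x_j}$, which is handled by the landing assertion of Lemma~\ref{lem:quadratic-transports}.
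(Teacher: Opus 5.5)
Your proposal is correct and follows the paper's proof essentially step for step. You specialize Lemma~\ref{lem:quadratic-transports} to $b_x=x+i$, read off the fibers and the graph, and derive both monotonicity claims and noncoincidence from $\Imm\theta_x(t)=1-t\bigl(1+1/(x^2+1)\bigr)$.

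The only slip is in a side remark that the argument does not need. The inequality $x^2+2<2x^2$ gives $\beta_x>1/x$, not $\beta_x<1$. In fact $\beta_x<1$ holds for every $x>0$, since $x^2-2x+2=(x-1)^2+1>0$. The hypothesis $x>\sqrt2$ is what makes $\beta_x$ decrease in $x$, and that matters only for the later crossing arguments.
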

\begin{proof}
Apply Lemma~\ref{lem:quadratic-transports} to $b_x=x+i$.
Its formulas give $\tau_{b_x}=\tau_x$, $\beta(b_x)=\beta_x$, and
$\theta_{b_x}=\theta_x$ after direct simplification. For
$t<\tau_x$ the selected inverse is $b_x$; for $t>\tau_x$ it is
not. At $t=\tau_x$ the landing point belongs to the real fixed-ratio
support: otherwise its selected inverse would be real, while
persistence from smaller ratios would give the nonreal value $b_x$.
This proves \eqref{eq:hole-limit-fibers} and
\eqref{eq:hole-limit-graph}. Finally,
\[
 \Imm\theta_x(t)=1-t\left(1+\frac1{x^2+1}\right)
\]
strictly decreases in $t$ and strictly increases in $x$ for fixed
$t>0$. Thus distinct off-real transport points never coincide at
the same positive ratio.
\end{proof}

\subsection{Tracking the small-mass paths}

For the locations in Lemma~\ref{lem:quadratic-spikes}, define
\begin{equation}\label{eq:hole-polynomial-family}
 H_{N,\mathbf x}(z)=(z^2-1)^N
                    \prod_{j=1}^k(z-x_j-i),\qquad
 \eta_N=\frac1{2N+k}\divz H_{N,\mathbf x}.
\end{equation}
For fixed $N$, the shadows of $H_{N,\mathbf x}$ involve all powers
$H_{N,\mathbf x}^n$ of this fixed polynomial; $N$ is not the power level.
As $N\to\infty$, its root law tends to $\eta_0$ while its root-location
set remains $B$. Corollary~\ref{cor:cluster-entire-shadows} gives
\begin{equation}\label{eq:hole-pooled-convergence}
 \Sh_{H_{N,\mathbf x}}\longrightarrow K_{\eta_0,B},\qquad
 \AllSh_{H_{N,\mathbf x}}\longrightarrow K_{\eta_0,B}.
\end{equation}
Also, on every compact interval $I\subset(0,1)$,
\begin{equation}\label{eq:hole-uniform-fibers}
 \sup_{t\in I}d_{\mathrm H}
 \left(S_{t,\eta_N},S_{t,\eta_0}\cup O_t(\eta_0,B)\right)\longrightarrow0.
\end{equation}
Indeed, a contrary sequence has a convergent subsequence of ratios,
and the joint statement of
Corollary~\ref{cor:mass-location-continuity} applies to it.

\begin{lemma}[Paths that follow one transported cluster]
\label{lem:spike-tracking}
Fix $t_0>0$ smaller than every $\tau_{x_j}$. For each $j$ and all large
$N$, there is a continuous definable section
$\gamma_{j,N}(t)\in S_{t,\eta_N}$, $t_0\le t\le1$, starting at a point
converging to $\theta_{x_j}(t_0)$ and ending at the centroid of $\eta_N$.
Any such choice has the following properties:
\begin{enumerate}[label=\textup{(\roman*)},leftmargin=2.2em]
\item On every compact subinterval of $[t_0,\tau_{x_j})$, it converges
uniformly to $\theta_{x_j}(t)$.
\item If $u_N\to u\in[t_0,\tau_{x_j})$, then
\[
 \sup_{\max\{t_0,u_N\}\le t\le1}
 \dist\left(\gamma_{j,N}(t),
       [\theta_{x_j}(u),\beta_{x_j}]\cup[-1,1]\right)
       \longrightarrow0.
\]
\end{enumerate}
\end{lemma}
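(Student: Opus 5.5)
The plan is to obtain existence from Lemma~\ref{lem:nested-support-hulls}, and to get (i) and (ii) from the uniform fiber convergence \eqref{eq:hole-uniform-fibers} together with a continuity argument in the \emph{lifted} picture $(t,z)$. Working in the lift matters because the segments $\Gamma_{x_i}$ cross one another in the plane, but, by Lemma~\ref{lem:quadratic-spikes}, never at a common ratio.

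\emph{Existence.} By \eqref{eq:hole-limit-fibers}, the point $\theta_{x_j}(t_0)$ lies in the limit fiber at ratio $t_0$. Since $t_0<\tau_{x_j}$, it is an off-real point, and \eqref{eq:hole-uniform-fibers} provides points $z_N\in S_{t_0,\eta_N}$ with $z_N\to\theta_{x_j}(t_0)$. Lemma~\ref{lem:nested-support-hulls}, applied to $\eta_N$ with initial value $z_N$, then gives a continuous definable section on $[t_0,1]$ ending at the centroid of $\eta_N$.

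\emph{Separation of the limit fibers.} Fix a compact $J\subset(0,1)$. For $t\in J$, the limit fiber is $S_{t,\eta_0}\subset[-1,1]$ together with the points $\theta_{x_i}(t)$ for $t<\tau_{x_i}$. By Lemma~\ref{lem:quadratic-spikes}, these points have pairwise distinct imaginary parts at each ratio, and those imaginary parts are continuous and strictly decreasing in $t$. Consequently, on any sub-interval where $\Imm\theta_{x_i}\ge c>0$ for the indices involved, the pieces are separated by a uniform $\delta(c)>0$. Now fix $\varepsilon\ll\delta$. For large $N$, \eqref{eq:hole-uniform-fibers} places $S_{t,\eta_N}$ inside the union of the $\varepsilon$-neighborhoods of these pieces, uniformly in $t\in J$.

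\emph{Proof of (i).} On $[t_0,\tau_{x_j}-\sigma]$, the set of $t$ with $|\gamma_{j,N}(t)-\theta_{x_j}(t)|<\varepsilon$ contains $t_0$. It is open by continuity. It is also closed in that interval: a limit point lies in the fiber, hence within $\varepsilon$ of one piece, and by the $\delta$-separation that piece must be $\theta_{x_j}(t)$. So the set is the whole interval.

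\emph{Proof of (ii).} This is the main obstacle. I would argue by contradiction. Suppose there are ratios $t_N\ge\max\{t_0,u_N\}$ and a $c>0$ with $\gamma_{j,N}(t_N)$ at distance at least $c$ from $[\theta_{x_j}(u),\beta_{x_j}]\cup[-1,1]$.

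First remove ratios near $1$. Every $\tau_{x_i}<1$, so fix $t'>\max_i\tau_{x_i}$. There the limit fiber lies in $[-1,1]$. Lemma~\ref{lem:nested-support-hulls} gives $S_{t,\eta_N}\subseteq\conv S_{t',\eta_N}$ for $t\ge t'$, and this hull is within $o(1)$ of $[-1,1]$. Hence we may assume $t_N\to t^*\in[t_0,t']$.

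Next identify where $\gamma_{j,N}(t_N)$ sits. By the fiber convergence and (i), it must be close to $\theta_{x_i}(t^*)$ for some $i\ne j$ with $\Imm\theta_{x_i}(t^*)>0$. It cannot be close to the $j$th spike: for $t<\tau_{x_j}$ the point $\theta_{x_j}(t)$ lies on the target segment, and after $\tau_{x_j}$ that spike is gone.

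Then trace the path backward. Let $s_N<t_N$ be the last ratio at which $\gamma_{j,N}$ is within $\varepsilon$ of the time-dependent target: $\theta_{x_j}(s)$ if $s<\tau_{x_j}$, and $[-1,1]$ otherwise. Such an $s_N$ exists by (i). On $[s_N,t_N]$ the path stays near the spike pieces, so by separation it stays near the single tube $\theta_{x_i}(\cdot)$. Because $\Imm\theta_{x_i}$ decreases in $t$, we get $\Imm\theta_{x_i}(s_N)\ge\Imm\theta_{x_i}(t_N)\gtrsim c$.

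At $s_N$, however, the path is also within $\varepsilon$ of either $\theta_{x_j}(s_N)$ or $[-1,1]$. The first is impossible because distinct transport points are separated at the same ratio. The second is impossible because $\Imm\theta_{x_i}(s_N)\gtrsim c$. This contradiction proves (ii).

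The delicate points are making $\varepsilon,\delta$ uniform near the landing times $\tau_{x_i}$, where the imaginary parts degenerate. I would handle this by choosing $c$ first and letting the separation constant depend only on the pieces with $\Imm\ge c/2$.
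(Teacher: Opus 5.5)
Your proposal is correct and is essentially the paper's proof. Existence comes from the lower Hausdorff estimate together with Lemmas~\ref{lem:interval-selection} and~\ref{lem:nested-support-hulls}, nested convex hulls handle the ratios after all landings, and the core is an open--closed argument in $t$ resting on the uniform separation of the parts of the transport graphs above a fixed height, which follows from the strict ordering and monotone decrease of the imaginary parts.

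The only difference is organizational. The paper runs that argument on the whole interval where the relevant graph stays above height $h$, so membership in its tube is decided at $t_0$; this makes your auxiliary time $s_N$ unnecessary. It also replaces your loosely justified claim that the path stays near the spike pieces on $[s_N,t_N]$: the choice of $s_N$ alone does not keep the path away from $[-1,1]$ before $\tau_{x_j}$, though the backward tube argument from $t_N$ gives the conclusion you need.
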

\begin{proof}
The lower Hausdorff estimate in \eqref{eq:hole-uniform-fibers} supplies
the starting point. Lemmas~\ref{lem:interval-selection}
and~\ref{lem:nested-support-hulls} supply a section ending at the
centroid. Choose $t_1\in(\max_j\tau_{x_j},1)$. Uniform convergence in
\eqref{eq:hole-uniform-fibers} holds on $[t_0,t_1]$, and $S_{t_1,\eta_N}$
approaches a subset of $[-1,1]$. By nested convex hulls, all fibers
with $t\ge t_1$ lie in a horizontal strip of height tending to zero
and with horizontal coordinates in $[-1-o(1),1+o(1)]$.

Here is the separation argument for the earlier ratios. Fix $h>0$.
For an index $\ell$, the portion of $\theta_{x_\ell}$ with imaginary
part at least $h$ and with $t\ge t_0$ is the graph of a function of $t$
on a compact interval. On its time
interval it is separated by a positive distance, uniformly in $t$,
from the real support and every other transport point. This follows
from the strict ordering of imaginary parts in
Lemma~\ref{lem:quadratic-spikes}. For sufficiently large $N$, a section
can therefore neither enter nor leave a small moving neighborhood of
that graph during its time interval: all the support points lie either
in that neighborhood or at a fixed positive distance from it. The set
of times at which the section belongs to the neighborhood is both
open and closed in the interval, so whether the section lies in the
neighborhood is decided at time $t_0$. A graph whose height is at least
$h$ at some later time also has that height at $t_0$, because each
height decreases.

Consequently the $j$th section follows the $j$th graph as long as that
graph stays above height $h$, and it cannot enter any other graph above
height $h$. Every remaining possible limiting point is on $[-1,1]$ or
on a transport segment at height at most $h$. Such a segment point is
within $C_{\mathbf x}h$ of its real landing point, by
\eqref{eq:hole-spike-data}. Uniform convergence excludes negative
imaginary parts bounded away from zero as well. This proves (i), and
proves (ii) first up to an error bounded by $C_{\mathbf x}h$, also
allowing the moving lower endpoint $u_N$. Letting $h\downarrow0$
proves (ii). The convex-hull estimate handles $t\ge t_1$ throughout.
\end{proof}

\subsection{A four-location counterexample}
We first show that the limiting core of a fixed polynomial need not be
contractible.

\begin{theorem}[A robust four-location hole]
\label{thm:noncontractible}\label{conj:contractible}
For every sufficiently large integer $N$, let
\begin{equation}\label{eq:explicit-hole-polynomial}
 P_N(z)=(z^2-1)^N(z-2-i)(z-3-i).
\end{equation}
The point
\[
 q_\diamond=\frac23+\frac{i}{30}
\]
lies in a bounded connected component of both
$\C\setminus\Sh_{P_N}$ and $\C\setminus\AllSh_{P_N}$.
In particular, $\Sh_{P_N}$ is not contractible. These polynomials have
four distinct root locations and no reflection axis for $N\ge2$.
The existence of a hole holds on a nonempty open coefficient
neighborhood of each sufficiently large $P_N$, and hence also for
square-free polynomials with no reflection axis.
\end{theorem}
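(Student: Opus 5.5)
The plan is to realize the abstract picture of Lemma~\ref{lem:quadratic-spikes} with $k=2$, $x_1=2$, $x_2=3$. Here $\beta_2=2/3$, $\beta_3=6/11$, $\Gamma_2=[2/3,2+i]$ and $\Gamma_3=[6/11,3+i]$. Since $\beta_3<\beta_2$ while $\Imm b_3=\Imm b_2$ and $\Ree b_3>\Ree b_2$, the two segments cross at the single point $v_*=(30+4i)/37$. Together with $[\beta_3,\beta_2]$ they bound a small triangle $\Delta$. At height $1/30$, $\Gamma_3$ has real part $207/330<2/3$ and $\Gamma_2$ has real part $2/3+2/45>2/3$, so $q_\diamond$ lies in the interior of $\Delta$. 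Hence $\delta:=\dist(q_\diamond,K_{\eta_0,B})>0$.

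By \eqref{eq:hole-pooled-convergence}, for large $N$ both $\Sh_{P_N}$ and $\AllSh_{P_N}$ lie in the $\delta/2$-neighborhood of $K_{\eta_0,B}$, so $q_\diamond$ lies outside both shadows. It then suffices to construct a closed curve in $\Sh_{P_N}$ with winding number $\pm1$ about $q_\diamond$. Such a curve forces the component of $\C\setminus\Sh_{P_N}$ containing $q_\diamond$ to be bounded, since any path to $\infty$ would have to cross the curve. Proposition~\ref{prop:shadow-foundations} transfers this to $\AllSh_{P_N}$, because complementary components correspond bijectively with boundedness preserved. Noncontractibility follows at once: a contraction of $\Sh_{P_N}$ would null-homotope the curve inside $\C\setminus\{q_\diamond\}$.

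To build the loop, take the sections $\gamma_{2,N}$ and $\gamma_{3,N}$ of Lemma~\ref{lem:spike-tracking}, with $t_0$ small. They lie in the core, start near $\theta_2(t_0)$ and $\theta_3(t_0)$, and end at the common centroid $\mathfrak c_{\eta_N}$. The main obstacle is to force the two planar curves to meet near $v_*$. The two clusters pass $v_*$ at different ratios ($55/74$ versus $30/37$), so no fibrewise coincidence is available, and the intersection must be purely planar. I would argue by a Jordan-curve separation. Let $U$ be a small disk about $v_*$. By Lemma~\ref{lem:spike-tracking}(i), the arc of $\gamma_{3,N}$ through $U$ is uniformly close to the chord $\Gamma_3\cap U$, and it enters and leaves through disjoint small boundary arcs. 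Hence it separates $U$ into two pieces. The arc of $\gamma_{2,N}$ enters $U$ near the point of $\Gamma_2\cap\partial U$ on one side of $\Gamma_3$ and exits near the point on the other side. Part (i) gives the entry, since $v_*$ is reached at $t=55/74<\tau_2$ by a uniformly converging trail. Part (ii) with $u_N$ at the exit time keeps the continuation near $[v_*,\beta_2]\cup[-1,1]$ and away from the other side. Consequently the arc of $\gamma_{2,N}$ in $U$ must meet the arc of $\gamma_{3,N}$ at some point $z_N$.

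The loop follows $\gamma_{2,N}$ from $z_N$ to the centroid and then $\gamma_{3,N}$ backwards from the centroid to $z_N$. By (ii), the first piece stays near $[v_*,\beta_2]\cup[-1,1]$ and the second near $[\beta_3,v_*]\cup[-1,1]$. To compute the winding, I would count signed crossings with a ray from $q_\diamond$, say vertically upward. Inside the relevant neighborhood this ray meets only the $\gamma_{3,N}$ piece, near $\Gamma_3$ at height $4/81$ above $q_\diamond$. It is crossed exactly once, since that portion is a uniform small perturbation of a monotone segment by (i). The other parts of the loop stay within $o(1)$ of $[-1,1]$ or of $[v_*,\beta_2]$ and never reach the ray; for this the margin $1/30$ is what is needed. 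Hence the winding number is $\pm1$. A cleaner alternative is to homotope the loop, within a $\delta/2$-neighborhood avoiding $q_\diamond$, to the boundary of $\Delta$ plus backtracking along $[-1,1]$.

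The remaining claims are as follows. For a reflection axis, the root $\pm1$ has multiplicity $N\ge2$, so any symmetry must preserve $\{\pm1\}$, which forces the real or the imaginary axis. Neither maps $\{2+i,3+i\}$ to itself.

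For openness, every step above uses only finitely many strict inequalities together with Hausdorff approximations. I would redo the argument for $Q\to P_N$ using a diagonal form of Corollary~\ref{cor:mass-location-continuity} and \eqref{eq:uniform-all-orders}. The perturbed polynomials have root data close to $(\eta_N,B)$, with the clusters near $\pm1$ possibly split, which Theorem~\ref{thm:finite-cluster-universality} explicitly permits. Their fixed-ratio supports therefore satisfy the uniform approximations \eqref{eq:hole-uniform-fibers} and the tracking Lemma~\ref{lem:spike-tracking}. This yields an open neighborhood, and generic members of it are square-free with no reflection axis.
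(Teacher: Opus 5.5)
Your proposal is correct and follows essentially the paper's proof. The shared steps are: excluding $q_\diamond$ from both shadows via \eqref{eq:hole-pooled-convergence}, taking the two tracking sections of Lemma~\ref{lem:spike-tracking}, forcing a crossing near $v_*$, closing the loop at the common centroid and reading off its winding from the confinement in part~(ii), and the same reflection-axis and stability reasoning. The differences are only technical: you use a planar crossing argument in a disk where the paper uses a degree argument for $(t,t')\mapsto\gamma_{1,N}(t)-\gamma_{2,N}(t')$ on a time rectangle; you use a vertical ray where the paper uses two horizontal slit logarithms; and for openness you treat nearby polynomials as arrays rather than fixing margins.

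One point needs correcting. A merely continuous section uniformly close to $\Gamma_3$ need not cross the vertical ray \emph{exactly once}; only the algebraic count is controlled, so compute the argument change on a small disk about $2/3+4i/81$ instead. Likewise, your homotopy alternative must be carried out piecewise, rel endpoints, inside the star-shaped thin neighborhoods of $[v_*,2/3]\cup[-1,1]$ and $[v_*,6/11]\cup[-1,1]$, because a thin neighborhood of $K_{\eta_0,B}$ avoiding $q_\diamond$ is not simply connected.
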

\begin{proof}
For $x_1=2$ and $x_2=3$, the two segments have real landing points
$\beta_2=2/3$ and $\beta_3=6/11$. They cross transversely at
\begin{equation}\label{eq:explicit-hole-crossing}
 v_* =\frac{30}{37}+\frac4{37}i,
 \qquad
 \theta_2(55/74)=\theta_3(30/37)=v_*.
\end{equation}
Both ratios precede the corresponding landing ratios $5/6$ and
$10/11$. The point $q_\diamond$ lies in the open triangle $\mathcal T_*$ with vertices
$6/11$, $2/3$, and $v_*$ (Figure~\ref{fig:hole-mechanism}). It is outside
$K=[-1,1]\cup\Gamma_2\cup\Gamma_3$, so
\eqref{eq:hole-pooled-convergence} excludes it from \emph{both}
shadows for all large $N$.

\smallskip\noindent\emph{Intersecting paths in the actual core.}
Take the two sections of Lemma~\ref{lem:spike-tracking}, starting at
$t_0=1/2$; thus $\gamma_{1,N}$ follows $\theta_2$ and $\gamma_{2,N}$
follows $\theta_3$. Near the two times in \eqref{eq:explicit-hole-crossing},
the continuous map
\[
 (t,t')\longmapsto\gamma_{1,N}(t)-\gamma_{2,N}(t')
\]
converges uniformly to $\theta_2(t)-\theta_3(t')$. The latter is an
invertible real-affine map, with a zero in the interior of a small
rectangle of time parameters. It follows that the former has a zero
$(t_N,t'_N)$ in that rectangle, with $(t_N,t'_N)\to(55/74,30/37)$.
For completeness, on the rectangle boundary the limiting map has
winding number $1$ or $-1$ about zero. A sufficiently small uniform
perturbation has the same winding number. A continuous map avoiding
zero on the whole rectangle would extend that boundary loop into
$\C\setminus\{0\}$ and force its winding number to be zero. This
proves the asserted intersection without smoothness of the sections.
Let $v_N=\gamma_{1,N}(t_N)=\gamma_{2,N}(t'_N)$; then $v_N\to v_*$.

\smallskip\noindent\emph{Closing the paths and computing their winding.}
Follow the first section from $v_N$ to the common terminal centroid,
then the second section backwards to $v_N$. This gives a continuous
closed curve $\mathcal L_N$ in $\Sh_{P_N}$. Its winding number about $q_\diamond$ is
nonzero. To justify that last assertion, put
\[
 A_1=[v_*,2/3]\cup[-1,1],\qquad
 A_2=[v_*,6/11]\cup[-1,1].
\]
The left horizontal ray from $q_\diamond$ misses $A_1$, and the right
horizontal ray misses $A_2$ (Figure~\ref{fig:hole-mechanism}(b)). By
compactness, fixed sufficiently thin neighborhoods of the two trees avoid their respective rays. By
Lemma~\ref{lem:spike-tracking}(ii), the two parts of $\mathcal L_N$ lie in
these respective neighborhoods for all large $N$.
On the corresponding slit domains, the continuous logarithm branches
\[
 \ell_1(z)=\log(z-q_\diamond),\qquad
 \ell_2(z)=\log(q_\diamond-z)+\pi i
\]
are both logarithms of $z-q_\diamond$.
The common starting point tends to $v_*$, and the terminal centroid
$c_N$ tends to zero. Hence the starting point is above $q_\diamond$
and $c_N$ is below it for all large $N$. At the starting point the
two branches agree, whereas
$\ell_2(c_N)=\ell_1(c_N)+2\pi i$.
Following the first section forwards and the second backwards therefore
gives total logarithmic change $-2\pi i$. Thus $\mathcal L_N$ has winding number
$-1$ about $q_\diamond$, as does $\partial\mathcal T_*$ traversed from $v_*$
to $2/3$ to $6/11$.

Winding number is constant on each component of a loop's complement
and is zero in its unbounded component. Since $q_\diamond$ is outside both
shadows and $\mathcal L_N$ is contained in both, $q_\diamond$ lies in a bounded
complementary component of each. A contraction of $\Sh_{P_N}$ would
contract $\mathcal L_N$ without passing through $q_\diamond$, contrary to its nonzero
winding number.

For $N\ge2$, a reflection preserving the root multiset must preserve
$\{-1,1\}$, the two roots of multiplicity $N$. Its axis would be either
the real or the imaginary axis; neither preserves the other two roots.

\smallskip\noindent\emph{Coefficient stability.}
First fix the two tree neighborhoods avoiding their respective rays
from $q_\diamond$ and a small crossing rectangle with nonzero boundary winding. In the
proof of Lemma~\ref{lem:spike-tracking}, choose a fixed height $h>0$
so small that the low-height pieces lie in these neighborhoods.
Only strict uniform separation and closeness inequalities on the
compact ratio interval $[t_0,t_1]$, with $t_1\in(10/11,1)$ as in that
proof, are then needed to confine each
section's suffix to its tree neighborhood and its crossing part to
its prescribed rectangle. Choose $N$ large enough that all these
inequalities, and exclusion of $q_\diamond$ from the full shadow, hold with
positive margins. Joint fixed-ratio Hausdorff continuity on $[t_0,t_1]$ follows from
Corollary~\ref{cor:mass-location-continuity}, uniformly on this compact
ratio interval, and full-shadow continuity is part of
Proposition~\ref{prop:shadow-foundations}. Hence these margins persist under
all sufficiently small fixed-degree coefficient perturbations. Every
perturbed support family still
has continuous sections by Lemma~\ref{lem:interval-selection}, and
nested convex hulls control all ratios after $t_1$. The same intersection
and winding argument therefore gives a hole throughout an open
neighborhood. The square-free locus is open and dense. The no-reflection locus is
open at $P_N$: otherwise a convergent sequence of root multisets with
reflection axes would give an axis for $P_N$. Indeed, every such axis
passes through the centroid, and its direction has a convergent
subsequence. Intersecting these two loci with the neighborhood gives
the stated perturbations.
\end{proof}

\begin{figure}[htbp]
\centering
\begin{minipage}[b]{0.4688\textwidth}\centering
 \includegraphics[width=\linewidth]{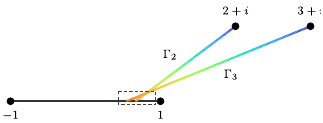}\\[1pt]
 {\small (a)}
\end{minipage}\hfill
\begin{minipage}[b]{0.4995\textwidth}\centering
 \includegraphics[width=\linewidth]{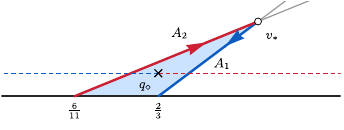}\\[1pt]
 {\small (b)}
\end{minipage}
\caption{(a) The auxiliary limit $K$ for \eqref{eq:explicit-hole-polynomial},
not the core of any fixed $P_N$. Each point of $\Gamma_x$ is colored by the
ratio $t$ at which $\theta_x$ reaches it, as in Figure~\ref{fig:trails}:
the crossing $v_*$ is reached at the different ratios $\frac{55}{74}$
and $\frac{30}{37}$. (b) The dashed box in (a): the open triangle $\mathcal T_*$ (shaded) and
the trees $A_1$, $A_2$ of the proof of Theorem~\ref{thm:noncontractible}; the left ray from $q_\diamond$ misses $A_1$,
the right ray misses $A_2$, and the arrows give the orientation of
$\mathcal L_N$. The proof constructs intersecting support paths for each
sufficiently large fixed $N$ and excludes $q_\diamond$ from the full
shadow; the hole does not follow from this picture or from Hausdorff
convergence alone.}
\label{fig:hole-mechanism}
\end{figure}

\begin{figure}[htbp]
\centering
\includegraphics[width=\textwidth]{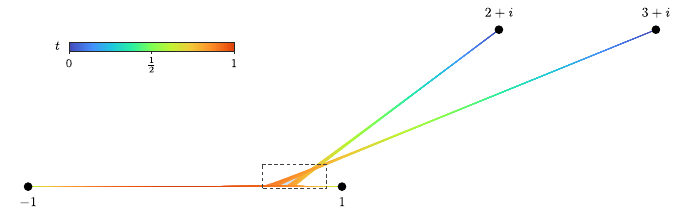}\\[1pt]
{\small (a)}\\[6pt]
\includegraphics[width=\textwidth]{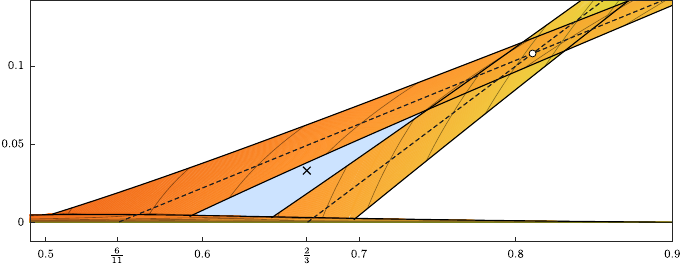}\\[1pt]
{\small (b)}
\caption{The core of $P_N(z)=(z^2-1)^N(z-2-i)(z-3-i)$ for $N=1000$
(degree $d=2002$), as the union of the fixed-ratio supports
$S_{t,\rootmu_{P_N}}$, colored by $t$. (a) The whole core, with
$t$ in steps of $\frac1{400}$: a thin band along $[-1,1]$ and two narrow strips
joining the simple roots $2+i$ and $3+i$ to the real axis. (b) The
dashed box in (a), with $\frac35\le t\le\frac{19}{20}$ in steps of
$\frac1{2000}$ (filled in between); darker arcs: the supports with $t$ a
multiple of $\frac1{40}$. The strips cross and land
on the band, enclosing a hole of the core (shaded) that contains
$q_\diamond$ (cross), hence a hole of the full shadow. Black curves:
the paths of the endpoints of the supports, which are critical values
of $F_{t,\rootmu_{P_N}}$. Dashed: the limit segments $[2/3,2+i]$ and
$[6/11,3+i]$, crossing at $v_*$ (circle). The hole has area about
$0.0019$; as $N\to\infty$ this area tends to $8/1221\approx0.0066$,
while the area of the core tends to zero
(Corollary~\ref{cor:thin-shadow-hole}).}
\label{fig:hole}
\end{figure}

\begin{corollary}[Vanishing shadow area around a persistent planar hole]
\label{cor:thin-shadow-hole}
For the four-location family \eqref{eq:explicit-hole-polynomial},
\[
 \Area(\Sh_{P_N})=\Area(\AllSh_{P_N})\longrightarrow0.
\]
Let $\Omega_N$ be the complementary component containing
$q_\diamond=2/3+i/30$, for either shadow. Then
\[
 \Area(\Omega_N)\longrightarrow\frac8{1221}>0.
\]
Thus holes of positive limiting area can persist while the area of the
shadow itself tends to zero.
\end{corollary}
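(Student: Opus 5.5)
The plan is to deduce both limits from the Hausdorff convergence \eqref{eq:hole-pooled-convergence} together with the loop $\mathcal L_N$ built in the proof of Theorem~\ref{thm:noncontractible}. The identity $\Area(\Sh_{P_N})=\Area(\AllSh_{P_N})$ is \eqref{eq:shadow-interior-area}. Moreover, by Proposition~\ref{prop:shadow-foundations} the two shadows have the same complementary components up to removing the countable set $T_P$, with the same areas. So it suffices to treat $\Sh_{P_N}$. Write $K=[-1,1]\cup\Gamma_2\cup\Gamma_3$ and let $N_\varepsilon(X)$ denote the open $\varepsilon$-neighborhood of a set $X$.

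\emph{Vanishing area.} Fix $\varepsilon>0$. By \eqref{eq:hole-pooled-convergence}, $\Sh_{P_N}\subseteq N_\varepsilon(K)$ for all large $N$. Since $K$ is a union of three segments, $\Area(N_\varepsilon(K))=O(\varepsilon)$. Letting $\varepsilon\downarrow0$ gives the first limit.

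\emph{The limiting value.} The candidate limit is the area of the open triangle $\mathcal T_*$ with vertices $6/11$, $2/3$, $v_*$. Its base has length $2/3-6/11=4/33$ and its height is $\Imm v_*=4/37$, so
\[
 \Area(\mathcal T_*)=\tfrac12\cdot\tfrac4{33}\cdot\tfrac4{37}=\tfrac8{1221}.
\]

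\emph{Winding number of $\mathcal L_N$.} The key step is to show that the winding number of $\mathcal L_N$ about any point $w\notin N_\varepsilon(A_1\cup A_2)$ equals the winding number of $\partial\mathcal T_*$ about $w$ (with the orientation used in the proof), once $N$ is large. The argument runs as follows.
\begin{itemize}
\item For small $\varepsilon$, the neighborhoods $N_\varepsilon(A_1)$ and $N_\varepsilon(A_2)$ of the two embedded trees are simply connected.
\item By Lemma~\ref{lem:spike-tracking}(ii), applied with $u$ the crossing ratio, each half of $\mathcal L_N$ lies in the corresponding neighborhood. Its endpoints are $v_N\to v_*$ and $c_N\to0$.
\item After joining these endpoints to $v_*$ and $0$ by short segments, each half is homotopic, relative to its endpoints and inside its simply connected neighborhood, to the tree path from $v_*$ to $0$: the path $v_*\to2/3\to0$ in $A_1$, respectively $v_*\to6/11\to0$ in $A_2$.
\item These homotopies avoid $w$. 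The concatenation of the two tree paths is $\partial\mathcal T_*$ plus a back-and-forth piece along $[6/11,2/3]\cup[0,\cdot]$ that cancels.
\end{itemize}
Hence the winding number of $\mathcal L_N$ about $w$ is $-1$ for $w\in\mathcal T_*\setminus N_\varepsilon(K)$ and $0$ for $w\notin\overline{\mathcal T_*}\cup N_\varepsilon(K)$.

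\emph{Lower bound.} The set $\mathcal T_*\setminus\overline{N_\varepsilon(\partial\mathcal T_*)}$ is an inner parallel triangle, hence convex and connected. It contains $q_\diamond$ for small $\varepsilon$, and it avoids $\Sh_{P_N}\subseteq N_\varepsilon(K)$ for large $N$. Therefore it lies in $\Omega_N$, giving $\Area(\Omega_N)\ge\Area(\mathcal T_*)-O(\varepsilon)$.

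\emph{Upper bound.} $\Omega_N$ is connected and disjoint from $\mathcal L_N\subseteq\Sh_{P_N}$, so the winding number of $\mathcal L_N$ is constant on $\Omega_N$. Its value at $q_\diamond$ is $-1$. By the previous step, the winding number is $0$ off $\overline{\mathcal T_*}\cup N_\varepsilon(K)$, so $\Omega_N\subseteq\overline{\mathcal T_*}\cup N_\varepsilon(K)$. This gives $\Area(\Omega_N)\le\Area(\mathcal T_*)+O(\varepsilon)$. Letting $\varepsilon\downarrow0$ yields the limit $8/1221$.

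The main obstacle is making the homotopy step rigorous. One must check that Lemma~\ref{lem:spike-tracking}(ii) confines each entire half of the loop, including the portion after the crossing parameter and the portion near the centroid, to a thin neighborhood of the correct tree. One must also check that these neighborhoods are simply connected. This is where the fixed small $\varepsilon$ and the choice of $N$ must be coordinated, as in the proof of Theorem~\ref{thm:noncontractible}.
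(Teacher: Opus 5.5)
Your proof is correct. The obstacle you flag closes at once. Lemma~\ref{lem:spike-tracking}(ii) takes the supremum over all $t$ from the crossing ratio up to $1$, so it already confines each entire half of $\mathcal L_N$, including the part near $c_N$. Moreover, $A_1$ and $A_2$ are unions of two segments through $2/3$ and $6/11$ respectively. They are therefore star convex, so each of their open $\varepsilon$-neighborhoods is star convex, hence simply connected, for every $\varepsilon>0$.

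There are two small slips. First, the cancelling back-and-forth piece is $[0,6/11]$; the segment $[6/11,2/3]$ is the base of $\partial\mathcal T_*$ and does not cancel. Second, for the inner triangle you should note that convexity of $\mathcal T_*$ gives $\dist(w,K)=\dist(w,\partial\mathcal T_*)$ for $w\in\mathcal T_*$. Hence the inner triangle avoids all of $N_\varepsilon(K)$, including the continuations of $\Gamma_2,\Gamma_3$ above $v_*$.

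Your route differs from the paper's in the winding step and the area step.

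\emph{The paper's route.} It never computes the winding function. It places $\mathcal L_N$ in thin neighborhoods of the single set $X=\overline{\mathcal T_*}\cup[-1,1]$, which is star convex about $2/3$. A straight-line homotopy to a constant then gives zero winding about each fixed point off $K\cup\mathcal T_*$. The value $-1$ at $q_\diamond$ is imported from Theorem~\ref{thm:noncontractible}. The paper then argues pointwise that $\ind_{\Omega_N}\to\ind_{\mathcal T_*}$ almost everywhere and concludes by dominated convergence.

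\emph{Your route.} You deform each half, relative to its endpoints, to its tree path. This identifies $\wind(\mathcal L_N,\cdot)$ with $\wind(\partial\mathcal T_*,\cdot)$ uniformly off $N_\varepsilon(K)$. The result is a uniform sandwich: the inner parallel triangle lies in $\Omega_N$, and $\Omega_N\subseteq\overline{\mathcal T_*}\cup N_\varepsilon(K)$, as soon as the Hausdorff and tracking errors are below $\varepsilon$.

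\emph{Comparison.} The paper's version is shorter and needs less path bookkeeping, since it only has to show that certain winding numbers vanish. Yours is quantitative: it gives $\bigl|\Area(\Omega_N)-8/1221\bigr|=O(\varepsilon)$ directly, without dominated convergence. It also recomputes the value $-1$ at $q_\diamond$ rather than citing it.
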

\begin{proof}
By \eqref{eq:hole-pooled-convergence}, the cores converge in Hausdorff
distance to the compact attachment graph $K$, a finite union of
segments. For each $\varepsilon>0$, all sufficiently late cores lie in
$K_\varepsilon=\{z:\dist(z,K)\le\varepsilon\}$. These neighborhoods
decrease to $K$ as $\varepsilon\downarrow0$; they are bounded and
$\Area(K)=0$. Continuity from above of planar measure therefore gives
$\Area(K_\varepsilon)\to0$, and hence $\Area(\Sh_{P_N})\to0$.
The full shadow has the same area by
\eqref{eq:shadow-interior-area}. Recall the open triangle $\mathcal T_*$
with vertices $6/11$, $2/3$, and $v_*$ from the proof of
Theorem~\ref{thm:noncontractible}. Every point of
$\mathcal T_*$ can be joined to $q_\diamond$ by a compact path in
$\mathcal T_*$. Since $K$ misses that path,
\eqref{eq:hole-pooled-convergence} excludes both shadows from it for
all large $N$. Hence each point of $\mathcal T_*$ eventually belongs to
$\Omega_N$. For a point outside $K\cup\mathcal T_*$, the loop
$\mathcal L_N$ in the proof of Theorem~\ref{thm:noncontractible} has
winding number zero for all large $N$. Indeed, its two parts lie in
arbitrarily thin neighborhoods of the two trees, hence in the
$\varepsilon$-neighborhood of $X=\overline{\mathcal T_*}\cup[-1,1]$ for
every $\varepsilon>0$ and all large $N$. The set $X$ is star convex with
respect to $2/3$, since $\overline{\mathcal T_*}$ and $[-1,1]$ are convex
and contain $2/3$, and therefore so is each of its
$\varepsilon$-neighborhoods. The point has positive distance from $X$,
since $X\subseteq K\cup\mathcal T_*$. For small $\varepsilon$, the
straight-line homotopy from $\mathcal L_N$ to the constant loop $2/3$
stays in the $\varepsilon$-neighborhood of $X$ and never meets the
point. The winding number about $\mathcal L_N$ is constant on the
connected set $\Omega_N$, which misses $\mathcal L_N$, and is nonzero at
$q_\diamond$; so the point cannot belong to $\Omega_N$. Thus $\ind_{\Omega_N}\to\ind_{\mathcal T_*}$ almost
everywhere. All bounded complementary components lie in the fixed
convex hull of $B$, so dominated convergence applies. Finally
$\Area(\mathcal T_*)=\tfrac12(2/3-6/11)(4/37)=8/1221$.
\end{proof}

The finite-$N$ geometry of the theorem and corollary is shown in
Figure~\ref{fig:hole}.

\subsection{Arbitrarily many holes}

\begin{theorem}[Unbounded hole complexity]
\label{thm:many-holes}
For each $k\ge2$, put
\begin{equation}\label{eq:many-hole-polynomials}
 H_{N,k}(z)=(z^2-1)^N\prod_{j=1}^k(z-j-2-i).
\end{equation}
For every sufficiently large $N$, both $\Sh_{H_{N,k}}$ and
$\AllSh_{H_{N,k}}$ have at least $\binom{k}{2}$ distinct holes.
These examples have $k+2$ distinct root locations and degree $2N+k$.
The same lower bound holds on nonempty open sets of square-free
polynomials with no reflection axis, in the corresponding degrees.
In particular, there is no universal bound on the number of holes.
\end{theorem}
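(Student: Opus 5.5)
The plan is to run the argument of Theorem~\ref{thm:noncontractible} once for each pair of transported clusters, and then show that the resulting $\binom{k}{2}$ witness points lie in pairwise distinct complementary components. Set $x_j=j+2$ for $1\le j\le k$, so $x_1=3>\sqrt2$, and let $B=\{-1,1,b_{x_1},\ldots,b_{x_k}\}$. By Lemma~\ref{lem:quadratic-spikes} and \eqref{eq:hole-pooled-convergence}, both shadows of $H_{N,k}$ converge to $K=[-1,1]\cup\bigcup_j\Gamma_{x_j}$.

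Since $\beta_x=2x/(x^2+2)$ is decreasing for $x>\sqrt2$, larger $x$ gives a root further right but a landing point further left. Hence for $j<l$ the segments $\Gamma_{x_j}$ and $\Gamma_{x_l}$ cross transversely at a point $v_{jl}$ with positive imaginary part. This is the same elementary computation as for $x=2,3$, and both crossing ratios precede the landing ratios. Because $\Imm\theta_x(t)$ is strictly ordered, the crossing $v_{jl}$ is reached by $\theta_{x_j}$ and $\theta_{x_l}$ at different ratios.

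Consider the line arrangement formed by $[-1,1]$ and the $k$ segments. The segments pairwise cross, and I expect to arrange that no three are concurrent. If concurrency fails for the specific points $j+2$, a short explicit check is needed; otherwise the claim can be weakened to distinct faces without a general-position assumption. The planar graph $K$ then has vertices at the $k$ landing points, the $k$ roots, and the $\binom k2$ crossings. Its edges are the subdivided segments together with the $k+1$ pieces of $[-1,1]$. Euler's formula, $V-E+F=2$ with one component, gives the number of bounded faces, which should be at least $\binom{k}{2}$. Each pair $j<l$ bounds a triangle $\mathcal T_{jl}$ with vertices $\beta_{x_l},\beta_{x_j},v_{jl}$, and pick $q_{jl}$ in a bounded face contained in $\mathcal T_{jl}$ adjacent to $[\beta_{x_l},\beta_{x_j}]$ near $v_{jl}$.

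A simpler choice is to take a point $q_{jl}$ just below $v_{jl}$ in the region between the two segments. Distinct crossings lie in distinct faces, since at a crossing the face directly below is bounded by exactly those two segments above it. The next step is to assign $q_{jl}$ to a face $f_{jl}$ injectively. For each $f_{jl}$, take the sections $\gamma_{j,N}$ and $\gamma_{l,N}$ from Lemma~\ref{lem:spike-tracking} with a common $t_0<\min\tau$, and intersect them near $v_{jl}$ by the rectangle-winding argument. Closing the loop at the centroid gives a loop $\mathcal L^{jl}_N\subset\Sh_{H_{N,k}}$.

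By Lemma~\ref{lem:spike-tracking}(ii), the two halves of this loop lie in thin neighborhoods of the trees $[v_{jl},\beta_{x_j}]\cup[-1,1]$ and $[v_{jl},\beta_{x_l}]\cup[-1,1]$. The two-branch logarithm argument then gives winding number $\pm1$ about $q_{jl}$, because the left ray misses one tree and the right ray misses the other. This needs $q_{jl}$ chosen below $v_{jl}$ and above $[-1,1]$. Pooled convergence excludes $q_{jl}$ from both shadows, so each $q_{jl}$ lies in a hole.

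The main obstacle is distinctness: the $\binom k2$ points must lie in different holes of the finite-$N$ shadows, not just in holes. Hausdorff convergence alone does not separate them. To separate $q_{jl}$ from $q_{j'l'}$, I will find a loop in the core winding differently around them. The first thing to try is the loop $\mathcal L^{jl}_N$ itself, checking by the star-convex homotopy argument of Corollary~\ref{cor:thin-shadow-hole} that it has winding number zero about every point outside $\overline{\mathcal T_{jl}}\cup K$. It then suffices to choose each $q_{j'l'}\notin\overline{\mathcal T_{jl}}$ whenever $(j',l')\ne(j,l)$, or at least to have an injective assignment. A workable choice is to order the crossings lexicographically and use nested triangles: if $q_{j'l'}$ lies inside $\mathcal T_{jl}$, apply a triangular (Möbius-type) inversion of the winding matrix, integer combinations of these loops, to separate the points. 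The winding matrix $(\wind(\mathcal L^{jl},q_{j'l'}))$ only needs to have distinct columns, and with $q$ placed directly below each crossing it should be unitriangular for a suitable order.

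Robustness and the square-free, no-reflection perturbations then follow verbatim as in Theorem~\ref{thm:noncontractible}. Only finitely many strict inequalities are involved, and Proposition~\ref{prop:shadow-foundations} transfers the holes to the full shadow.
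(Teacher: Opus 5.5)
Your loop construction, diagonal winding, star-convex vanishing of off-diagonal windings, and distinct-columns conclusion all match the paper. However, the two steps that actually produce $\binom{k}{2}$ \emph{distinct} holes are left as expectations, and both are needed.

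\emph{Non-concurrency.} The abscissae $x_j=j+2$ are fixed by the statement, so nothing can be ``arranged,'' and your fallback does not recover the bound. If three segments passed through one point, their three pair crossings would coincide, and only two bounded faces would lie below that point instead of three. Distinct crossings are also what make the equal-height case of the triangular argument below work. The paper proves non-concurrency directly. A point $X+iY$ lies on the line containing $\Gamma_x$ iff $Yx^3-Xx^2+2x-2X=0$, so three concurrent lines would give three distinct roots $\ge3$ of this cubic. By Vieta their product is twice their sum. Then the reciprocals of the three pairwise products add up to $\frac12$, although each is at most $\frac19$.

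\emph{Triangularity.} You only assert that the winding matrix ``should be unitriangular for a suitable order,'' and the lexicographic order you suggest comes without proof. The missing observation is to order the pairs by apex height $h_p=\Imm v_p$. A closed triangle $\overline{\mathcal T_r}$ meets the horizontal line at its top height only in its apex, so $v_p\notin\overline{\mathcal T_r}$ whenever $r\ne p$ and $h_r\le h_p$; for $h_r=h_p$ this uses distinct crossings. Hence $q_p\in\mathcal T_p$ can be chosen so close to $v_p$ that it avoids these finitely many closed triangles and the graph $G$. Your star-convex argument then gives $\wind(\mathcal L_{r,N},q_p)=0$ for all such $r$. The matrix is therefore triangular with diagonal $-1$, with equal-height blocks diagonal. (A lexicographic order can also be justified for these abscissae, but only after proving that the crossing heights strictly decrease in both abscissae and checking which apexes lie in which triangles; the proposal does neither.) The Euler-formula count of faces of the limit graph is not needed, and by itself it says nothing about the finite-$N$ shadows.
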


\begin{figure}[htbp]
\centering
\includegraphics[width=\textwidth]{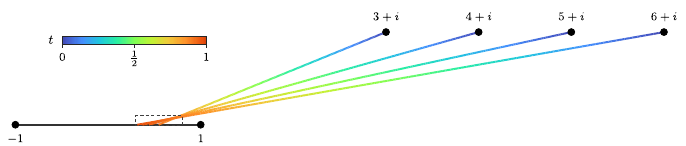}\\[1pt]
{\small (a)}\\[5pt]
\includegraphics[width=\textwidth]{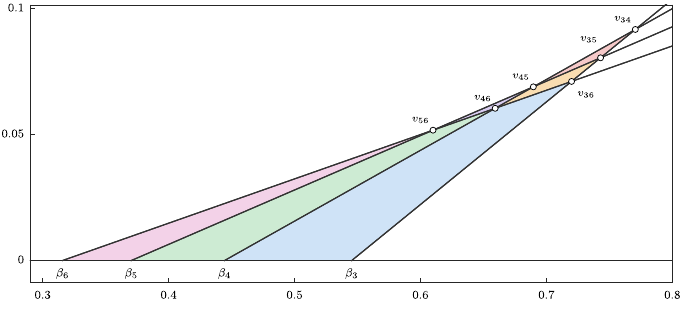}\\[1pt]
{\small (b)}
\caption{The limit graph $G$ in the proof of Theorem~\ref{thm:many-holes}
for $k=4$, that is, for
$H_{N,4}(z)=(z^2-1)^N\prod_{x=3}^{6}(z-x-i)$. (a) Each point of
$\Gamma_x=[\beta_x,x+i]$ is colored by the ratio $t$ at which
$\theta_x$ reaches it. (b) The dashed box in (a), with the vertical
scale doubled: the six crossings $v_{xy}$ of
\eqref{eq:all-hole-crossings} and the $\binom42=6$ bounded faces of $G$
(shaded). Each face lies directly below one crossing, its highest
point; the face points $q_p$ of the proof lie in these faces, near
their apexes.}
\label{fig:many-holes}
\end{figure}

\begin{proof}
Use the preceding construction with $x_j=j+2$. The real landing
points $\beta_{x_j}$ strictly decrease with $j$, whereas the root
abscissae increase. Every two segments therefore cross once,
transversely, at a height strictly between zero and one. Explicitly,
for two abscissae $x<y$, put
\[
 \mathfrak D_{xy}=x^2y^2+2(x^2+xy+y^2).
\]
Their crossing is
\begin{equation}\label{eq:all-hole-crossings}
 v_{xy}=\frac{2xy(x+y)}{\mathfrak D_{xy}}
              +i\frac{2(xy-2)}{\mathfrak D_{xy}}.
\end{equation}
It is reached at the distinct ratios
$\tau_x(1-\Imm v_{xy})$ and $\tau_y(1-\Imm v_{xy})$.
No three of the lines containing the segments $\Gamma_{x_j}$ concur there. Indeed, writing an
intersection as $X+iY$, the equation of the line for abscissa $x$ is
\[
 Yx^3-Xx^2+2x-2X=0.
\]
Three distinct such abscissae, all at least $3$, would be the three
roots of this cubic in $x$. By the sum and product formulas for the
roots, their product would be twice their sum, so the reciprocals of
their three pairwise products would add up to $\frac12$. But each of
these reciprocals is at most $\frac19$, a contradiction. No crossing lies on the base interval or at an
endpoint. Put $G=[-1,1]\cup\bigcup_j\Gamma_{x_j}$. The pair crossings
are distinct, since equality of two distinct pair crossings would give
three concurrent such lines. Figure~\ref{fig:many-holes} shows
$G$ for $k=4$. Index the pairs by $p=(j,j')$,
$j<j'$, and write $v_p=v_{x_jx_{j'}}$ and $h_p=\Imm v_p$. Let
$\mathcal T_p$ be the open triangle with vertices $v_p$,
$\beta_{x_j}$, and $\beta_{x_{j'}}$, and let the closed curve
$\mathcal L_p$ follow segment $j$ downward, the base to segment $j'$,
and segment $j'$ upward. Thus
$\wind(\mathcal L_p,q)=-1$ for $q\in\mathcal T_p$.

For each pair $p$, choose $q_p\in\mathcal T_p$ sufficiently near $v_p$
to avoid every other closed triangle $\overline{\mathcal T_r}$ with
$h_r\le h_p$, and also to avoid $G$. Such a choice is possible:
each closed triangle has only its apex at maximal height, so none
of these other triangles contains $v_p$. Their finite union
is closed, every neighborhood of the apex meets $\mathcal T_p$, and the
finite union of segments $G$ has empty interior.
The square winding-number matrix
\[
 \bigl(\wind(\mathcal L_r,q_p)\bigr)_{r,p}
\]
has diagonal entries $-1$, and its $(r,p)$ entry is zero whenever
$r\ne p$ and $h_r\le h_p$. Ordering pairs by nondecreasing apex
height therefore makes the matrix triangular with nonzero diagonal.
Equal heights cause no difficulty, since all off-diagonal entries
within an equal-height block vanish. The matrix is nonsingular.

Choose a starting ratio $t_0>0$ smaller than every ratio at which a
segment reaches any of its crossings. Lemma~\ref{lem:spike-tracking}
gives $k$ continuous sections. For each pair, the rectangle-winding
argument in the proof of Theorem~\ref{thm:noncontractible} forces their actual
intersection near $v_p$. Concatenate the two suffixes to their
common terminal centroid to form a loop $\mathcal L_{p,N}$ in the core.
All these constructions are simultaneous because there are finitely
many crossings. For each $r$, the filled triangle $\overline{\mathcal T_r}$ together with
the real base interval is compact and star convex. If $r\ne p$ and
$h_r\le h_p$, it avoids $q_p$. Its sufficiently small thickenings
remain star convex and avoid $q_p$; Lemma~\ref{lem:spike-tracking}(ii)
puts the actual loop $\mathcal L_{r,N}$ inside them, so its winding about
$q_p$ is zero.
For the diagonal, the two limiting lower sides lie in opposite
horizontal slit domains about $q_p$. Tracking preserves these slit
conditions, while the crossing stays above $q_p$ and the common
centroid stays below it. The two logarithm branches give winding
minus one. Consequently, for all large $N$ simultaneously,
\[
 \wind(\mathcal L_{p,N},q_p)=-1,\qquad
 \wind(\mathcal L_{r,N},q_p)=0\quad(r\ne p,\ h_r\le h_p).
\]
The actual winding matrix is therefore nonsingular as well.
The points $q_p$ are outside \emph{both} shadows by
\eqref{eq:hole-pooled-convergence}. Each column of the actual winding matrix
is nonzero, so each point is in a bounded complementary component.
Two points in the same complementary component would have identical
windings about every loop in the shadow, giving identical columns.
Nonsingularity excludes this. Thus the indicated bounded components
are distinct for both shadows.

As in the proof of Theorem~\ref{thm:noncontractible}, only finitely many strict
separation and winding conditions are used. They persist on an open
fixed-degree coefficient neighborhood. For $N\ge2$ the only possible
axes preserving the two roots of multiplicity $N$ are the real and
imaginary axes, and neither preserves the remaining upper-half-plane
roots. Passing to square-free polynomials in the open no-axis locus
proves the final assertion.
\end{proof}

Theorems~\ref{thm:noncontractible} and~\ref{thm:many-holes}, together
with Corollary~\ref{cor:thin-shadow-hole}, prove
Theorem~\ref{thm:intro-holes}. The lower bounds hold for every
sufficiently large $N$, although the argument gives no explicit
cutoff. They are consistent with the root-number bound in
Theorem~\ref{thm:atomic-complexity}, since the number of distinct
locations grows with $k$.

These examples also distinguish the individual support paths of
Corollary~\ref{cor:ratio-paths} from a contraction of the core. Every
point has a path to the centroid following increasing ratios, but
those paths cannot always be chosen continuously in their initial
point. The new holes contain no input root and arise from crossings
at different ratios. Finally, transient zeros change neither planar area nor the number
of holes by Proposition~\ref{prop:shadow-foundations}. The constructions above
nevertheless exclude every chosen face point from the full shadow
directly.
\section{Small positive root masses and support strips}
\label{sec:small-atoms}

The hole constructions used only complete zero-set convergence and
continuous paths through the support family. A small \emph{positive}
root mass gives additional local information. It creates an arc of a
fixed-ratio support near the selected transport point. We determine
its scale and density, and give a condition under which varying the
ratio sweeps out planar interior of the core.

For complex vectors $u,v$, write
$\det_{\R}(u,v)=\Imm(\overline u v)$ for their real determinant.
A \emph{support strip} is an open planar set obtained as the image of
a two-parameter family of fixed-ratio support arcs under a
parametrization whose real differential has rank two. The two parameters will be the derivative
ratio and a coordinate along the arc.

\subsection{Local support near a transported small atom}

Let $\eta$ be a probability measure on a finite set $A$, and let
$b_1,\ldots,b_k\notin A$ be distinct. Fix $\omega_j>0$, put
$\omega_{\mathrm{tot}}=\sum_j \omega_j$, and set
\begin{equation}\label{eq:small-atom-law}
 \eta_\varepsilon=(1-\varepsilon \omega_{\mathrm{tot}})\eta
                  +\varepsilon\sum_{j=1}^k \omega_j\delta_{b_j},
 \qquad 0<\varepsilon<\omega_{\mathrm{tot}}^{-1}.
\end{equation}
Thus every added location has positive mass, however small. All the
limiting supports below are supports for this positive-mass law, not
zero sets of individual polynomials of an array.

\begin{theorem}[Persistence of a transported small atom]
\label{thm:small-atom-strip}
Fix $b=b_j$ with $\chi_b:=C_\eta(b)\ne0$. Let $J$ be a compact interval
in $(0,1)$ on a neighborhood of which
\[
 \theta_b(t):=b-\frac{t}{\chi_b}\notin S_{t,\eta},
 \qquad W_{t,\eta}(\theta_b(t))=b.
\]
Assume also that $\theta_b(J)$ avoids $A\cup\{b_1,\ldots,b_k\}$.
Put
\begin{equation}\label{eq:small-atom-scales}
 \mathfrak d_b(t)=1+\frac{tC_\eta'(b)}{\chi_b^2},\qquad
 \mathfrak a_b(t)=\frac1{\chi_b}\sqrt{\frac{t}{\mathfrak d_b(t)}},\qquad
 \lambda_b(t)=\mathfrak d_b(t)\mathfrak a_b(t).
\end{equation}
The selected inverse is simple, so $\mathfrak d_b(t)\ne0$; choose a
real-analytic square root on a neighborhood of $J$.
There are $0<r<2$, $\varepsilon_0>0$, and real-valued functions
$h_\varepsilon(t,x)$, real-analytic in $(t,x,\sqrt\varepsilon)$ through
$\varepsilon=0$, with $h_0=0$, such that
\begin{equation}\label{eq:small-atom-strip-map}
 X_\varepsilon(t,x)=\theta_b(t)+\sqrt{\varepsilon \omega_j}\,
           \lambda_b(t)\bigl(x+i h_\varepsilon(t,x)\bigr)
       \in S_{t,\eta_\varepsilon}
 \quad(t\in J,\ |x|<r,
               \ 0<\varepsilon<\varepsilon_0).
\end{equation}
The center curve $\gamma_{b,\varepsilon}(t):=X_\varepsilon(t,0)$ satisfies
\begin{equation}\label{eq:small-atom-center}
 \|\gamma_{b,\varepsilon}-\theta_b\|_{C^1(J)}=O(\varepsilon).
\end{equation}
If
\begin{equation}\label{eq:small-atom-transversality}
 \det_{\R}\bigl(\theta_b'(t),\lambda_b(t)\bigr)\ne0
                  \quad(t\in J),
\end{equation}
then these center curves lie in $\operatorname{int}S_{\eta_\varepsilon}$
for all sufficiently small $\varepsilon>0$. More precisely, the map in
\eqref{eq:small-atom-strip-map} has rank two near $J\times\{0\}$.
These interior neighborhoods persist under small coefficient
perturbations whenever $\eta_\varepsilon$ is a polynomial root law.
No simplicity condition on the input roots or on unselected saddles
is required.
\end{theorem}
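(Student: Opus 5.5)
The plan is to reduce everything to a local two-saddle picture near $b$, rescaled by $\sqrt{\varepsilon}$. At $\varepsilon=0$ the hypotheses give, for $t$ near $J$, a locally persistent one-saddle separating contour at $z=\theta_b(t)$ whose simple connecting saddle is $b$ (Propositions~\ref{prop:weighted-contours} and~\ref{prop:weighted-foundations}). There is also a uniform strict height gap off a small disk $\mathcal D_b$ about $b$. For $\varepsilon>0$ the landscape $L_{\eta_\varepsilon,z,t}$ differs from $L_{\eta,z,t}$ by $O(\varepsilon)$ uniformly off small disks about $b_1,\dots,b_k$. It also acquires a weak negative end of strength $\varepsilon\omega_j$ at $b$.

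\emph{Local model.} Write $w=b+\sqrt{\varepsilon\omega_j}\,\mathfrak a_b(t)\,v$ and $z=\theta_b(t)+\sqrt{\varepsilon\omega_j}\,\lambda_b(t)\,s$. Expand $C_{\eta_\varepsilon}(b+u)=\chi_b+C_\eta'(b)u+\varepsilon\omega_j/u+O(u^2+\varepsilon)$. Substituting into the saddle equation $(w-z)C_{\eta_\varepsilon}(w)=t$, I will check that the definitions \eqref{eq:small-atom-scales} are exactly the normalizations that turn the leading order into the Joukowski equation $v+1/v=s$. The corrections form a power series in $\sqrt\varepsilon$, analytic in $(t,v,s)$. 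So inside $\mathcal D_b$ there are exactly two ordinary saddles $W_\pm(z,t,\varepsilon)$, and they are real-analytic in $\sqrt\varepsilon$ away from the two branch points $s\approx\pm2$. The strict gap outside $\mathcal D_b$ persists for small $\varepsilon$. Hence the barrier at $z$ equals the height of a connecting saddle in $\mathcal D_b$. By the min--max identity \eqref{eq:weighted-minmax}, compared with the modified $\varepsilon=0$ contour passing through $\mathcal D_b$, I expect
\[
 \Blev_{\eta_\varepsilon}(z,t)=\min\bigl\{L(W_+),L(W_-)\bigr\}
\]
near the arc, that is, the lower of the two barrier passes around the small negative end.

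\emph{The support arc.} By \eqref{eq:weighted-potential}, $S_{t,\eta_\varepsilon}$ near $\theta_b(t)$ is exactly where this minimum fails to be harmonic. This is the set where $\Ree\bigl(\Phi(W_+)-\Phi(W_-)\bigr)=0$ and the two gradients differ. In the rescaled model, $\Phi(W_+)-\Phi(W_-)$ has leading term $\varepsilon\omega_j\,g(s)$ with $g(s)=s\sqrt{s^2-4}/2-2\log\bigl((s+\sqrt{s^2-4})/2\bigr)$. Its real part vanishes on $[-2,2]$, with nonzero normal derivative for $|x|<2$. The implicit function theorem in $(t,x,\sqrt\varepsilon)$ then gives $h_\varepsilon$ with $h_0=0$ and proves \eqref{eq:small-atom-strip-map} for any fixed $r<2$. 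The gradient jump equals $2\partial_z$ of the difference, which is $\sqrt{s^2-4}$ to leading order. This gives positive density, i.e.\ points genuinely in the support, and after normalization yields the semicircle profile.

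For \eqref{eq:small-atom-center} I will show that the $\sqrt\varepsilon$-order correction to $h_\varepsilon(t,0)$ vanishes. The first correction to the Joukowski model is the $C_\eta''(b)u^2$ term, which is odd under $v\mapsto-v$, $s\mapsto-s$, so it contributes no shift at $x=0$. Then $h_\varepsilon(t,0)=O(\sqrt\varepsilon)$, and multiplying by the prefactor $\sqrt\varepsilon$ gives an $O(\varepsilon)$ shift of the center, in $C^1$ by analyticity in $t$.

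\emph{Strip and stability.} Compute $\partial_tX_\varepsilon=\theta_b'+O(\sqrt\varepsilon)$ and $\partial_xX_\varepsilon=\sqrt{\varepsilon\omega_j}\,\lambda_b\bigl(1+O(\sqrt\varepsilon)\bigr)$. The real Jacobian is therefore $\sqrt{\varepsilon\omega_j}\bigl(\det_\R(\theta_b',\lambda_b)+O(\sqrt\varepsilon)\bigr)$, which is nonzero under \eqref{eq:small-atom-transversality}. The inverse function theorem (uniform on compact $J$) makes the image of a neighborhood of $J\times\{0\}$ open. This open set lies in $\bigcup_tS_{t,\eta_\varepsilon}\subseteq S_{\eta_\varepsilon}$, so the center curves are interior points. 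For polynomial root laws, coefficient perturbations move the landscape data continuously (Proposition~\ref{prop:weighted-contours}), and the same implicit-function construction gives nearby $X$ with rank two.

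The main obstacle is the global selection step: showing that near $\theta_b(t)$ the barrier is governed by the two local saddles, with the lower one connecting. I will prove this by cutting the $\varepsilon=0$ contour at $\mathcal D_b$ and rerouting it inside $\mathcal D_b$ around the new end through the appropriate saddle, which gives the upper bound. The lower bound comes from a path between the positive ends through the same saddle, using the Joukowski level-set geometry. This uses only the simplicity of $b$ as a selected saddle, not simplicity of unselected saddles.
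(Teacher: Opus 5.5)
Your outline matches the paper's. You rescale by $\sigma=\sqrt{\varepsilon\omega_j}$ to the model $\log y+y^2/2-\xi y$, whose saddle equation is your Joukowski equation. You then take two local saddles near $y=\pm i$, obtain the tie curve from the implicit function theorem, and check the rank-two Jacobian. The gap is in the selection step, which you rightly flag as the crux: your formula is wrong and the argument you sketch for it cannot work. Near the arc the barrier is the \emph{maximum} $\max\{L(W_+),L(W_-)\}$, not the minimum.

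In the scaled coordinate, the $\varepsilon=0$ contour runs along the descending axis $y\in i\R$ of the base saddle. The new negative end $y=0$ sits on that axis \emph{between} the two saddles. Along $y=i\varrho$ the model height $\log|\varrho|-\varrho^2/2$ tends to $-\infty$ at $\varrho=0$ and at $\varrho=\pm\infty$, and its only maxima are at $\varrho=\pm1$. So the two saddles are passes in series, not alternative routes around the small end. At each saddle the horizontal path $y=x\pm i$ increases into both higher sectors. Hence at every level below the larger saddle height, the superlevel set joins $z$ to $\infty$ through that saddle. No contour rerouted ``through the appropriate saddle'' can therefore give $\Blev\le\min$. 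The correct upper bound, $\Blev\le\max$, comes from the imaginary-axis passage with a small semicircle detour around $y=0$; both saddles are connecting at a tie.

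Your formula also contradicts your next step. A minimum of two harmonic functions is superharmonic, so it would give $\Delta U_{\mu_{t,\eta_\varepsilon}}\le0$ across the tie curve. That is incompatible with $\Delta U=2\pi\mu\ge0$ and with the positive density you assert.

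Because max and min have the same tie set, the rest of your plan survives once you correct this. You still need to match the $\sqrt\varepsilon$-scale picture to the fixed base contour, which your plan leaves implicit. The strict gap holds only off a fixed disk about $b$, while the Joukowski geometry controls only $|w-b|\le R\sigma$. On the intermediate annulus you must show that the real phase decreases along the contour passages and increases along the outgoing paths. The paper does this in a Morse coordinate $v$: with $\varrho=|v|/\sigma$, the radial derivative is $\mp\sigma^2\varrho+O(\sigma^2/\varrho+\sigma^2|\xi|+\sigma^3)$.

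A smaller point concerns your symmetry argument for \eqref{eq:small-atom-center}, which is incorrect. The $O(\sigma)$ corrections to the model are odd in $(y,\xi)$. At $\xi=0$ they therefore enter the critical values at $y_\pm=\pm i$ with opposite signs and \emph{add} in $\psi_+-\psi_-$. For example, a term $c\sigma y^3$ contributes $2\sigma\Imm c$ to $\Ree(\psi_+-\psi_-)$, so $h_\varepsilon(t,0)$ generically has a nonzero term of order $\sqrt\varepsilon$. The argument is not needed, though. The bound $h_\varepsilon=O(\sqrt\varepsilon)$ in $C^1$, multiplied by the prefactor $\sqrt{\varepsilon\omega_j}$, already gives the $O(\varepsilon)$ estimate; this is exactly how the paper concludes.
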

\begin{proof}
\emph{The local phase.}
All constructions take place on a slightly larger compact interval
than $J$. They can therefore be made uniform by a finite cover in
$t$. Write $\omega=\omega_j$, $\chi=\chi_b$, $\mathfrak d=\mathfrak d_b(t)$, $\mathfrak a=\mathfrak a_b(t)$,
$\lambda=\lambda_b(t)$, and $\sigma=\sqrt{\varepsilon \omega}$.
Choose local analytic logarithms and put
\[
 V(w)=\sum_{a_0\in A}\eta(\{a_0\})\log(w-a_0),\qquad
 V_\varepsilon^{\mathrm{reg}}(w)
   =(1-\varepsilon \omega_{\mathrm{tot}})V(w)
                   +\varepsilon\sum_{\ell\ne j}\omega_\ell\log(w-b_\ell).
\]
The second function is holomorphic near $b$. The perturbed phase is
\[
 \Phi_\varepsilon(w,z,t)=V_\varepsilon^{\mathrm{reg}}(w)
                          +\varepsilon \omega\log(w-b)-t\log(w-z).
\]
At $\varepsilon=0$ and $z=\theta_b(t)$ the selected saddle is $b$,
and
\begin{equation}\label{eq:small-atom-Hessian}
 \Phi_{0,ww}(b,\theta_b(t),t)=\frac{\chi^2\mathfrak d}{t}=\mathfrak a^{-2},
 \qquad
 \Phi_{0,wz}(b,\theta_b(t),t)=-\frac{\chi^2}{t}.
\end{equation}
Make the substitutions $w=b+\sigma \mathfrak a y$, $z=\theta_b(t)+\sigma\lambda\xi$.
After subtracting the common term
$V_\varepsilon^{\mathrm{reg}}(b)-t\log(b-z)+\varepsilon \omega\log(\sigma\mathfrak a)$
and dividing by $\varepsilon \omega=\sigma^2$, Taylor's formula gives
\begin{equation}\label{eq:small-atom-model}
 \Psi_\sigma(y,\xi,t)=\log y+\frac{y^2}{2}-\xi y+O(\sigma).
\end{equation}
The remainder extends analytically through $\sigma=0$, with all local
derivatives, on compact subsets with $y\ne0$. In particular this
statement is uniform near $(y,\xi)=(i,0)$ and $(-i,0)$.
The model has two simple saddles there. They continue as
$y_+(\xi,t,\sigma)$ and $y_-(\xi,t,\sigma)$, with $y_\pm(0,t,0)=\pm i$.

\smallskip\noindent\emph{Selection of the two-saddle maximum.}
We verify that both local saddles are selected at their common
height; an expansion of the saddle equation alone would not prove
\eqref{eq:small-atom-strip-map}. At $\varepsilon=0$, the observation
point $\theta_b(t)$ has a one-saddle separating contour through $b$ and increasing paths from its two higher sectors
to the two positive ends. Their existence follows from the weighted
harmonic-barrier construction in
Proposition~\ref{prop:weighted-foundations}. Away from $b$, deform
the contour and the compact parts of those paths to avoid every
other added location. Their strict height gaps are preserved.
For fixed small neighborhoods of the positive ends the signs of
the end strengths remain unchanged for small $\varepsilon$.

Here are local replacements for the passage through $b$. At
$\xi=\sigma=0$ the real part of the model in
\eqref{eq:small-atom-model} is
\[
 \mathscr L(y)=\log|y|+\tfrac12\Ree(y^2).
\]
On the imaginary axis $y=i\varrho$ it equals
$\log|\varrho|-\varrho^2/2$, with its only maxima at $\varrho=\pm1$, both of
height $-1/2$. Join the portions of this axis on either side of
zero by a semicircle of radius $1/4$. On that semicircle
$\mathscr L\le\log(1/4)+1/32<-1/2$. This is a contour passage through
both saddles, below their height everywhere else. At either saddle,
the horizontal path $y=x\pm i$ satisfies
\[
 \mathscr L(x\pm i)=\frac{x^2-1}{2}+\frac12\log(1+x^2),
\]
which increases strictly with $|x|$. Its two ends lead to the
opposite increasing sectors of the base saddle.

For completeness, these replacements can be matched to the fixed
base contour and paths despite their different spatial scales.
Use a Morse coordinate $v$ with
$\Phi_0(w,\theta_b(t),t)-\Phi_0(b,\theta_b(t),t)=v^2/2$ and
$v'(b)=\mathfrak a^{-1}$. In the local replacements $v/\sigma=y+O(\sigma)$.
Choose a large fixed $R$, then a small fixed Morse radius $\delta$.
Between $R\le |v|/\sigma\le\delta/\sigma$, the derivative of the real
perturbed phase along the decreasing imaginary passages, after
writing $|v|/\sigma=\varrho$, is
\[
 -\sigma^2\varrho+O(\sigma^2/\varrho+\sigma^2|\xi|+\sigma^3).
\]
Along the outgoing passages in either increasing sector it is
$\sigma^2\varrho+O(\sigma^2/\varrho+\sigma^2|\xi|+\sigma^3)$, with an immaterial bounded
imaginary offset in the scaled coordinate. These estimates follow
from the exact quadratic base phase, the logarithmic term, and
uniform bounds for the remaining analytic derivatives. Hence the real
phase is strictly decreasing and increasing along these passages,
respectively, once $R$ is large
and the $\xi$-disk and $\sigma$ are small. At the fixed Morse radius they
join the previously chosen contour and paths, whose height margins
are fixed. On the rest of these compact curves the perturbation
converges uniformly to zero.

On the bounded scaled pieces, the two simple saddles and their
increasing and decreasing directions persist by the Morse lemma.
The modified piecewise smooth loop retains winding one about $z$
and has maximum equal to the larger of the two local saddle heights.
Each saddle also has an increasing path to each positive end, with
minimum equal to its own height. The upper and lower barrier
inequalities therefore give exactly the maximum of these two heights.
The strict height inequality on the rest of the loop excludes every
other selected saddle, while at a common height the increasing paths
select both local saddles. Now apply the weighted barrier-contour
construction of Proposition~\ref{prop:weighted-contours} to the
perturbed law. It gives a positively oriented simple Jordan contour
at this same maximum, through every selected saddle and strictly
lower elsewhere. At a tie, that is, when the two local saddle heights
are equal, it passes through both local saddles,
each exactly once, smoothly and regularly, with a nondegenerate
maximum there. Compactness gives a strict height gap off any fixed
neighborhoods of the selected saddles. The parameter box, including
one fixed small $\xi$-disk, can be chosen uniformly for $t\in J$
by a finite cover.

\smallskip\noindent\emph{The support arc and its projection.}
Let $\psi_\pm(\xi,t,\sigma)=\Psi_\sigma(y_\pm(\xi,t,\sigma),\xi,t)$ and
$g_\sigma=\Ree(\psi_+-\psi_-)$. At $\sigma=0$ the real $\xi$-axis is a
tie curve near zero. Write $\xi=\xi_1+i\xi_2$ and
$\nabla_\xi=(\partial_{\xi_1},\partial_{\xi_2})$ for the real gradient.
Since the derivative of a critical value in $\xi$ is $-y$ in the model,
\[
 \partial_{\xi_2} g_0(0,t)=2.
\]
The real-analytic implicit-function theorem writes the selected tie
as $\xi=x+i h_\varepsilon(t,x)$, with
$h_\varepsilon=O(\sqrt\varepsilon)$ in $C^1$ on smaller rectangles.
The difference of the two critical heights has nonzero spatial
gradient there. The Laplacian of the selected
maximum has positive arclength density there, so the weighted
potential formula places every point of this tie in
$S_{t,\eta_\varepsilon}$. This proves
\eqref{eq:small-atom-strip-map} and \eqref{eq:small-atom-center}.

At $x=0$, differentiation gives
\begin{equation}\label{eq:small-atom-strip-Jacobian}
 \det_{\R}\bigl((X_\varepsilon)_t,(X_\varepsilon)_x\bigr)
   =\sqrt{\varepsilon \omega}\,
       \det_{\R}\bigl(\theta_b'(t),\lambda_b(t)\bigr)+O(\varepsilon).
\end{equation}
Under \eqref{eq:small-atom-transversality}, compactness makes this
nonzero near $J\times\{0\}$ for small $\varepsilon$. The inverse-function
theorem then gives open subsets of the limiting core $S_{\eta_\varepsilon}$ around the center
curve. At fixed $\varepsilon>0$, both the two-simple-saddle selected
maximum and this rank condition persist in the polynomial
coefficients. A finite cover of a compact center-curve portion gives
persistence of its interior neighborhoods, including under splitting
of multiple input roots.
\end{proof}

\begin{corollary}[The local semicircle profile of a small root mass]
\label{cor:small-atom-density}
Let $r$ and $X_\varepsilon$ be as in Theorem~\ref{thm:small-atom-strip}.
For fixed $t$, the map $x\mapsto X_\varepsilon(t,x)$ is one-to-one.
Define a measure on $(-r,r)$ by
\[
 \mathfrak m_{\varepsilon,t}(E)=\mu_{t,\eta_\varepsilon}
       \bigl(\{X_\varepsilon(t,x):x\in E\}\bigr)
       \qquad(E\subset(-r,r)\text{ Borel}).
\]
It is absolutely continuous, and on every smaller compact rectangle
in $(t,x)$,
\begin{equation}\label{eq:small-atom-density}
 \frac{\dd \mathfrak m_{\varepsilon,t}}{\dd x}(x)
   =\frac{\varepsilon \omega_j}{2\pi(1-t)}
          \left(\sqrt{4-x^2}+O(\sqrt\varepsilon)\right).
\end{equation}
Thus, after normalization by $(1-t)/(\varepsilon \omega_j)$, the local
profile converges to the restriction of the \emph{semicircle law},
the probability measure with density
$(2\pi)^{-1}\sqrt{(4-x^2)_+}$ on the real line.
This is a local assertion on $(-r,r)$; no conclusion at the two
endpoints $x=\pm2$ is needed.
\end{corollary}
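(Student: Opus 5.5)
The plan is to read the density off the jump in the normal derivative of the logarithmic potential across the tie curve built in Theorem~\ref{thm:small-atom-strip}, using the identity \eqref{eq:weighted-potential}
\[
 U_{\mu_{t,\eta_\varepsilon}}=\frac{\Blev_{\eta_\varepsilon}(\cdot,t)-\mathfrak h(t)}{1-t}.
\]
We keep the notation of that proof: $\sigma=\sqrt{\varepsilon\omega_j}$, the observation coordinate $z=\theta_b(t)+\sigma\lambda\xi$, and the two local saddles $y_\pm(\xi,t,\sigma)$ with critical values $\psi_\pm$.

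\emph{Structure of the support near the arc.} The two-saddle selection argument in the proof of Theorem~\ref{thm:small-atom-strip} gives, on the fixed $\xi$-box and uniformly for $t$ in a slightly larger interval,
\[
 \Blev_{\eta_\varepsilon}(z,t)=c_\varepsilon(z,t)+\sigma^2\max\{\Ree\psi_+,\Ree\psi_-\},
\]
where $c_\varepsilon(z,t)=\Ree\bigl(V^{\mathrm{reg}}_\varepsilon(b)-t\log(b-z)\bigr)+\text{const}$ is harmonic in $z$. Off the tie curve $g_\sigma=0$, exactly one local saddle is selected and it is simple. There $\Blev$ is harmonic, so $\mu_{t,\eta_\varepsilon}$ has no mass in the box except on the tie curve.

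Since $\xi=x+ih_\varepsilon(t,x)$ with $h_\varepsilon=O(\sqrt\varepsilon)$ in $C^1$, the map $x\mapsto X_\varepsilon(t,x)$ is a small perturbation of the injective affine map $x\mapsto\theta_b+\sigma\lambda x$. It is therefore one-to-one, and its image is a real-analytic arc. The measure $\mathfrak m_{\varepsilon,t}$ is thus $(2\pi)^{-1}$ times the jump of the normal derivative of $U_{\mu_{t,\eta_\varepsilon}}$, computed with $\Delta U=2\pi\mu$, integrated against arclength. This already shows that it is absolutely continuous.

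\emph{Computing the jump.} For the maximum of two harmonic functions $\Ree f_\pm$, with $f_\pm$ holomorphic, the jump of the normal derivative across the tie curve equals the modulus of the real gradient of $\Ree(f_+-f_-)$, namely $|f_+'-f_-'|$. Here $f_\pm=\sigma^2\psi_\pm$ as functions of $z$.

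By the envelope identity already used in the proof of Theorem~\ref{thm:small-atom-strip}, $\partial_\xi\psi_\pm=-y_\pm+O(\sigma)$. Since $\partial_z=(\sigma\lambda)^{-1}\partial_\xi$, the jump equals
\[
 \frac{\sigma^2}{1-t}\cdot\frac{|y_+-y_-|+O(\sigma)}{\sigma|\lambda|}.
\]
The arclength element along the arc is
\[
 |\partial_x X_\varepsilon|\,dx=\sigma|\lambda|\bigl(1+O(\sqrt\varepsilon)\bigr)\,dx.
\]
Multiplying the two, the factors $\sigma|\lambda|$ cancel, and
\[
 \frac{d\mathfrak m_{\varepsilon,t}}{dx}=\frac{\varepsilon\omega_j}{2\pi(1-t)}\bigl(|y_+-y_-|+O(\sqrt\varepsilon)\bigr).
\]

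\emph{Model evaluation.} The model saddle equation from \eqref{eq:small-atom-model} is $1/y+y-\xi=0$, that is, $y^2-\xi y+1=0$. Its roots are
\[
 y_\pm=\tfrac12\Bigl(\xi\pm i\sqrt{4-\xi^2}\Bigr),
\]
continuing the values $\pm i$ at $\xi=0$. Hence $|y_+-y_-|=\sqrt{4-x^2}$ at $\xi=x$. The correction from $\xi=x+ih_\varepsilon$, and from the $O(\sigma)$ terms in the saddles, is $O(\sqrt\varepsilon)$ uniformly on compact rectangles with $|x|<r<2$. Here $\sqrt{4-x^2}$ is bounded below, so all perturbations are analytic in $\sigma$. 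This gives \eqref{eq:small-atom-density}.

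\emph{Main obstacle.} The key step is the first one: justifying that, in the box, $\Blev$ is exactly the maximum of the two local critical heights plus a common harmonic term. This is what makes the jump formula legitimate and rules out singular or extra mass. I would take it directly from the selection argument in the proof of Theorem~\ref{thm:small-atom-strip}, which produces contours and increasing paths giving matching upper and lower barrier bounds uniformly on the box. The remaining work is bookkeeping of the scalings $\sigma$ and $\lambda$, which cancel as shown above.
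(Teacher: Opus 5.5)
Your proposal is correct and follows the paper's proof essentially step for step. The density is the jump of the normal derivative of $(\Blev_{\eta_\varepsilon}(\cdot,t)-\mathfrak h(t))/(1-t)$ across the tie curve, evaluated through the envelope identity $\partial_\xi\psi_\pm=-y_\pm+O(\sigma)$ with the model saddles $y_\pm=\tfrac12(\xi\pm i\sqrt{4-\xi^2})$; the factors $\sigma|\lambda_b|$ from the gradient and from arclength then cancel. You also justify injectivity and absolute continuity, which the paper leaves implicit, and those arguments are fine — injectivity is even immediate, since $\Ree(x+ih_\varepsilon(t,x))=x$ and $\lambda_b\ne0$.
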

\begin{proof}
In the notation of the preceding proof the difference of the two
physical critical heights is $\sigma^2g_\sigma$, where $\sigma^2=\varepsilon \omega_j$.
Its spatial gradient has modulus
$\sigma|\lambda_b|^{-1}|\nabla_\xi g_\sigma|$. Arclength along the tie is
$\sigma|\lambda_b|\sqrt{1+(\partial_xh_\varepsilon)^2}\,\dd x$.
The distributional Laplacian of a maximum of two harmonic functions
has density equal to the jump in normal derivative; the potential
normalization divides this by $2\pi(1-t)$. Consequently
\[
 \frac{\dd \mathfrak m_{\varepsilon,t}}{\dd x}
 =\frac{\sigma^2}{2\pi(1-t)}
  |\nabla_\xi g_\sigma(x+i h_\varepsilon,t)|
                   \sqrt{1+(\partial_xh_\varepsilon)^2}.
\]
At $\sigma=0$ the two model saddles over real $x$ are
$y_\pm=(x\pm i\sqrt{4-x^2})/2$. The difference of their critical
value derivatives is $-y_++y_-=-i\sqrt{4-x^2}$.
The analytic estimates in the theorem give
$|\nabla_\xi g_\sigma|=\sqrt{4-x^2}+O(\sigma)$ and
$\partial_xh_\varepsilon=O(\sigma)$, uniformly on compact rectangles.
This proves \eqref{eq:small-atom-density}.
\end{proof}

Figure~\ref{fig:small-mass} illustrates Theorem~\ref{thm:small-atom-strip}
and Corollary~\ref{cor:small-atom-density} for $\eta=\eta_0$, the base
measure of Section~\ref{sec:holes}, and one added location $b=2+i$.

\begin{figure}[htbp]
\centering
\includegraphics[width=0.7229\textwidth]{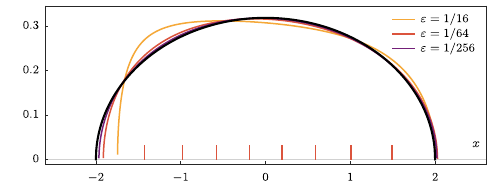}\\[1pt]
{\small (a)}\\[5pt]
\includegraphics[width=0.7229\textwidth]{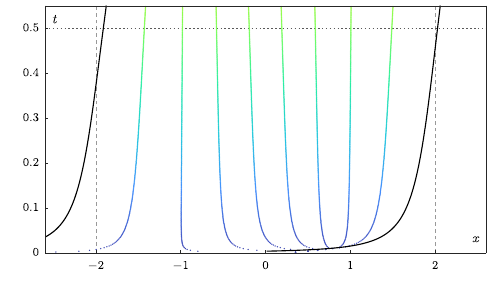}\\[1pt]
{\small (b)}
\caption{A small positive mass at $b=2+i$ over $\eta_0$, in the
coordinates $z=\theta_b(t)+\sqrt\varepsilon\,\lambda_b(t)(x+iy)$ of
Theorem~\ref{thm:small-atom-strip} (with $k=1$, $b_1=b$, $\omega_1=1$). (a) The density of
$\mu_{1/2,\eta_\varepsilon}$ along its arc near $\theta_b(\frac12)$,
normalized as in Corollary~\ref{cor:small-atom-density}, for
$\varepsilon=\frac1{16},\frac1{64},\frac1{256}$, and the semicircle
density $\sqrt{4-x^2}/(2\pi)$ (black). (b) The zeros of all derivatives
$H^{(m)}$ of $H(z)=(z^2-1)^{252}(z-2-i)^8$ with $|x+iy|<3$, at height
$t=m/D$, $D=512$; the root law of $H$ is $\eta_\varepsilon$ with
$\varepsilon=\frac1{64}$. Each order contributes eight zeros. Away from
$t=0$ they keep nearly fixed positions $x$ (ticks in (a): $t=\frac12$,
dotted), between the paths of the endpoints of the arcs of
$S_{t,\eta_\varepsilon}$ (black); dashed: $x=\pm2$.}
\label{fig:small-mass}
\end{figure}

\subsection{Transversality for the quadratic base}

\begin{lemma}[Automatic transversality over two real roots]
\label{lem:quadratic-small-atoms}
For $\eta_0=(\delta_{-1}+\delta_1)/2$ and $b=x+iy$ with $x,y>0$,
the selected path of Lemma~\ref{lem:quadratic-transports} satisfies
\eqref{eq:small-atom-transversality} throughout $0<t<\tau_b$.
Consequently Theorem~\ref{thm:small-atom-strip} applies on every
compact subinterval for which the path avoids the finitely many
added locations; no further selection or transversality test is
required.
\end{lemma}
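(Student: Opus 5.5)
The plan is a direct computation: for $\eta_0$ every quantity in \eqref{eq:small-atom-scales} is explicit, so I would write $\lambda_b(t)$ as a scalar multiple of $\theta_b'(t)$ and check that this scalar is never real.

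\emph{Step 1: the explicit data.} I would first record the data for $\eta_0$. We have
\[
 C_{\eta_0}(w)=\frac{w}{w^2-1},\qquad
 C_{\eta_0}'(w)=-\frac{w^2+1}{(w^2-1)^2}.
\]
Hence $\chi_b=b/(b^2-1)\ne0$, since $b\ne0$ and $b\ne\pm1$. It follows that
\[
 \frac{C_{\eta_0}'(b)}{\chi_b^2}=-\frac{b^2+1}{b^2},
 \qquad\text{so}\qquad
 \mathfrak d_b(t)=1-t-\frac{t}{b^2}.
\]
From $\theta_b(t)=(1-t)b+t/b$ (Lemma~\ref{lem:quadratic-transports}) we get
\[
 \theta_b'(t)=\frac1b-b=-\frac{b^2-1}{b},
\]
which is nonzero and independent of $t$. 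Substituting into \eqref{eq:small-atom-scales} gives
\[
 \lambda_b(t)=\mathfrak d_b\,\mathfrak a_b
 =\frac{b^2-1}{b}\sqrt{t\,\mathfrak d_b(t)} .
\]
The square root is the real-analytic branch fixed in Theorem~\ref{thm:small-atom-strip}. Therefore $\lambda_b(t)=-\sqrt{t\,\mathfrak d_b(t)}\,\theta_b'(t)$.

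\emph{Step 2: reduction of the determinant.} Since $\det_{\R}(u,v)=\Imm(\overline u v)$, Step~1 gives
\[
 \det_{\R}\bigl(\theta_b',\lambda_b\bigr)
 =-|\theta_b'|^2\,\Imm\sqrt{t\,\mathfrak d_b(t)} .
\]
So \eqref{eq:small-atom-transversality} is equivalent to $\sqrt{t\mathfrak d_b(t)}\notin\R$. If the root were real, its square $t\mathfrak d_b(t)$ would be real. It therefore suffices, for either branch, to show $\Imm\mathfrak d_b(t)\ne0$ for $t>0$. This holds because
\[
 \Imm\mathfrak d_b(t)=-t\,\Imm(b^{-2})=\frac{t\,\Imm(b^2)}{|b|^4}=\frac{2xyt}{|b|^4}>0
\]
for $x,y>0$. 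In particular $\mathfrak d_b$ never vanishes on $(0,\tau_b)$, consistent with simplicity of the selected inverse.

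\emph{Step 3: the hypotheses of Theorem~\ref{thm:small-atom-strip}.} On $0<t<\tau_b$, Lemma~\ref{lem:quadratic-transports} gives $\theta_b(t)\notin S_{t,\eta_0}$ and $W_{t,\eta_0}(\theta_b(t))=b$. These properties are open in $t$, so they hold on a neighborhood of any compact $J\subset(0,\tau_b)$. The path stays in the open upper half-plane, so it avoids $A=\{\pm1\}$. Avoiding the other added locations is exactly the stated proviso. Thus no selection test beyond that lemma is needed.

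I do not expect a real obstacle here. The only delicate point is the branch of $\sqrt{\mathfrak d_b}$, and the argument in Step~2 handles it because nonreality of $t\mathfrak d_b$ excludes a real root for every branch.
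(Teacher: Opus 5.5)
Your proposal is correct and is the paper's argument: both compute $\mathfrak d_b(t)=1-t(1+b^{-2})$, show $\Imm\mathfrak d_b(t)>0$, and conclude from $(\lambda_b/\theta_b')^2=t\,\mathfrak d_b(t)$ that $\lambda_b/\theta_b'$ is nonreal for either branch, with the remaining hypotheses taken from Lemma~\ref{lem:quadratic-transports}. Your explicit identity $\det_{\R}(\theta_b',\lambda_b)=-|\theta_b'|^2\Imm\sqrt{t\,\mathfrak d_b(t)}$ spells out what the paper calls ``precisely the transversality condition.'' There, your $\sqrt{t\,\mathfrak d_b}$ should be read as the branch $\mathfrak d_b\sqrt{t/\mathfrak d_b}$ induced by the theorem's choice of square root, which your either-branch argument already covers.
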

\begin{proof}
The saddle derivative in \eqref{eq:small-atom-scales} is
\[
 \mathfrak d_b(t)=1-t\left(1+\frac1{b^2}\right).
\]
Its imaginary part is strictly positive for $x,y,t>0$. For either
square-root choice,
\[
 \left(\frac{\lambda_b(t)}{\theta_b'(t)}\right)^2=t\mathfrak d_b(t).
\]
The square has strictly positive imaginary part, so the ratio is nonreal,
which is precisely
\eqref{eq:small-atom-transversality}. The remaining hypotheses were
verified in Lemma~\ref{lem:quadratic-transports}.
\end{proof}

\begin{corollary}[Interior crossings at different ratios]
\label{cor:small-atom-crossing}
Apply Theorem~\ref{thm:small-atom-strip} to two added locations $b_j$
and $b_{j'}$, with compact ratio intervals having interior points $t_j$
and $t_{j'}$. Suppose
\[
 \theta_{b_j}(t_j)=\theta_{b_{j'}}(t_{j'})=z_*,\qquad
 \det_{\R}\bigl(\theta_{b_j}'(t_j),\theta_{b_{j'}}'(t_{j'})\bigr)\ne0,
\]
and suppose the strip-transversality condition
\eqref{eq:small-atom-transversality} holds at both points.
For all sufficiently small $\varepsilon>0$, the two center curves have
an intersection $z_\varepsilon$ in $\operatorname{int}S_{\eta_\varepsilon}$,
at ratios $t_{j,\varepsilon}$ and $t_{j',\varepsilon}$, with
\[
 z_\varepsilon=z_*+O(\varepsilon),\qquad
 t_{j,\varepsilon}=t_j+O(\varepsilon),\qquad
 t_{j',\varepsilon}=t_{j'}+O(\varepsilon).
\]
The intersection is transverse and persists under sufficiently small
coefficient perturbations when $\eta_\varepsilon$ is a polynomial root law.
\end{corollary}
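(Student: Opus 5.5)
The plan is to reduce the statement to an application of the real-analytic implicit function theorem to the two center curves, and then to use the interior assertion of Theorem~\ref{thm:small-atom-strip}.

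First, by Theorem~\ref{thm:small-atom-strip} applied separately to $b_j$ and $b_{j'}$ on their compact ratio intervals, we have center curves $\gamma_{j,\varepsilon}(t)=X^{(j)}_\varepsilon(t,0)$ and $\gamma_{j',\varepsilon}(t')=X^{(j')}_\varepsilon(t',0)$. These are real-analytic in $(t,\sqrt\varepsilon)$. By \eqref{eq:small-atom-center} they are $O(\varepsilon)$-close to $\theta_{b_j}$ and $\theta_{b_{j'}}$ in $C^1$. The closeness is only $O(\varepsilon)$ and not $O(\sqrt\varepsilon)$, since the $\sqrt\varepsilon$ scale enters only through the transverse coordinate $x$, which is fixed at $0$, and $h_\varepsilon(t,0)$ contributes with the prefactor $\sqrt\varepsilon\cdot O(\sqrt\varepsilon)$.

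Consider the real map
\[
 G_\varepsilon(t,t')=\gamma_{j,\varepsilon}(t)-\gamma_{j',\varepsilon}(t')
\]
from a small rectangle about $(t_j,t_{j'})$ to $\C\cong\R^2$. At $\varepsilon=0$ it equals $\theta_{b_j}(t)-\theta_{b_{j'}}(t')$. This limit is affine, vanishes at $(t_j,t_{j'})$, and has real Jacobian determinant $-\det_{\R}(\theta_{b_j}',\theta_{b_{j'}}')\ne0$. Since $\|G_\varepsilon-G_0\|_{C^1}=O(\varepsilon)$, the inverse function theorem (or a quantitative Newton/contraction argument) gives a unique zero $(t_{j,\varepsilon},t_{j',\varepsilon})$ in the rectangle. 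This zero lies within $O(\varepsilon)$ of $(t_j,t_{j'})$. Setting $z_\varepsilon=\gamma_{j,\varepsilon}(t_{j,\varepsilon})$ then gives $z_\varepsilon=z_*+O(\varepsilon)$. Transversality of the intersection follows from the same $C^1$ estimate: the tangent vectors are $O(\varepsilon)$-perturbations of $\theta_{b_j}'(t_j)$ and $\theta_{b_{j'}}'(t_{j'})$, so their real determinant stays nonzero.

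For the interior assertion, the strip-transversality hypothesis \eqref{eq:small-atom-transversality} holds at $t_j$. By continuity it also holds on a compact subinterval $J$ around $t_j$, and we apply Theorem~\ref{thm:small-atom-strip} on that $J$. The theorem then places the center curve $\gamma_{j,\varepsilon}(J)$ in $\operatorname{int}S_{\eta_\varepsilon}$ for small $\varepsilon$. Since $t_{j,\varepsilon}\in J$ eventually, $z_\varepsilon$ is an interior point. The condition at $t_{j'}$ is not needed for this step, but it gives the symmetric conclusion.

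The step I expect to be the main obstacle is persistence under coefficient perturbations. Here the $\varepsilon$-parameter is fixed and we vary the polynomial. For this I would invoke the last assertion of Theorem~\ref{thm:small-atom-strip}: the two-simple-saddle selected maximum, and hence the tie curves, depend continuously in $C^1$ on the coefficients, through the implicit-function construction of the tie. The center curves therefore vary $C^1$-continuously, and the nondegenerate zero of $G$ persists by the same inverse function argument, together with its transversality. The interior neighborhoods persist by the rank-two statement of that theorem. The subtle point is to make sure the center curve, defined by $x=0$ in the scaled coordinates, remains a well-defined $C^1$ family under perturbation. I would instead use any $C^1$ curve inside the perturbed strip image, for example $X_\varepsilon(t,0)$ built from the perturbed tie parametrization, since the construction only needs one such curve per strip.
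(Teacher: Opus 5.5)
Your proposal is correct and follows essentially the same route as the paper. Both apply the implicit/inverse function theorem to $(t,t')\mapsto\gamma_{b_j,\varepsilon}(t)-\gamma_{b_{j'},\varepsilon}(t')$ using the $O(\varepsilon)$ $C^1$ estimate \eqref{eq:small-atom-center}, get interior membership from the rank-two conclusion of Theorem~\ref{thm:small-atom-strip}, and get coefficient persistence from persistence of the invertibility and the two-saddle charts; your shrinking of $J$ so that \eqref{eq:small-atom-transversality} holds on all of it is a harmless extra precaution.
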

\begin{proof}
Consider the real map
\[
 (t,t')\longmapsto
 \gamma_{b_j,\varepsilon}(t)-\gamma_{b_{j'},\varepsilon}(t').
\]
At $\varepsilon=0$ its differential at $(t_j,t_{j'})$ has columns
$\theta_{b_j}'(t_j)$ and $-\theta_{b_{j'}}'(t_{j'})$ and is invertible.
The center curves depend real-analytically on $\sqrt\varepsilon$ and
are $O(\varepsilon)$-close to their limiting paths in $C^1$, by
\eqref{eq:small-atom-center}. The implicit-function theorem gives
the intersection and the stated estimates. The rank-two conclusion
of Theorem~\ref{thm:small-atom-strip} puts it in the planar interior.
Invertibility and the selected two-saddle charts persist in the
coefficients, proving the last assertion.
\end{proof}

\begin{remark}[Smooth paths in the hole examples]
For the polynomial family \eqref{eq:hole-polynomial-family}, take
$\varepsilon=(2N+k)^{-1}$ and $\omega_j=1$ in
\eqref{eq:small-atom-law}. Its positive-mass law is exactly $\eta_N$.
The two strips in Figure~\ref{fig:hole} are swept out by the
corresponding arcs, for $\varepsilon=1/2002$.
On compact ratio intervals preceding the real landings, the paths of
Lemma~\ref{lem:spike-tracking} can therefore be chosen to be the
center curves of Theorem~\ref{thm:small-atom-strip}, and continued to
the centroid by Lemma~\ref{lem:interval-selection}. Their compact
center-curve portions lie in the planar interior, and
Corollary~\ref{cor:small-atom-crossing} makes the pairwise crossings
transverse interior crossings. In the four-location example, one may
start at $t=1/2$ and use such portions until after the crossing times
in \eqref{eq:explicit-hole-crossing}.

This strengthens the local geometry but is not needed to close the
loops. A local crossing alone does not produce a hole: the continuation
to the common centroid and exclusion of the face points are the
separate steps established in the preceding section.
\end{remark}
\section{Sharp topology bounds}\label{sec:hole-questions}
\begin{problem}[Minimal degree and sharp hole bounds]\label{prob:hole-bounds}
Determine the smallest degree admitting a limiting core with a hole,
and the largest possible number of holes for a prescribed degree or
number of distinct roots. Describe which changes in the support family
and in the selected transport points create or destroy a hole.
\end{problem}
Theorem~\ref{thm:noncontractible} establishes that a hole can occur. Theorem~\ref{thm:many-holes} gives
at least $\binom{s-2}{2}$ holes using $s$ distinct root locations and
sufficiently large multiplicities at two of them. These lower bounds
complement the finite root-number topology bounds of
Theorem~\ref{thm:atomic-complexity}. The challenge is to make the upper bounds effective and to determine
which of the lower bounds are sharp.

\end{document}